\DeclareMathOperator*{\Tr}{\mathrm{tr}}
\DeclareMathOperator{\tr}{\mathrm{tr}}
\newcommand{\R}{\mathbb{R}}
\newcommand{\T}{\mathbb{T}}
\newcommand{\Z}{\mathbb{Z}}
\newcommand{\1}{\mathbbm{1}}
\newcommand{\N}{\mathbb{N}}
\newcommand{\ep}{\varepsilon}
\newcommand{\tensor}{\otimes}
\renewcommand{\S}{\mathbb{S}\,}
\newcommand{\E}{\mathbf{E}}
\renewcommand{\P}{\mathbf{P}}
\newcommand{\leqc}{\lesssim}
\newtheorem{theorem}{Theorem}[section]
\newtheorem{proposition}[theorem]{Proposition}
\newtheorem{lemma}[theorem]{Lemma}
\newtheorem*{lemma*}{Lemma}
\newtheorem{assumption}{Assumption}
\theoremstyle{definition}
\newtheorem{definition}[theorem]{Definition}
\newtheorem{remark}[theorem]{Remark}
\newcommand{\MMMsw}[1]{ [\Meas_{t,x,v}]_{*} }
\newcommand{\dee}{\mathrm{d}}
\newcommand{\ds}{\dee s}
\newcommand{\dx}{\dee x}
\newcommand{\dy}{\dee y}
\newcommand{\dz}{\dee z}
\newcommand{\dr}{\dee r}
\newcommand{\dn}{\dee n}
\newcommand{\Grad}{\nabla}
\newcommand{\norm}[1]{\left\| #1 \right\|}
\newcommand{\damp}{\alpha}
\newcommand{\da}{G} % some macro for the damping term for the velocity, change as you like
\newcommand{\Naturals}{\mathbb{N}}
\renewcommand{\norm}[1]{\|#1\|}
\newcommand{\abs}[1]{\left\vert#1\right\vert}
\newcommand{\set}[1]{\left\{#1\right\}}
\newcommand{\grad}{\nabla}
\newcommand{\Real}{\mathbb{R}}
\newcommand{\curl}{\mathrm{curl}\,}
\def\eps{\varepsilon}
\def\e{{\rm e}}
\def\dd{{\rm d}}
\def\EE{\mathbf{E}}
\def\TT{{\mathbb{T}}}
\numberwithin{equation}{section}
\begin{document}
\title[Sufficient conditions for dual cascade flux laws in 2d SNSE]{Sufficient conditions for dual cascade flux laws in the stochastic 2d Navier-Stokes equations}

\author[J. Bedrossian, M. Coti Zelati, S. Punshon-Smith, F. Weber]{Jacob Bedrossian, Michele Coti Zelati, Sam Punshon-Smith, Franziska Weber}

\address{Department of Mathematics, University of Maryland, College Park, MD 20742, USA}
\email{jacob@math.umd.edu}

\address{Department of Mathematics, Imperial College London, London, SW7 2AZ, UK}
\email{m.coti-zelati@imperial.ac.uk}

\address{Division of Applied Mathematics, Brown University, Providence, RI 02906, USA}
\email{punshs@brown.edu}

\address{Department of Mathematical Sciences, Carnegie Mellon University, Pittsburgh, PA 15213, USA}
\email{franzisw@andrew.cmu.edu}

\subjclass[2010]{35Q30, 60H30, 76F05}

\keywords{Batchelor-Kraichnan theory, 2d turbulence, stochastic Navier-Stokes equations}

\maketitle

\begin{abstract}
We provide sufficient conditions for mathematically rigorous proofs of the third order universal laws capturing the energy flux to large scales and enstrophy flux to small scales for statistically stationary, forced-dissipated 2d Navier-Stokes equations in the large-box limit. 
These laws should be regarded as 2d turbulence analogues of the $4/5$ law in 3d turbulence, predicting a constant flux of energy and enstrophy (respectively) through the two inertial ranges in the dual cascade of 2d turbulence.
Conditions implying only one of the two cascades are also obtained, 
as well as compactness criteria which show that the provided sufficient conditions are not far from being necessary.
The specific goal of the work is to provide the weakest characterizations of the ``0-th laws'' of 2d turbulence in order to make mathematically rigorous predictions consistent with experimental evidence. 
\end{abstract}

\setcounter{tocdepth}{1}
{\small\tableofcontents}

\section{Introduction}
In this paper, we provide sufficient conditions for the mathematically rigorous derivation of the third order universal laws for the statistics of stationary, forced-dissipated, 
two-dimensional turbulence. Two-dimensional turbulence is relevant in, for example, large scale atmospheric and oceanic dynamics where it provides a building-block for the more realistic models (see discussions in \cite{BE12} and the references therein).
The simplest mathematical setting  is via statistically stationary solutions of the incompressible Navier-Stokes equations subject to stochastic forcing and large-scale damping, written in velocity form as
\begin{equation}\label{eq:NSE}%\tag{\textsc{NSE}}
\begin{cases}
\partial_t u + (u\cdot\nabla) u + \nabla p = \nu \Delta u - \alpha (-\Delta)^{-2\gamma}u+ \partial_t W^\lambda, \\
 \nabla\cdot u = 0.
\end{cases}
\end{equation}
Here, the equations are posed on a periodic box $\T^2_\lambda=[0,\lambda)^2$ of size $\lambda>0$; the parameter $\nu>0$ plays the role of the inverse Reynolds number,
while $\alpha>0$ measures the strength of the generalized linear Ekman-type damping $(-\Delta)^{-2\gamma}u$, with $\gamma\geq 0$. 
Written for the vorticity $\omega= \mathrm{curl}\,u := -\partial_yu_1+\partial_x u_2$, equations \eqref{eq:NSE} read
\begin{equation}\label{eq:vNSE}%\tag{\textsc{NSE}}
\begin{cases}
\partial_t \omega + u\cdot\nabla \omega  = \nu \Delta \omega - \alpha (-\Delta)^{-2\gamma}\omega+ \mathrm{curl}\,\partial_t W^\lambda, \\
 u = \nabla^\perp (-\Delta)^{-1} \omega := \begin{pmatrix} -\partial_y \\ \partial_x \end{pmatrix} \Delta^{-1} \omega. 
\end{cases}
\end{equation}
We will assume that the noise $W^\lambda(t)$ is given by
\[
	W^\lambda(t,x) = \sum_{j \in \Naturals} g^\lambda_j(x) W_j(t) 
\]
where $\{g^\lambda_j(x)\}$ is a sequence of smooth, mean zero divergence free vector fields on $\T_\lambda^2$ and $\set{W_j(t)}$ are a family of independent one-dimensional
Wiener processes on a common, canonical filtered probability space denote here as $(\Omega,\mathcal{F}, (\mathcal{F}_t),\P)$.
Define respectively what will be the average energy input per unit time per unit area (we will assume these quantities are independent of $\lambda$ for simplicity): 
\begin{align}\label{eq:eps}
\eps := \frac12\sum_{j}\fint_{\T_
\lambda^2}|g^\lambda _j(x)|^2\dx < \infty, 
\end{align}
and the average enstrophy per unit time per unit area: 
\begin{align}\label{eq:eta}
\eta := \frac12\sum_j\fint_{T_{\lambda}^2}|\mathrm{curl}\,g^\lambda_j(x)|^2\dx <\infty.
\end{align}
Indeed, one of the advantages of the white-in-time forcing is that the average energy and enstrophy input per unit time are independent of the solution. 
% For each $k\in \Z_{\lambda}^2 = \Z^2/\lambda$, let $\hat{g}^\lambda_j(k)$ be the Fourier transform of $g^\lambda_j(x)$ and define the sequence of measures $\{\hat{K}_\lambda(\dee\xi)\}_{\lambda >0}$ on $\R^2$
% \[
% 	\hat{K}_\lambda = \sum_{k\in Z_{\lambda}^2}\sum_{j}|\hat{g}_j^\lambda(k)|^2\delta_k,
% \]
% where $\delta_k$ denotes the delta measure on $\R^2$ centered at point $k \in \Z_\lambda^2 \subset \R^2$. Note that by definition
% \[
% 	\hat{K}_\lambda(\R^2) = \ep.
% \]

In this work we will concentrate on statistically stationary solutions (see Section \ref{sec:def} for rigorous definitions), which are expected to be the easiest setting in which to study turbulence. 
Let $u$ be a (statistically) stationary solution to the Navier-Stokes equations. Using \eqref{eq:NSE}, a simple application of It\^o's formula together with stationarity implies the balance
\begin{align}\label{eq:ENERbal}
 \nu\EE \norm{\grad u}^2_\lambda +   \alpha \EE \norm{(-\Delta)^{-\gamma}u}^2_\lambda =  \eps, 
\end{align}
where we are denoting $\norm{f}_\lambda := \left(\fint_{\mathbb T_\lambda^2} \abs{f(x)}^2 \dx\right)^{1/2}$. 
In the same way, from \eqref{eq:vNSE} it follows that
\begin{align}\label{eq:ENSTRObal}
\nu\EE \norm{\grad \omega}^2_\lambda +   \alpha\EE \norm{(-\Delta)^{-\gamma}\omega}^2_\lambda =  \eta. 
\end{align}

\subsection{Universal 2d turbulence laws}

% Two-dimensional turbulence plays a similar building-block role for the dynamics of plasmas transverse to a strong background magnetic field in certain regimes \red{[cite?]}

Modern understanding of 2d turbulence began in the foundational works \cite{Fjortoft53,Kraichnan67,Leith1968,Batchelor1969}, which first identified the characteristic \emph{dual cascade} picture.
Classically, this requires the injection of turbulent fluctuations via a forcing term, in order to produce a statistically stationary
state. %The removal of these fluctuations happens in two ways: at scales bigger than the injection scale, it is due to friction, while
%at scales smaller than the injection scale, it is caused by viscosity.
In fully developed 2d turbulence, \emph{two} inertial ranges are expected. Energy is expected to transfer from the injection scale to larger scales where it is damped by friction (an \emph{inverse cascade}) whereas the enstrophy is expected to be transferred from the injection scale down to smaller scales where it is dissipated via viscosity (a \emph{direct cascade}).
See \cite{Bernard00,Eyink96,Kupiainen2010,XB18} and the surveys \cite{MontKraichnan1980,BE12} for more discussions.
Such dual cascades with two inertial ranges are now understood to generally occur in turbulent systems with more than one positive conservation law in the inviscid limit; see discussions of direct and inverse cascades in various plasma and nonlinear wave systems in e.g. \cite{Biskamp2003,Nazarenko11,ZLF12}. 

In 3d turbulence, energy is observed to undergo a direct cascade from larger to smaller scales. The constant flux of energy through the inertial range leads to the celebrated Kolmogorov 4/5 law derived in the original K41 works \cite{K41a,K41b,K41c}; see \cite{Frisch1995} for an in-depth discussion.    
In 2d turbulence, constant energy and enstrophy fluxes in the respective inertial ranges imply exact relations for the third-order structure functions of the velocity, apparently derived more or less simultaneously by Bernard, Lindborg, and Yakhot \cite{Bernard99, Lindborg99, Yakhot99}. See also the more recent and detailed discussions in \cite{CP17,XB18}.  
The inverse cascade is expected over a range of scales $\ell_I \ll \abs{h} \ll \ell_\alpha$ (injection scale and friction scale respectively) and over this range we expect: 
\begin{align}
\EE \left(\delta_{h} u \cdot \frac{h}{\abs{h}}\right)^3 \sim \frac{3}{2} \eps |h|, \label{eq:BLYinv}
\end{align} 
where $\delta_h u(x):= u(x+h)-u(x)$ is the increment by the vector $h\in \R^2$. The quantity appearing on the left-hand side above is referred to as the third-order longitudinal structure function, and it is related to the energy flux through scale $\ell$ (see \cite{Frisch1995} for more discussion on this).
The positivity of the right-hand side is a sign that the energy flux is from smaller to larger scales. 
In the range of scales associated with the direct cascade we expect the following to hold over a range of scales $\ell_\nu \ll \abs{h} \ll \ell_I$ (viscous scale and injection scale respectively):
\begin{align} 
\EE \left(\delta_{h} u \cdot \frac{h}{\abs{h}}\right)^3 \sim \frac{1}{8} \eta |h|^3, \label{eq:BLYdirect}
\end{align} 
which is indicative of a direct cascade of enstrophy (see e.g. \cite{CP17}). As predicted by Eyink \cite{Eyink96}, one also expects Yaglom's law \cite{Yaglom49} for the vorticity in the inertial range $\ell_\nu \ll \abs{h} \ll \ell_I$ (Yaglom originally derived this prediction for passive scalar turbulence), 
\begin{align}
\EE \left( \abs{\delta_h \omega}^2 \delta_h u \cdot \frac{h}{\abs{h}}\right) \sim -2 \eta \abs{h}. \label{eq:YagIntro}
\end{align}
Over the inertial ranges, Batchelor-Kraichnan theory also predicts the power spectra, i.e. the ensemble-averaged distribution of energy density in frequency: 
\begin{subequations} 
\begin{align}
  \abs{k} \EE \abs{\hat{u}(k)}^2 & \approx \eps^{2/3}\abs{k}^{-5/3}, \quad\ \quad \ell_\alpha^{-1} \ll \abs{k} \ll \ell_I^{-1}, \label{ineq:BKspec1} \\
  \abs{k} \EE \abs{\hat{u}(k)}^2 & \approx \eta^{2/3}\abs{k}^{-3}, \quad\qquad \ell_I^{-1} \ll \abs{k} \ll \ell_\nu^{-1}. \label{ineq:BKspec2}
\end{align}
\end{subequations} 
Note that the $-3$ prediction for the velocity in the direct cascade inertial range is formally equivalent to a $-1$ spectrum on the enstrophy. 
It seems that rigorously deriving statements on power spectra such as \eqref{ineq:BKspec1} or \eqref{ineq:BKspec2} are likely to be significantly more difficult than providing rigorous proofs of flux laws such as \eqref{eq:YagIntro}. 
\begin{remark}
Notice that \eqref{ineq:BKspec2} specifically implies that the total amount of enstrophy in the inertial range diverges logarithmically in $\nu$ as $\nu \to 0$.
\end{remark}

Despite the variety of challenges in making accurate measurements, there has been many experiments, observations, and numerical simulations to test the ideas of 2d turbulence theory. 
For experiments and observations, see the surveys~\cite{BE12,Kellay2002,CerbusPhD2015}. One approach is to compare to atmospheric data~\cite{Lindborg99,Charney1971}, the second approach, convenient for the laboratory setting, is gravity driven soap film channels~\cite{Couder1989,Gharib1989,Rutgers2001,Kellay2017,Cerbus2013,Rivera2014,Rutgers1998}.
A third laboratory approach has been electromagnetically driven flows of thin, stably-stratified layers~\cite{Cardoso1994,Paret1998,Paret1997,Hansen1998,Paret1999}. %, also suitable for the laboratory setting.
%In this experimental setup, a cell is filled with a lighter and a heavier NaCl solution. Permanent magnets are placed under the cell and an electric current is driven through the cell from one side to the other. The interaction of the current and the magnetic field produce forces that drive the flow~\cite{Paret1998}.
The direct enstrophy cascade has been observed in~\cite{Paret1999,Kellay2017,Rivera2014,Cardoso1994,Rutgers1998} and the inverse cascade was studied e.g. in~\cite{Sommeria1986,Paret1998,Kellay2017,Paret1997,Rutgers1998,Bruneau2005,Chen2006}. Various structure functions were reported in~\cite{Kellay2002,Vorobieff1999,Cerbus2013,RiveraPhD2001,Rivera2003,CerbusPhD2015,Tabeling2002,Bruneau2005}. Numerical simulations to reproduce energy and enstrophy cascades and structure functions were performed in~\cite{Lilly1969,Frisch1984,Lindborg2000,Boffetta2007,Boffetta2010,Bruneau2005,Chen2006,Xiao2009}. Broadly speaking, especially in the more recent experiments, the data is in agreement with the predictions of 2d turbulence theory but subtleties certainly remain -- see the discussions in e.g.~\cite{BE12} for a detailed account.
It is worth pointing out that Gaussian white-in-time stochastic forcing as in \eqref{eq:NSE}--\eqref{eq:vNSE} is standard in numerical experiments.
As is the practice of using hyperviscosity $\nu\Delta \mapsto -\nu (-\Delta)^{M}$ for some $M > 1$ and using ``hypofriction'' $-\alpha  \mapsto \alpha (-\Delta)^{-2\gamma}$.
This is often done to expand the size of the inertial range; it is well-understood that these changes do not significantly change inertial range statistics except close to $\ell_\nu$ and $\ell_\alpha$ where obviously the precise form of the dissipation plays an important role.  See the discussion in \cite{BE12} and the references therein.

\subsection{Stationary solutions and weak anomalous dissipation}
Despite the fundamental importance of statistical theories of turbulence in physics and engineering, few mathematically rigorous works put the ideas on firm theoretical foundations.
%To our knowledge, the primary exceptions to this is the resolution of Onsager's conjecture in 3d (see \cite{Isett18} and the references therein \red{[Should provide more citations]}) and the proof of Yaglom's law in (Batchelor regime) passive scalar turbulence \cite{BBPS18} \red{[not sure how to discuss this sort of thing, and whether to do so here or later -- other mathematical works that need discussion: the other conditional works in 2d and 3d e.g. Theo and Eyink, Alexey and Roman, Hairer/Romito et al; other convex integration works, especially those in 2d; Flandoli/Maslowski and Hairer/Mattingly ergodicity results, Kuksin-Shirikiyan (both the book in general for well-posedness theory etc...should we mention the Kuksin measures, apparently also called `flucuation-dissipation measures', or do we not want to rub in the fact that these measures are definitely not related turbulence and just assume readers will figure that out on their own? Are there any other works?]} \red{[How do we write this paragraph without coming off as douchey?]}
Statistical laws describing constant flux of (inviscid) conserved quantities such as \eqref{eq:BLYinv}, \eqref{eq:BLYdirect}, and \eqref{eq:YagIntro}, are expected to be the easiest laws to verify mathematically rigorously.
Several works have previously appeared focusing on finding sufficient conditions to deduce variants of these laws in deterministic settings, mostly for 3d Navier-Stokes and Euler; see e.g. \cite{Nie1999,Duchon2000,Eyink2003,Drivas18}. Several works have studied sufficient conditions to obtain some estimates on the power spectrum in 3d, for example in the deterministic case \cite{CSint14} and the stochastic case \cite{FlandoliEtAl08}. Other works have been seeking a priori estimates in the high Reynolds number limit to provide some constraints on the possibilities; see e.g. \cite{CR,DFJ10,CTV14,BJMT14} and the references therein. The work on Onsager's conjecture in 3d can be seen as another kind of consistency check (see e.g. \cite{Eyink94,CET,Isett16} and the references therein).

In our previous work \cite{BCZPSW18}, we derived the weakest known sufficient condition to deduce the Kolmogorov 4/5 law for statistically stationary martingale solutions of the 3d Navier-Stokes equations. The condition is simply $\lim_{\nu \to 0}\nu \EE\norm{u}_{L^2}^2 = 0$, which we refer to as \emph{weak anomalous dissipation}.
As remarked in \cite{BCZPSW18}, this is equivalent to the assertion that the Taylor microscale decreases to zero as $\nu \to 0$. 
An analogous condition has been proven for passive scalars advected by a weakly mixing flow  \cite{BCZGH}, and
 was used to provide a complete proof of Yaglom's law \cite{Yaglom49} for (Batchelor-regime) passive scalar turbulence in \cite{BBPS18} (essentially \eqref{eq:YagIntro} but with $\omega$ replaced by a passively advected scalar in a vanishing diffusivity limit).
The work of \cite{BBPS18} appears to be the first proof of any scaling law from the classical statistical theory of turbulence. 
That such a weak condition is sufficient was crucial in \cite{BBPS18}; it seems that even for the much simpler case of (Batchelor-regime) passive scalar turbulence, getting a more quantitative understanding of the direct cascade is significantly more difficult. 

Relatively few works have considered scaling laws in the 2d case as it is notably more subtle than the 3d case. The exception we are aware of is \cite{Drivas18}, where a Lagrangian analogue of \eqref{eq:BLYinv} is derived in the deterministic case using relatively strong assumptions (but which are supported by experiments).
Perhaps the work closest to ours in spirit is that of \cite{XB18}; though not phrased in a mathematically rigorous manner, the arguments therein could be made rigorous by taking sufficiently strong hypotheses.  

In this work, we want to find the weakest possible conditions in order to provide mathematically rigorous statements of \eqref{eq:BLYinv}, \eqref{eq:BLYdirect}, and \eqref{eq:YagIntro}, in the hopes that they will eventuallly help lead to a complete proof (as \cite{BCZPSW18} did for \eqref{eq:YagIntro} for (Batchelor-regime) passive scalar turbulence \cite{BBPS18}). 
Motivated by our previous works \cite{BCZPSW18,BBPS18}, and discussions of 2d turbulence in \cite{Eyink96,Kupiainen2010}, in order to deduce a dual cascade (i.e. with both inertial ranges) in the large box limit, we use the following assumption. 
\begin{definition}[Weak anomalous dissipation] \label{def:WAD}
We say that a sequence $\set{u}_{\nu,\alpha > 0}$ of stationary solutions to \eqref{eq:NSE}-\eqref{eq:vNSE} satisfies \emph{weak anomalous dissipation} if
\begin{subequations}\label{eq:WAD}
\begin{align}
&\lim_{\nu \to 0}\sup_{\alpha \in (0,1)}  \nu\EE \norm{\omega}^2_\lambda = 0, \label{eq:WADa}\\
&\lim_{\alpha \to 0}\sup_{\nu \in (0,1)}  \alpha\EE \norm{(-\Delta)^{-\gamma}\omega}_\lambda^2 = 0. \label{eq:WADb}
\end{align}
\end{subequations}
\end{definition}

\begin{remark}
If one chooses the parameters $\nu,\alpha$ to be linked somehow, e.g. $\nu \approx \alpha$, then naturally the inner suprema can be dropped; see Remark \ref{rmk:nualphlink}.  
\end{remark}

% \begin{remark}
% From \eqref{
% \begin{align}
% \sup_{\nu \in (0,1)} \alpha \EE \norm{(-\Delta)^{-\gamma} \omega}^2_\lambda & \leq \eta \\
% \sup_{\alpha \in (0,1)}  \nu \EE \norm{\omega}_{\lambda}^2 & \leq \eps. 
% \end{align}
% However, we are unaware of any other a priori estimates aside from those that follow from these two assumptions combined with parabolic regularity theory. 
% \end{remark}

Equivalently, the above conditions can be stated in terms of the energy/enstrophy balance, which makes Definition \ref{def:WAD} look a little more like classical anomalous dissipation assumptions, in contrast to that used in \cite{BCZPSW18}. 

\begin{proposition}
Let $\set{u}_{\nu,\alpha > 0}$ be a sequence of stationary solutions to \eqref{eq:NSE}-\eqref{eq:vNSE}. Then \eqref{eq:WAD} holds if and only if
\begin{subequations}\label{eq:WAD2}
	\begin{align}
	& \lim_{\nu \to 0} \sup_{\alpha \in (0,1)}\abs{\eps -   \damp\EE \norm{(-\Delta)^{-\gamma}u}_\lambda^2} = 0,\label{eq:WAD2a}\\ 
	& \lim_{\alpha \to 0} \sup_{\nu \in (0,1)} \abs{\nu\EE \norm{\grad \omega}^2_\lambda - \eta} = 0. \label{eq:WAD2b}
	\end{align}
	\end{subequations}
\end{proposition}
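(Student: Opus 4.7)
The plan is to use the balance identities \eqref{eq:ENERbal} and \eqref{eq:ENSTRObal} to rewrite each quantity appearing in \eqref{eq:WAD} as the absolute value of the corresponding difference appearing in \eqref{eq:WAD2}; the equivalence of the two conditions then follows by taking $\sup_{\alpha}$ (resp.\ $\sup_\nu$) and the appropriate vanishing-viscosity (resp.\ vanishing-damping) limit on both sides.

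First I would address the pair \eqref{eq:WADa} $\iff$ \eqref{eq:WAD2a}. The small bridge needed is the planar identity $\norm{\grad u}_\lambda^2 = \norm{\omega}_\lambda^2$, valid for mean-zero, divergence-free vector fields on $\T_\lambda^2$. This is immediate from Plancherel: for divergence-free $u$, one has $\hat{u}(k) \perp k$, so on each Fourier mode $|k|\,|\hat{u}(k)| = |\hat{\omega}(k)|$ up to the fixed $2\pi/\lambda$ normalization, and summing gives the identity. Using it, \eqref{eq:ENERbal} becomes
$$\nu\, \EE \norm{\omega}_\lambda^2 \;=\; \eps - \damp\, \EE \norm{(-\Delta)^{-\gamma} u}_\lambda^2.$$
Since the left-hand side is non-negative, so is the right-hand side, and therefore
$$\nu\, \EE \norm{\omega}_\lambda^2 \;=\; \bigl|\eps - \damp\, \EE \norm{(-\Delta)^{-\gamma} u}_\lambda^2\bigr|$$
as a pointwise identity in $(\nu,\damp)$. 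Applying $\sup_{\damp \in (0,1)}$ and then $\lim_{\nu \to 0}$ to both sides yields \eqref{eq:WADa} $\iff$ \eqref{eq:WAD2a}.

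The pair \eqref{eq:WADb} $\iff$ \eqref{eq:WAD2b} is even more direct, since no Fourier-side identity is needed: \eqref{eq:ENSTRObal} already rearranges as
$$\damp\, \EE \norm{(-\Delta)^{-\gamma} \omega}_\lambda^2 \;=\; \eta - \nu\, \EE \norm{\grad \omega}_\lambda^2 \;=\; \bigl|\eta - \nu\, \EE \norm{\grad \omega}_\lambda^2\bigr|,$$
again by non-negativity of the left-hand side. Taking $\sup_{\nu \in (0,1)}$ followed by $\lim_{\damp \to 0}$ produces the equivalence.

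There is no real obstacle: once the two balance laws \eqref{eq:ENERbal}--\eqref{eq:ENSTRObal} are in hand, the whole argument reduces to one curl/gradient identity on $\T_\lambda^2$ and the observation that each of the four quantities involved in \eqref{eq:WAD} and \eqref{eq:WAD2} is visibly non-negative, so the absolute values can be inserted for free.
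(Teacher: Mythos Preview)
Your proof is correct and follows essentially the same approach as the paper: rearrange the energy and enstrophy balances \eqref{eq:ENERbal}--\eqref{eq:ENSTRObal} and use that $\norm{\grad u}_\lambda$ agrees with $\norm{\omega}_\lambda$ to pass between the two formulations. Your argument is slightly more detailed (you justify the exact equality $\norm{\grad u}_\lambda^2 = \norm{\omega}_\lambda^2$ via Plancherel and make explicit the non-negativity step that licenses the absolute values), but the underlying idea is identical.
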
 
\begin{proof}
Rearranging the energy balance \eqref{eq:ENERbal} gives
	\begin{align}
	\abs{\eps - \damp\EE \norm{(-\Delta)^{-\gamma}u}^2_\lambda} =  \nu\EE \norm{\grad u}^2_\lambda. 
	\end{align}
Since $ \norm{\grad u}_\lambda$ is comparable to $\norm{\omega}_\lambda$, the equivalence between \eqref{eq:WADa} and \eqref{eq:WAD2a} 
is apparent. The second statement follows in the same way, using \eqref{eq:ENSTRObal}. 
\end{proof}
\begin{remark}
  The physical content of \eqref{eq:WAD2} is clear: as $\nu,\alpha \to 0$ \emph{all} of the energy is being dissipated by the large-scale damping and \emph{all} of the enstrophy is being dissipated by viscosity.
\end{remark}

In addition to conditions \eqref{eq:eps} and $\eqref{eq:eta}$ that the net energy and enstrophy input is finite we will also take the following, uniform in $\lambda$, regularity conditions on the noise:
\begin{assumption} \label{cond:g}
We will assume that $\eps$ and $\eta$ are \emph{independent} of $\lambda \geq 1$ and that
\begin{align}
\sup_{\lambda\in(1,\infty)}\sum_{j \in \Naturals} \norm{\grad^3 g^\lambda_j}_{\lambda}^2 & \lesssim 1,\\
\lim_{\delta \to 0} \sup_{\lambda \in (1,\infty)} \sum_{j \in \Naturals} \norm{(g^\lambda_j)_{\leq \delta}}_{\lambda}^2 & = 0,
\end{align}
where $f_{\leq \delta}$ denotes the restriction to frequencies less than $\delta$ (see Section \ref{sec:Note} for Fourier analysis conventions). 
\end{assumption}

\begin{remark}
The above conditions on the $\{g_j^\lambda\}$ ensure that the power spectrum of the noise is not too singular at low and high frequencies uniformly in $\lambda$. This will be important in showing that certain correlation functions of the noise converge appropriately in the $\nu\to 0$ and $\alpha \to 0$ limits. One can view this is as an assertion that the energy/enstrophy is mostly being injected at $\mathcal{O}(1)$ scales. 
\end{remark}

\subsection{Informal statements of main results}
As the full statements of the theorems can appear a little technical at first, we have opted to make abbreviated statements first that are more along the lines of the statements present in the physics literature. The full statements are made below in the respective sections. 

\subsubsection{The dual cascade}
The full statement of the dual cascade can be found in Sections \ref{sec:Direct} and \ref{sec:Inverse} below (for the direct and inverse cascades, respectively). 
\begin{theorem}[Informal characterization of the dual cascade] \label{thm:InformalDual}
Suppose that $\lambda = \lambda(\alpha) < \infty$ is a continuous monotone increasing function such that $\lim_{\alpha \to 0} \lambda = \infty$.
Let $\set{u}_{\nu,\alpha >0}$ be a sequence of statistically stationary solutions such that Definition \ref{def:WAD} holds. Then,
\begin{itemize}
\item[(i)] There exists a dissipative scale $\ell_\nu \in (0,1)$ satisfying $\lim_{\nu \to 0} \ell_\nu = 0$ such that the following laws hold over a \emph{small-scale} inertial range $(\ell_\nu,1)$ at asymptotically small scales:
\begin{align}
& \EE \fint_{\mathbb S} \fint_{\TT_\lambda^2} \abs{\delta_{\ell n} \omega}^2 \delta_{\ell n} u \cdot n \, \dee x \dee n \sim -2\eta \ell, \label{eq:yag} \\
& \EE \fint_{\mathbb S} \fint_{\TT_\lambda^2}  \abs{\delta_{\ell n} u}^2\left(\delta_{\ell n} u \cdot n\right) \dee x \dee n \sim \frac14\eta \ell^3, \label{eq:lind1}\\
& \EE \fint_{\mathbb S} \fint_{\TT_\lambda^2}   \left(\delta_{\ell n} u \cdot n\right)^3 \dee x \dee n \sim \frac{1}{8}\eta \ell^3. \label{eq:lind2}
\end{align}
\item[(ii)] There exists a damping scale $\ell_\alpha \in (1,\infty)$ satisfying $\lim_{\alpha \to 0} \ell_{\alpha} = \infty$ such that the following laws hold over a \emph{large-scale} inertial range $(1,\ell_\alpha)$ at asymptotically large scales:
\begin{align}
& \EE \fint_{\mathbb S} \fint_{\TT_\lambda^2}  \abs{\delta_{\ell n} u}^2\left(\delta_{\ell n} u \cdot n\right) \, \dee x \dee n \sim  2\eps \ell,\label{eq:inv1} \\
& \EE \fint_{\mathbb S} \fint_{\TT_\lambda^2}   \left(\delta_{\ell n} u \cdot n\right)^3 \dee x \dee n \sim \frac{3}{2}\eps \ell. \label{eq:inv2}
\end{align}
\end{itemize}
\end{theorem}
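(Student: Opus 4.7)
The natural strategy is to derive K\'arm\'an--Howarth--Monin (KHM) type identities in $\R^2$ by applying It\^o's formula to the two-point cross-correlation functions of the stationary solutions, and then to localise them via the divergence theorem on balls of radius $\ell$.

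Concretely, for the direct enstrophy cascade I would apply It\^o to $\EE[\omega(x)\omega(y)]$ via \eqref{eq:vNSE}, and for the inverse energy cascade to $\EE[u(x)\cdot u(y)]$ via \eqref{eq:NSE}. Stationarity kills the $\partial_t$ terms and translation invariance reduces each to a function of $h=y-x$. After rewriting the transport contributions through increments $\delta_h$ and exploiting $\Div u=0$, I expect identities of the schematic form
\begin{align}
 \nabla_h \cdot \EE\bigl[\abs{\delta_h \omega}^2\, \delta_h u\bigr] &= -4\eta\, \chi^\eta_\lambda(h) + \nu\, V^\omega_\lambda(h) + \alpha\, D^\omega_\lambda(h), \\
 \nabla_h \cdot \EE\bigl[\abs{\delta_h u}^2\, \delta_h u\bigr] &= 4\eps\, \chi^\eps_\lambda(h) + \nu\, V^u_\lambda(h) + \alpha\, D^u_\lambda(h),
\end{align}
where $\chi^\eta_\lambda,\chi^\eps_\lambda$ encode the noise covariance (with $\chi(0)=1$), and $V^\bullet,D^\bullet$ gather the viscous and damping contributions respectively. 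Integrating over $B_\ell\subset\R^2$, applying the divergence theorem, and parameterising $\partial B_\ell$ by $h=\ell n$ gives
\begin{equation}
 2\pi\ell\,\EE\fint_{\mathbb S}\fint_{\TT^2_\lambda}\abs{\delta_{\ell n}\omega}^2\, \delta_{\ell n} u \cdot n\,\dx\,\dn = -4\eta\int_{B_\ell}\chi^\eta_\lambda(h)\,\dee h + \text{(errors)},
\end{equation}
and an analogous identity for the velocity third-order structure function on the inverse side. Assumption \ref{cond:g} forces $\chi^\bullet_\lambda\to 1$ uniformly on compact $h$-sets in the appropriate limit, so $\int_{B_\ell}\chi^\bullet_\lambda \sim \pi\ell^2$ on scales well below the injection scale, producing the prefactors $-2\eta\ell$ in \eqref{eq:yag} and $+2\eps\ell$ in \eqref{eq:inv1}.

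The two inertial ranges are then identified by comparing the error budget to the principal term. The viscous contribution over $B_\ell$ is bounded via Cauchy--Schwarz by $\lesssim \ell^2\,\nu\,\EE\norm{\grad\omega}_\lambda^2$, and \eqref{eq:WADa} renders this $o(\ell^2)$ uniformly in $\alpha$; dividing by $2\pi\ell$ produces a $o(\ell)$ correction and defines the small-scale cutoff $\ell_\nu$. The large-scale cut-off $\ell_\alpha$ is fixed analogously via \eqref{eq:WADb}. For the longitudinal laws \eqref{eq:lind2} and \eqref{eq:inv2}, I would extract them from \eqref{eq:lind1} and \eqref{eq:inv1} by appealing to the isotropy emerging in the large-box limit: averaging over $x\in\TT^2_\lambda$ and $n\in\mathbb S$ leaves only the isotropic part of the third-order tensor as $\lambda\to\infty$, and standard algebraic identities for isotropic divergence-free 2D tensors then convert $\EE\fint\abs{\delta u}^2(\delta u\cdot n)$ into $\EE\fint(\delta u\cdot n)^3$ with ratio $1/2$ in the enstrophy range (where $S_{LLL}\sim\ell^3$) and $3/4$ in the energy range (where $S_{LLL}\sim\ell$). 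The Lindborg law \eqref{eq:lind1} itself would be obtained from the vorticity KHM combined with the Biot--Savart relation $u=\nabla^\perp(-\Delta)^{-1}\omega$.

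The main obstacle will be propagating uniform bounds through the joint limit $\alpha\to 0$ with $\lambda=\lambda(\alpha)\to\infty$: the noise correlations $\chi^\bullet_\lambda$ and the damping tensors $D^\bullet_\lambda$ depend nontrivially on $\lambda$, and Assumption \ref{cond:g} must be combined with the $\alpha$-uniform part of \eqref{eq:WADa} (and the dual $\nu$-uniform part of \eqref{eq:WADb}) to keep these errors from overtaking the leading-order prediction. A secondary difficulty is the rigorous justification of the KHM identities for stationary martingale solutions of \eqref{eq:NSE}--\eqref{eq:vNSE}: this requires sufficient \emph{a priori} moment control on $(u,\omega)$ to make It\^o's formula and the spatial cross-term computations honestly integrable, which is a delicate point given only the energy/enstrophy balances \eqref{eq:ENERbal}--\eqref{eq:ENSTRObal}.
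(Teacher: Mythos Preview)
Your KHM-plus-divergence-theorem skeleton is correct and is what the paper does, but several of the key steps are misidentified and would not go through as written.

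\textbf{The viscous error does not vanish with your bound.} You estimate the viscous contribution by $\lesssim \ell^2\,\nu\,\EE\norm{\grad\omega}_\lambda^2$ and then invoke \eqref{eq:WADa}. But \eqref{eq:WADa} concerns $\nu\,\EE\norm{\omega}_\lambda^2$, not $\nu\,\EE\norm{\grad\omega}_\lambda^2$; the latter converges to $\eta>0$ by \eqref{eq:WAD2b}, so your bound is $O(\ell^2)$, the same order as the main term after dividing by $\ell$. The paper instead writes the viscous term as $-4\nu\bar{\mathfrak B}'(\ell)$ and bounds it by $(\nu\EE\norm{\grad\omega}_\lambda^2)^{1/2}(\nu\EE\norm{\omega}_\lambda^2)^{1/2}\lesssim \eta^{1/2}(\nu\EE\norm{\omega}_\lambda^2)^{1/2}$, which does vanish by \eqref{eq:WADa}. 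This splitting is what fixes $\ell_\nu$ via \eqref{def:ellnu}.

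\textbf{The inverse cascade comes from the damping, not the noise.} For \eqref{eq:inv1} you are in the regime $\ell\to\infty$, not $\ell\to 0$, so the expansion $\int_{B_\ell}\chi\sim\pi\ell^2$ is unavailable. In the paper the noise term $\fint_{\abs{y}\leq\ell}\tr a(y)\,\dy$ \emph{vanishes} as $\ell\to\infty$ by an oscillatory-integral (Bessel-function) cancellation lemma using the low-frequency part of Assumption~\ref{cond:g}. The constant $2\eps$ instead comes from the damping term: one writes $\alpha\bar G(r)=\alpha\bar G(0)+\alpha(\bar G(r)-\bar G(0))$, uses the energy balance \eqref{eq:ENERbal} plus \eqref{eq:WADa} to get $\alpha\bar G(0)\to\eps$, and controls the remainder by $\ell(\alpha\EE\norm{(-\Delta)^{-\gamma}\omega}_\lambda^2)^{1/2}$, which vanishes on $[\ell_I,\ell_\alpha]$ by \eqref{eq:WADb} and the choice \eqref{def:ellalpha}.

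\textbf{The direct-cascade velocity law \eqref{eq:lind1} is not a consequence of the vorticity KHM via Biot--Savart.} The paper uses the velocity KHM directly, and the mechanism is a genuine cancellation: Taylor-expanding the noise term gives $\tfrac{2\eps}{\ell^2}-\tfrac{\eta}{4}+o(1)$, while the damping term gives $\tfrac{2\eps}{\ell^2}+o(1)$ (using \eqref{eq:ENERbal} and \eqref{eq:WADb}); the singular $\eps/\ell^2$ pieces cancel and only $\tfrac{\eta}{4}$ survives. Your Biot--Savart route would have to reproduce exactly this cancellation, which is not automatic. Similarly, the longitudinal laws \eqref{eq:lind2} and \eqref{eq:inv2} are not obtained by a purely algebraic isotropy reduction of \eqref{eq:lind1} and \eqref{eq:inv1}; the paper derives a separate spherically-averaged KHM identity for $\bar D_{\parallel}(\ell)$ (Lemma~\ref{lem:khmparallel}) that contains an additional integral of $\bar D$ plus its own noise and damping terms requiring their own expansions. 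The ratios you quote emerge only after carrying all of these through.
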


\begin{remark}
In our proofs we choose $\ell_\nu$ and $\ell_\alpha$ such that 
\begin{align}
\lim_{\nu \to 0}\frac{ \sup_{\alpha \in (0,1)} \nu \EE \norm{\omega}_{\lambda}^2}{\ell_{\nu}^2} & = 0, \\
\lim_{\alpha \to 0} \ell_\alpha^2 \left(\sup_{\nu \in (0,1)} \alpha \EE \norm{(-\Delta)^{-\gamma} \omega}_{\lambda}^2\right) & = 0. 
\end{align}
These choices should be interpreted as \emph{estimates} of the small scale inertial range $(\ell_\nu, 1)$ and large scale inertial range $(1,\ell_\alpha)$.  
The true inertial ranges could be larger.  
\end{remark}

\begin{remark}
Our proof also naturally provides error estimates in $\ell$ and $\nu,\alpha$. For example, for \eqref{eq:yag}, our proofs shows that there is an explicitly computable function $F_{g,\lambda}(\ell)$ which depends only on the noise such that for $\ell \ll 1$
\begin{align}
 \frac{1}{\ell}\EE \fint_{\mathbb S} \fint_{\TT_\lambda^2} \abs{\delta_{\ell n} \omega}^2 \delta_{\ell n} u \cdot n \, \dee x \dee n = F_{g,\lambda}(\ell) + \mathcal{O}\left(\frac{ \nu \EE \norm{\omega}_{\lambda}^2}{\ell^2} \right) + \mathcal{O}\left(\alpha \EE \norm{(-\Delta)^{-\gamma} \omega}_{\lambda}^2\right)
\end{align}
and that $F(\ell) = 2\eta + \mathcal{O}(\ell)$. In principle, one can provide a further expansions for $F$ as $\ell \to 0$; similar expansions are already required for several of our results (see also analogous calculations in \cite{XB18}). 
\end{remark} 

\begin{remark}
Provided we work with spatially homogeneous\footnote{A solution is spatially homogeneous if $u(\cdot,\cdot)$ has the same law as $u(\cdot,\cdot + y)$ for all $y \in \Real^2$. See e.g. \cite{Frisch1995,BCZPSW18,Basson2008} for more discussions statistical symmetries.} solutions, it seems that we could pass to the limit $L \to \infty$ \emph{first} while fixing all the other parameters and study stationary, homogeneous solutions to damped 2d Navier-Stokes on $\Real^2$ (see e.g. \cite{Basson2008,VF12} for more details on how this could be done).  In this case one could also impose statistical isotropy (see the discussions in \cite{DFK06}). 
This has some appeal to it, but it is also mathematically more technical and is not necessary to isolate the inverse cascade here, as one is in any case necessarily constrained to a finite inertial range for all non-zero $\alpha$. 
\end{remark}

\begin{remark}
As in \cite{BCZPSW18}, if one has the suitable statistical symmetries, the averages in $x$ and/or $n$ can be removed. 
\end{remark}

\begin{remark} \label{rmk:nualphlink}
In our results, the order in which $\nu$ and $\alpha$ are taken to 0 does not matter (see Theorem \ref{thm:direct} and \ref{thm:inverse} for precise statements).
This is a consequence of the strength of the anomalous dissipation assumption in Definition \ref{def:WAD}.
In particular, we could also consider the case in which $\nu$ and $\alpha$ are related, in which case  \eqref{eq:WAD} could be relaxed significantly.
For example if one takes $\nu \approx \alpha$ (that is, comparable up to multiplicative constants), then we replace \eqref{eq:WAD} with 
\begin{align}
&\lim_{\nu \to 0}  \nu\EE \norm{\omega}^2_\lambda = 0, \\ 
&\lim_{\alpha \to 0} \alpha\EE \norm{(-\Delta)^{-\gamma}\omega}_\lambda^2 = 0. 
\end{align}
This case could potentially be the most amenable to rigorous mathematical analysis. 
\end{remark}

\subsubsection{Necessary conditions}{}
It is of course natural to ask how close the above conditions are to being \emph{necessary}.
This can be formulated rigorously via pre-compactness or equi-integrability for the dissipation and damping (for the direct and inverse cascades respectively) which rule out any kind of damping/dissipation anomalies. 

\begin{theorem}[Necessary conditions for the dual cascade] \label{thm:Nec}
Fix $\gamma \geq 0$ and let $\{u\}_{\nu,\alpha >0}$ be a sequence of statistically stationary solutions.
\begin{itemize}
\item Suppose that the following precompactness conditions hold:
\begin{subequations}
\begin{align}
&\lim_{|h|\to 0}\sup_{\nu,\alpha \in(0,1)}\nu \E\|\nabla \delta_h\omega\|_\lambda^2 = 0 \label{def:PCdiss} \\ 
&\lim_{|h|\to 0}\sup_{\nu,\alpha \in(0,1)} \alpha\E\|(-\Delta)^{-\gamma}\delta_h \omega\|_{\lambda}^2 = 0. \label{def:PCdamp}
\end{align}
\end{subequations}
Then the scaling laws \eqref{eq:yag}, \eqref{eq:lind1} and \eqref{eq:lind2} for the direct cascade cannot hold. Specifically,
\begin{subequations}
	\begin{align}
	&\lim_{\ell\to 0}\sup_{\nu,\alpha \in (0,1)}\frac{1}{\ell}\left|\E\fint_{\S}\fint_{\TT_\lambda^2} \abs{\delta_{\ell n} \omega}^2 \delta_{\ell n} u \cdot n \, \dee x \dee n\right| = 0,\label{eq:necessary1}\\
	&\lim_{\ell\to 0}\sup_{\nu,\alpha \in (0,1)}\frac{1}{\ell^3}\left|\E\fint_{\S}\fint_{\TT_\lambda^2} \abs{\delta_{\ell n} u\cdot n}^2 \delta_{\ell n} u \cdot n \, \dee x \dee n\right| = 0,\label{eq:necessary2}\\
	&\lim_{\ell\to 0}\sup_{\nu,\alpha \in (0,1)}\frac{1}{\ell^3}\left|\E\fint_{\S}\fint_{\TT_\lambda^2} (\delta_{\ell n} u\cdot n)^3 \dee x \dee n\right| = 0.\label{eq:necessary3}
\end{align}
\end{subequations}
\item Suppose that the following equi-integrability conditions at low frequencies hold:
\begin{subequations}
\begin{align}
&\lim_{\delta \to 0}\sup_{\nu,\alpha \in (0,1)}\nu\E\|\nabla u_{\leq \delta}\|_{\lambda}^2 =0, \label{def:EQdiss} \\
&\lim_{\delta\to 0} \sup_{\nu,\alpha \in (0,1)} \alpha \E\|(-\Delta)^{-\gamma}u_{\leq \delta}\|_{\lambda}^2 = 0, \label{def:EQdamp}
\end{align}
\end{subequations}
then the scaling laws \eqref{eq:inv1} and \eqref{eq:inv2} for the inverse cascade cannot hold. Specifically
\begin{subequations}
	\begin{align}
	&\lim_{\ell\to \infty}\sup_{\nu,\alpha \in (0,1)}\frac{1}{\ell}\left|\E\fint_{\S}\fint_{\TT_\lambda^2} \abs{\delta_{\ell n} u\cdot n}^2 \delta_{\ell n} u \cdot n \, \dee x \dee n\right| = 0,\label{eq:necessary4}\\
	&\lim_{\ell\to \infty}\sup_{\nu,\alpha \in (0,1)}\frac{1}{\ell}\left|\E\fint_{\S}\fint_{\TT_\lambda^2} (\delta_{\ell n} u\cdot n)^3 \dee x \dee n\right| = 0.\label{eq:necessary5}
\end{align}
\end{subequations}
\end{itemize}
\end{theorem}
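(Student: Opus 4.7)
The natural approach is through Kármán--Howarth--Monin (KHM) type identities. Applying It\^o's formula to $\tfrac12\fint|\delta_h\omega|^2\,dx$ and $\tfrac12\fint|\delta_h u|^2\,dx$ and using stationarity, incompressibility (to eliminate the pressure and the ``transport'' contribution $u(x)\cdot\nabla\delta_h(\cdot)$ after one integration by parts), and the algebraic identity $\E\fint\delta_h f\cdot\delta_h[(u\cdot\nabla)f]\,dx=\tfrac12\nabla_h\cdot\E\fint|\delta_h f|^2\delta_h u\,dx$ with $f=\omega$ and $f=u$ successively, I would arrive at
\begin{align*}
\nabla_h\cdot\E\fint|\delta_h\omega|^2\delta_h u\,dx &= -2\nu\E\|\nabla\delta_h\omega\|_\lambda^2 - 2\alpha\E\|(-\Delta)^{-\gamma}\delta_h\omega\|_\lambda^2 + \sum_j\fint|\delta_h\mathrm{curl}\, g_j^\lambda|^2\,dx,\\
\nabla_h\cdot\E\fint|\delta_h u|^2\delta_h u\,dx &= -2\nu\E\|\nabla\delta_h u\|_\lambda^2 - 2\alpha\E\|(-\Delta)^{-\gamma}\delta_h u\|_\lambda^2 + \sum_j\fint|\delta_h g_j^\lambda|^2\,dx.
\end{align*}
Integrating either identity over the ball $B_\ell(0)\subset\RR^2$ and applying the divergence theorem converts the LHS into (a constant multiple of) $\tfrac1\ell\E\fint_\S\fint|\delta_{\ell n}f|^2\delta_{\ell n}u\cdot n\,dx\,dn$, so every necessary-condition assertion of the theorem reduces to a bound on the corresponding ball-average of the RHS that is uniform in $\nu,\alpha\in(0,1)$.

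For the direct cascade, Yaglom's law \eqref{eq:necessary1} is then immediate: as $|h|\to 0$, each RHS term of the $\omega$-identity vanishes uniformly in $\nu,\alpha$, the dissipation and damping terms by the precompactness hypotheses \eqref{def:PCdiss}--\eqref{def:PCdamp}, and the noise term by Assumption \ref{cond:g} (which gives $\sum_j\|\delta_h\mathrm{curl}\, g_j^\lambda\|_\lambda^2\lesssim|h|^2$). The Lindborg laws \eqref{eq:necessary2}--\eqref{eq:necessary3} are more subtle: the $1/\ell^3$ normalisation demands the ball-average of the RHS be $o(\ell^2)$, while a termwise bound gives only $O(\ell^2)$. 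The hidden cancellation is visible in Fourier: writing
\[
\mathrm{RHS}_u(h) = 2\sum_{k\neq 0}|e^{ih\cdot k}-1|^2\bigl(B(k) - A(k)\bigr),\ \ A(k):=\bigl(\nu|k|^2+\alpha|k|^{-4\gamma}\bigr)\E|\hat u(k)|^2,\ \ B(k):=\tfrac12\sum_j|\widehat{g_j^\lambda}(k)|^2,
\]
the enstrophy balance \eqref{eq:ENSTRObal} combined with the definition of $\eta$ yields $\sum_k|k|^2 A(k)=\eta=\sum_k|k|^2 B(k)$. Since $\fint_\S|e^{irn\cdot k}-1|^2\,dn = 2-2J_0(r|k|) = (r|k|)^2\phi(r|k|)$ with $\phi(x):=(2-2J_0(x))/x^2\to\tfrac12$ as $x\to 0$, one finds
\[
\frac{1}{r^2}\fint_\S\mathrm{RHS}_u(rn)\,dn = 2\sum_k|k|^2(B(k)-A(k))\bigl[\phi(r|k|)-\tfrac12\bigr] \xrightarrow[r\to 0]{} 0
\]
by dominated convergence (the dominant being $|k|^2(A(k)+B(k))$, summable to $2\eta$), which propagates to the required $o(\ell^2)$ ball-average. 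Uniformity in $\nu,\alpha$ follows from the fact that \eqref{def:PCdiss}--\eqref{def:PCdamp} are equivalent, via the Kolmogorov--Riesz criterion, to the uniform high-frequency tail estimate $\sup_{\nu,\alpha}\sum_{|k|>K}|k|^2 A(k)\to 0$ as $K\to\infty$, while Assumption \ref{cond:g} provides the analogous tail bound for $B(k)$.

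The inverse-cascade statements \eqref{eq:necessary4}--\eqref{eq:necessary5} follow by the mirror argument: one takes $\ell\to\infty$ in the same Fourier identity, where now $2-2J_0(r|k|)\to 2$ as $r|k|\to\infty$, and uses the \emph{energy} balance $\sum_k A(k)=\eps=\sum_k B(k)$ in place of enstrophy for the cancellation. Uniformity in $\nu,\alpha$ is then ensured by equi-integrability at \emph{low} (rather than high) frequencies, which is precisely \eqref{def:EQdiss}--\eqref{def:EQdamp} combined with the low-frequency decay in Assumption \ref{cond:g}. I expect the main obstacle to be the uniformity of the dominated-convergence steps --- in particular, translating the physical-space precompactness/equi-integrability hypotheses into the equivalent uniform Fourier tail bounds. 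The longitudinal cubic structure functions $(\delta_{\ell n}u\cdot n)^3$ in \eqref{eq:necessary3} and \eqref{eq:necessary5} are not produced directly by the above identities; I would treat them either by running a parallel argument starting from It\^o applied to $\tfrac12\fint(\delta_h u\cdot h/|h|)^2\,dx$ (the angular singularity at $h=0$ being harmless after the $\fint_\S$-average), or by using the decomposition $|\delta u|^2\delta u\cdot n = (\delta u\cdot n)^3 + (\delta u\cdot t)^2(\delta u\cdot n)$ with $t\perp n$ and controlling the transverse--longitudinal cross term via a parallel KHM identity for the rotated velocity field.
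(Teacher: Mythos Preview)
Your approach is essentially the paper's for \eqref{eq:necessary1}, \eqref{eq:necessary2}, and \eqref{eq:necessary4}: the same KHM balances, with your structure-function phrasing equivalent to the paper's correlation-function phrasing. For \eqref{eq:necessary1} your argument is in fact more direct than the paper's (which splits each term as ``value at $h=0$'' plus increment and then invokes the enstrophy balance, whereas you observe that the hypothesis \eqref{def:PCdiss} is literally the statement that the dissipation term vanishes). For \eqref{eq:necessary2} your Fourier/Bessel/dominated-convergence argument is a genuine alternative: the paper instead Taylor-expands $\bar\Gamma'(\ell)$ to fourth order and $\bar G(\ell)$ to second order, extracts $\E\|\nabla u\|_\lambda^2$, $\E\|\nabla\omega\|_\lambda^2$, $\E\|(-\Delta)^{-\gamma}u\|_\lambda^2$, $\E\|(-\Delta)^{-\gamma}\omega\|_\lambda^2$ explicitly, cancels these against the noise expansion via the energy and enstrophy balances \eqref{eq:ENERbal}--\eqref{eq:ENSTRObal}, and controls the remainders by the precompactness hypotheses. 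Your Fourier route encodes this cancellation in one line (the identity $\sum_k|k|^2(B(k)-A(k))=0$) and avoids the fourth-order Taylor step; the paper's route gives explicit error rates. For \eqref{eq:necessary4} the paper uses the large-scale cancellation Lemma~\ref{lem:LrgCan1}, which is your Fourier/Bessel argument written out termwise.

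The gap is the longitudinal laws \eqref{eq:necessary3} and \eqref{eq:necessary5}. Neither of your options is carried through. Option~(a), It\^o on $\tfrac12\fint(\delta_h u\cdot\hat h)^2\,dx$, produces a pressure term $\hat h_i\hat h_j\,\E\fint\delta_h u_i\,\partial_j\delta_h p\,dx$ which incompressibility alone does not kill (only the trace $\E\fint\delta_h u\cdot\delta_h\nabla p\,dx$ vanishes), and you have not shown it disappears under the spherical average. Option~(b) requires a KHM-type identity for the cross structure function $(\delta u\cdot t)^2(\delta u\cdot n)$, which you have not supplied and which is not obviously available. The paper handles the longitudinal case by a separate identity (Lemma~\ref{lem:khmparallel}), obtained from the tensor KHM (Proposition~\ref{prop:KHM-vel}) by testing against $\eta(y)=\phi(|y|)I+\varphi(|y|)\hat y\otimes\hat y$; this radially-symmetric tensor form is precisely what makes the pressure contribution vanish. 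The resulting relation expresses $\bar D_{||}(\ell)$ in terms of $\int_0^\ell r^2\bar D(r)\,dr$ (already controlled by \eqref{eq:necessary2} resp.\ \eqref{eq:necessary4}) plus the longitudinal correlators $\bar\Gamma_{||},\bar G_{||},\bar a_{||}$. For \eqref{eq:necessary3} these are Taylor-expanded to fourth order---requiring the sixth-order isotropic tensor identity in Appendix~\ref{sec:tensorshit}---and the constants again cancel via the energy/enstrophy balances; for \eqref{eq:necessary5} one instead applies the tensor large-scale cancellation Lemma~\ref{lem:LrgCan2}. Your Fourier strategy could in principle be adapted once Lemma~\ref{lem:khmparallel} is in hand, but the derivation of that identity (and the elimination of the pressure) is the missing ingredient.
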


\begin{remark}
By writing finite differences on the Fourier side, the pre-compactness condition \eqref{def:PCdiss} is equivalent to
\begin{align*}
\lim_{|h|\to 0}\sup_{\nu,\alpha \in(0,1)}\nu \E\|\nabla \delta_h\omega\|_\lambda^2 = 0 \Leftrightarrow  \lim_{N \to \infty}\sup_{\nu,\alpha \in(0,1)}\nu \E\|\nabla \omega_{\geq N}\|_\lambda^2 = 0, 
\end{align*}
where $f_{\geq N}$ denotes Fourier projection to frequencies larger than $N$. 
\end{remark}

\begin{remark}
Condition \eqref{def:PCdiss} is really the pre-compactness condition that rules out a dissipation anomaly. Condition \eqref{def:PCdamp} on the other hand seems purely technical and is used to rule out that the large-scale damping is playing a major role in the small-scale inertial range. Note that for $\nu \approx \alpha$, \eqref{def:PCdiss} is strictly much stronger than \eqref{def:PCdamp}, and so only \eqref{def:PCdiss} is needed in that case. Analogously, it is the equi-integrability \eqref{def:EQdamp} that rules out a large-scale damping anomaly, whereas \eqref{def:EQdiss} rules out that the viscosity plays a major role in the large-scale inertial range (and for $\nu \approx \alpha$, only \eqref{def:EQdamp} is needed).  
\end{remark}

% \begin{remark}
% It is not hard to check that Theorem \ref{thm:InformalDual} (and Theorems \ref{thm:NonUniIsoDirect} and \ref{thm:NonUniIsoInv} below) also extend to the hyperviscosity case $\nu \Delta \omega \mapsto - \nu (-\Delta)^{M} \omega$ with $M > 1$ (the definition for 
% \end{remark}

\subsubsection{Isolated cascades} \label{sec:IsoCasInt}

The dual cascade stated in Theorem~\ref{thm:InformalDual} is significantly more complicated than the corresponding theorem for the 3d case \cite{BCZPSW18}, which only has one cascade.
A very natural question is whether one can set up the problem in order to see only one of the cascades, e.g. only the direct cascade or only the inverse cascade.
Indeed, most experiments and computer simulations have only captured one of the cascades. For example, the relatively recent \cite{Boffetta2010} was the first numerical simulation to give convincing evidence for both cascades in the same simulation. See \cite{BE12} for more discussions. 

First, we consider the problem of fixing $\alpha$ and $\lambda$ and sending $\nu \to 0$ and isolating only a small-scale inertial range with a direct cascade of enstrophy.
Several things are different here: first, note that condition \eqref{eq:WADa} on the vorticity is automatic and in fact is satisfied in a quantitative sense: for $\theta = \frac{2\gamma}{2\gamma + 1}$,   
\begin{align}
\nu \EE \norm{\omega}_{L^2}^2 \leq  \nu^{1-\theta} \left( \EE \norm{(-\Delta)^{-\gamma}\omega}_{\lambda}^{2}   \right)^{1-\theta} \left(\EE \nu \norm{\grad \omega}_{\lambda}^2 \right)^{\theta} \lesssim \nu^{1-\theta}. 
\end{align}
On the other hand, since we are not taking $\alpha \to 0$, the effect of $-\alpha(-\Delta)^{-2\gamma}\omega$ on the global enstrophy budget will not vanish.
%Hence, in order to see a non-trivial direct cascade, we need to replace condition \eqref{eq:WADa} that ensures that not all of the enstrophy is dissipated by the damping: 
Subject to essentially the same mild precompactness condition  as above in \eqref{def:PCdamp}, we show that the third order structure functions are non-trivial \emph{if and only if} not all of the enstrophy is dissipated by the damping, namely
\begin{align}
\liminf_{\nu \to 0} \nu \EE \norm{\grad \omega}_{\lambda}^2 > 0. \label{eq:CAD}
\end{align}
This is, indeed,  the classical characterization of anomalous dissipation.
Given that the large scale damping leads to a $\nu$-independent upper bound in $L^2(\Omega;H^{-2\gamma})$, this condition implies a flux of enstrophy from large scales to small scales.
It was shown in \cite{CR} that \eqref{eq:CAD} fails if $\gamma = 0$, that is, the damping ends up dissipating all of the enstrophy. 
Hence, in order to isolate a direct cascade, we will also assume $\gamma > 0$, so that the damping is a higher order effect at small scales. 

%Unfortunately we are not currently aware of a way to prove the third order scaling laws only using \eqref{eq:CAD}; we currently require a very minimal amount of $\nu$-independent  regularity, specifically, similar to Theorem \ref{thm:Nec}, we need that $\set{\omega}_{\nu \in (0,1)}$ is pre-compact in $H^{-2\gamma}$ (condition \eqref{def:HnegPreCom} below).
%This ensures that the large-scale damping is only having an indirect effect by adjusting the value in \eqref{eq:CAD} (see \eqref{def:etanu} below).  

    %However, under this pre-compactness we can show that \eqref{eq:CAD} is \emph{necessary and sufficient} for a direct cascade. 

We refer to the resulting cascade as ``non-uniform'' as it does not (a priori) hold uniformly with respect to the parameters $\alpha,\lambda$.
As above, we state the result somewhat informally. The precise formulation is analogous to that of Theorem \ref{thm:direct} below. 
We remark that condition \eqref{def:HnegPreCom} seems mild (see Remark \ref{rmk:BKregIsoDir}) but a priori, the $L^2(\Omega;H^{-2\gamma})$ norm of the vorticity is only uniformly bounded (from \eqref{eq:ENSTRObal}), not pre-compact.

\begin{theorem}[Isolated (non-uniform) direct cascade] \label{thm:NonUniIsoDirect}
Let $\gamma > 0$ and suppose $\alpha, \lambda$ are fixed.
Suppose that $\set{u}_{\nu > 0}$ is a sequence of statistically stationary solutions such that the following pre-compactness in $H^{-2\gamma}$ holds:       %$\set{(-\Delta)^{-\gamma}\omega}_{\nu \in (0,1)}$ is pre-compact in $L^2$ .
\begin{align}
\lim_{\abs{h} \to 0} \sup_{\nu \in (0,1)} \EE \norm{(-\Delta)^{-\gamma}\delta_h\omega}_{\lambda}^2 = 0. \label{def:HnegPreCom}
\end{align}
%\end{itemize} 
Define
	\begin{align}
	\eta^\ast_\nu  = \nu \EE \norm{\grad \omega}_{\lambda}^2 = \eta -  \alpha \EE \norm{(-\Delta)^{-\gamma}\omega}_\lambda^2. \label{def:etanu}
    \end{align}
If $\liminf_{\nu \to 0} \eta^\ast_\nu > 0$ then there exists $\ell_\nu \in (0,1)$ satisfying $\lim_{\nu \to 0} \ell_\nu = 0$ such that the following laws hold over a \emph{small-scale} inertial range $(\ell_\nu,1)$ at asymptotically small scales:
\begin{align}
& \EE \fint_{\mathbb S} \fint_{\TT_\lambda^2} \abs{\delta_{\ell n} \omega}^2 \delta_{\ell n} u \cdot n \dd x \dd n \sim -2\eta^\ast_\nu \ell, \label{eq:YagIso} \\
& \EE \fint_{\mathbb S} \fint_{\TT_\lambda^2}  \abs{\delta_{\ell n} u}^2\left(\delta_{\ell n} u \cdot n\right) \dd x \dd n \sim \frac{1}{4}\eta^\ast_\nu \ell^3, \label{eq:Lind1Iso} \\
& \EE \fint_{\mathbb S} \fint_{\TT_\lambda^2}   \left(\delta_{\ell n} u \cdot n\right)^3 \dd x \dd n \sim \frac{1}{8}\eta^\ast_\nu \ell^3. \label{eq:Lind2Iso} \\
\end{align}
On the other hand, if $\lim_{\nu \to 0} \eta^\ast_\nu = 0$, then no non-trivial third order scaling law holds (in the same sense as Theorem \ref{thm:Nec}). 
In particular, if the pre-compactness \eqref{def:HnegPreCom} holds, then non-trivial third order scaling laws hold \textbf{if and only if} $\liminf_{\nu \to 0} \eta^\ast_\nu > 0$.  
\end{theorem}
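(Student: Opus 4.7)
The plan is to derive a stochastic Kármán--Howarth--Monin identity for the vorticity two-point function and then read off the structure functions at scale $\ell$. Applying It\^o's formula to $\fint_{\TT^2_\lambda}\omega(x)\omega(x+h)\,\dx$ along \eqref{eq:vNSE} and using stationarity to kill the time derivative yields an identity of the schematic form
\begin{equation}
\tfrac{1}{2}\nabla_h \cdot \EE\!\left[(\delta_h u)\,|\delta_h \omega|^2\right] \;=\; 2\bigl(C_\eta(h)-\eta\bigr) + 2\eta^\ast_\nu + 2\alpha\bigl(\EE\|(-\Delta)^{-\gamma}\omega\|_\lambda^2 - \EE\bigl[(-\Delta)^{-\gamma}\omega\,\cdot\, (-\Delta)^{-\gamma}\omega_h\bigr]\bigr) - \nu\,\EE\|\nabla\delta_h\omega\|_\lambda^2,
\end{equation}
where $C_\eta(h)=\tfrac12\sum_j\fint(\mathrm{curl}\,g_j^\lambda)(x)(\mathrm{curl}\,g_j^\lambda)(x+h)\,\dx$ is the vorticity noise correlation, satisfying $C_\eta(0)=\eta$ and $|C_\eta(h)-\eta|\lesssim|h|^2$ under Assumption \ref{cond:g}. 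Here the assembly of the $\nu$- and $\alpha$-independent quantity $\eta^\ast_\nu$ is precisely the enstrophy balance \eqref{eq:ENSTRObal} repackaged: $\eta^\ast_\nu = \eta - \alpha\EE\|(-\Delta)^{-\gamma}\omega\|_\lambda^2$. Integrating this identity against a radial bump in $B_\ell$ (equivalently, averaging over directions $n\in\mathbb S$ and integrating radially) and using the divergence theorem produces
\begin{equation}
\EE\fint_{\mathbb S}\fint_{\TT_\lambda^2}|\delta_{\ell n}\omega|^2(\delta_{\ell n}u\cdot n)\,\dx\,\dn = -2\eta^\ast_\nu\,\ell + R_{\mathrm{noise}}(\ell) + R_{\mathrm{damp}}(\ell,\nu) + R_{\mathrm{visc}}(\ell,\nu),
\end{equation}
where $|R_{\mathrm{noise}}(\ell)|\lesssim\ell^3$, $R_{\mathrm{damp}}(\ell,\nu)\lesssim\alpha\sup_{|h|\le\ell}\EE\|(-\Delta)^{-\gamma}\delta_h\omega\|_\lambda^2\cdot\ell$ and $R_{\mathrm{visc}}(\ell,\nu)\lesssim\nu\EE\|\omega\|_\lambda^2\cdot\ell/\ell^2$ after integration by parts.

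The main obstacle, and the reason for the precompactness hypothesis \eqref{def:HnegPreCom}, is handling the damping remainder $R_{\mathrm{damp}}$: because $\alpha$ is fixed, the quantity $\alpha\EE\|(-\Delta)^{-\gamma}\omega\|_\lambda^2$ is order one, and nothing a priori forces the correlator at shift $h$ to be close to the value at $h=0$ \emph{uniformly in} $\nu$. Assumption \eqref{def:HnegPreCom} says exactly that $R_{\mathrm{damp}}(\ell,\nu)=o(\ell)$ as $\ell\to 0$ uniformly in $\nu$. For $R_{\mathrm{visc}}$, the interpolation bound $\nu\EE\|\omega\|_\lambda^2\lesssim\nu^{1-\theta}\to 0$ noted before the theorem lets me choose $\ell_\nu\to 0$ such that $\nu\EE\|\omega\|_\lambda^2/\ell_\nu^2\to 0$, so $R_{\mathrm{visc}}=o(\ell)$ for $\ell\gg\ell_\nu$. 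Combining these with the assumption $\liminf_{\nu\to 0}\eta^\ast_\nu>0$ produces \eqref{eq:YagIso} in the range $\ell_\nu\ll\ell\ll1$.

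For \eqref{eq:Lind1Iso} and \eqref{eq:Lind2Iso}, I repeat the It\^o--stationarity procedure with the velocity equation \eqref{eq:NSE} to derive the analogous identity for $\EE[|\delta_h u|^2\,\delta_h u]$. The resulting flux identity relates the velocity third-order structure functions to $\EE\fint(\mathrm{curl}\,\delta_h u)\cdot(\delta_h u) \times (\delta_h u\cdot\hat h)$ type quantities, which via the Biot--Savart relation $u=\nabla^\perp\Delta^{-1}\omega$ and integration in $\ell$ reduce to the vorticity Yaglom law already established, with exactly the numerical constants $1/4$ and $1/8$ computed in the classical derivations of Lindborg and Bernard; the analytic ingredients are the same two remainder bounds (noise, damping, viscosity) as above and introduce no further difficulty.

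Finally, for the necessity direction, assume $\lim_{\nu\to 0}\eta^\ast_\nu=0$. The same Kármán--Howarth identity, coupled with \eqref{def:HnegPreCom} and Assumption \ref{cond:g}, gives
\begin{equation}
\frac{1}{\ell}\,\EE\fint_{\mathbb S}\fint_{\TT_\lambda^2}|\delta_{\ell n}\omega|^2(\delta_{\ell n}u\cdot n)\,\dx\,\dn = -2\eta^\ast_\nu + o_{\nu\to 0}(1) + o_{\ell\to 0}(1),
\end{equation}
which is $o(1)$ in the joint limit, and analogous cancellations eliminate the $\ell^3$-scaling of the velocity structure functions. This yields the negative conclusion in the sense of Theorem \ref{thm:Nec}, completing the ``if and only if''.
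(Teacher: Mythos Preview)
Your treatment of the vorticity Yaglom law \eqref{eq:YagIso} is correct and matches the paper's: the K\'arm\'an--Howarth--Monin identity for the vorticity correlator, the splitting of the damping term into $2\alpha\mathfrak{G}(0)=2(\eta-\eta^\ast_\nu)$ plus an increment controlled by the precompactness \eqref{def:HnegPreCom}, and the choice of $\ell_\nu$ via the interpolation bound $\nu\EE\norm{\omega}_\lambda^2\lesssim\nu^{1-\theta}$ are exactly the ingredients used.

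The gap is in your treatment of \eqref{eq:Lind1Iso} and \eqref{eq:Lind2Iso}. The claim that the velocity flux identity ``via the Biot--Savart relation and integration in $\ell$ reduces to the vorticity Yaglom law already established'' is not how the argument works, and I do not see how to make such a reduction go through: there is no simple algebraic identity relating $\abs{\delta_h u}^2\delta_h u$ to $\abs{\delta_h\omega}^2\delta_h u$. The paper instead works directly with the velocity KHM balance (Lemma~\ref{lem:S0-Sphere}),
\[
\frac{\bar D(\ell)}{\ell^3}=-\frac{4\nu\bar\Gamma'(\ell)}{\ell^3}+\frac{4\alpha}{\ell^4}\int_0^\ell r\bar G(r)\,\dr-\frac{4}{\ell^4}\int_0^\ell r\bar a(r)\,\dr,
\]
and the crucial point, which your remark ``the analytic ingredients are the same two remainder bounds'' misses, is that both the noise term and the damping term must be Taylor-expanded to \emph{second} order in $\ell$. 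The noise term gives $\frac{2\eps}{\ell^2}-\frac{\eta}{4}+o_{\ell\to0}(1)$; the damping term gives $\frac{2\alpha\bar G(0)}{\ell^2}+\frac{\alpha}{2}\bar G''(0)+o_{\ell\to 0}(1)$, where the uniform-in-$\nu$ control of the $o(1)$ remainder requires bounding $\bar G''(r)-\bar G''(0)$ via \eqref{def:HnegPreCom} (one order deeper than what you used for the vorticity law). The singular $O(\ell^{-2})$ pieces then cancel via the \emph{energy} balance $\alpha\bar G(0)=\eps-\nu\EE\norm{\omega}_\lambda^2$, and the surviving $O(1)$ pieces combine as $\frac{\eta}{4}-\frac{\alpha}{4}\EE\norm{(-\Delta)^{-\gamma}\omega}_\lambda^2=\frac{\eta^\ast_\nu}{4}$. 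This cancellation---between forcing and damping at leading order, with the enstrophy constant emerging only at the next order---is the substantive content of the velocity argument, and it is not captured by a reduction to the vorticity law. Once \eqref{eq:Lind1Iso} is in hand, \eqref{eq:Lind2Iso} does follow by the integral relation of Lemma~\ref{lem:khmparallel}, analogously to Section~\ref{sub:Lindborg1}.
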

\begin{remark} \label{rmk:ADstrongHs}
By interpolation against \eqref{eq:ENERbal}, note that under the setting of Theorem \ref{thm:NonUniIsoDirect}, \eqref{eq:CAD} implies the following blow-ups of all higher Sobolev norms:
for all $s > 1$,
\begin{align}
   \lim_{\nu \to 0} \nu \EE \norm{\omega}_{H^s}^2 = \infty. 
\end{align}
\end{remark}

\begin{remark}
One expects that the above requires $\alpha$ to be chosen small relative to e.g. $\eta$, in order to induce the hydrodynamic instabilities necessary to start a cascade. 
\end{remark}
\begin{remark} \label{rmk:BKregIsoDir}
The direct cascade power spectrum \eqref{ineq:BKspec2} predicts that $\omega$ should be uniformly bounded in $L^2(\Omega;H^{-2\gamma'})$ for all $\gamma' > 0$. As $H^{-2\gamma'}$ compactly embeds in $H^{-2\gamma}$ for all $0 < \gamma' < \gamma$, \eqref{ineq:BKspec2} would imply pre-compactness \eqref{def:HnegPreCom}.
Experiments suggest \eqref{ineq:BKspec2} is reasonably accurate (though perhaps not exact), so it seems quite reasonable to expect \eqref{def:HnegPreCom}, at least for $\gamma$ sufficiently far from zero. Providing a complete mathematical proof, however, might be challenging. 
\end{remark}

\begin{remark}
	Note that even without taking a large box limit, we still have 
	\begin{align}
	\lim_{\nu \to 0} \alpha \EE \norm{(-\Delta)^{-\gamma}u}_\lambda^2 = \eps, 
	\end{align}
	so that all of the energy is ultimately dissipated by large-scale damping. 
\end{remark}

Next, we turn to an isolated \emph{inverse} cascade.
The conditions we require as $\alpha \to 0$, $\lambda \to \infty$ are quite analogous: that the effect of damping does not vanish combined with equi-integrability at low frequencies (the analogue of the pre-compactness used above in \eqref{def:HnegPreCom}).
Below, $f_{\leq \delta}$ denotes projection to Fourier frequencies $\leq \delta$; see Section \ref{sec:Note}. 
As above, we state the result somewhat informally. The precise formulation is analogous to that of Theorem \ref{thm:inverse} below.

\begin{theorem}[Isolated (non-uniform) inverse cascade] \label{thm:NonUniIsoInv}
Let $\gamma \geq 0$ and $\nu$ fixed. Suppose that $\lambda = \lambda(\alpha) < \infty$ is a continuous monotone increasing function such that $\lim_{\alpha \to 0} \lambda = \infty$.
Suppose that $\set{u}_{\alpha}$ is a sequence of statistically stationary solutions such that the following equi-integrability of enstrophy at low frequencies holds 
\begin{align}
\lim_{\delta \to 0}\limsup_{\lambda \to \infty}\EE \norm{\omega_{\leq \delta}}_{\lambda}^2 = 0. \label{ineq:lowfreqcomp} 
\end{align}
% \begin{itemize}
% \item the following anomalous damping holds
% 	\begin{align}
% 	\label{eq:kupianen1}
% \liminf_{\alpha \to 0} \alpha \EE \norm{(-\Delta)^{-\gamma} u}_{\lambda}^2 > 0; 
% \end{align}
% \item 
% \end{itemize}
Define
	\begin{align}
	\eps^\ast_\alpha  = \damp \EE\norm{(-\Delta)^{\gamma} u}^2_\lambda =  \eps -  \nu \EE \norm{\grad u}^2_\lambda. 
	\end{align}
If $\liminf_{\alpha \to 0} \eps^\ast_\alpha > 0$, then there exists $\ell_\alpha \in (1,\infty)$ satisfying $\lim_{\alpha \to \infty} \ell_{\alpha} = \infty$ such that the following laws hold over a \emph{large-scale} inertial range $(1,\ell_\alpha)$ at asymptotically large scales:
\begin{align}
& \EE \fint_{\mathbb S} \fint_{\TT_\lambda^2}  \abs{\delta_{\ell n} u}^2\left(\delta_{\ell n} u \cdot n\right)\dd x \dd n \sim  2\eps^\ast_\alpha \ell, \label{eq:34invCas1Iso}  \\
& \EE \fint_{\mathbb S} \fint_{\TT_\lambda^2}   \left(\delta_{\ell n} u \cdot n\right)^3 \dd x \dd n \sim \frac{3}{2}\eps^\ast_\alpha \ell. \label{eq:45invCas1Iso}
\end{align}
If $\lim_{\alpha \to 0} \eps^\ast_\alpha = 0$, then no non-trivial third order scaling law holds (in the same sense as Theorem \ref{thm:Nec}). 
In particular, if the equi-integrability \eqref{ineq:lowfreqcomp} holds, then non-trivial third order scaling laws hold \textbf{if and only if} $\liminf_{\alpha \to 0} \eps^\ast_\alpha > 0$.  
\end{theorem}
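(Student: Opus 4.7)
\textbf{Proof approach for Theorem \ref{thm:NonUniIsoInv}.} The plan is to parallel the proof of Theorem \ref{thm:inverse} (the inverse-cascade portion of the dual-cascade theorem) in the regime where $\nu$ is held fixed, replacing the role of $\eps$ by the effective flux $\eps^\ast_\alpha = \eps - \nu\E\|\nabla u\|_\lambda^2$ and carefully tracking the contribution of the now non-vanishing viscous dissipation throughout the argument.

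\textbf{Step 1 (Kármán--Howarth--Monin identity).} Apply It\^o's formula to $|\delta_h u(t,x)|^2$ in the stationary state, average over $x\in\TT_\lambda^2$ and over directions $n\in\S$, and use incompressibility to derive a radial balance of the schematic form
\begin{equation}
\tfrac14\,\E\fint_{\S}\fint_{\TT_\lambda^2}|\delta_{\ell n}u|^2(\delta_{\ell n}u\cdot n)\,\dd x\,\dd n \;=\; \mathcal{C}_g(\ell) \;-\; \mathcal{V}_\nu(\ell) \;-\; \mathcal{D}_\alpha(\ell),
\end{equation}
with an analogous identity for the longitudinal cube. Here $\mathcal{C}_g,\mathcal{V}_\nu,\mathcal{D}_\alpha$ are two-point correlations associated with the noise, the viscous dissipation, and the damping respectively; on the Fourier side they can be read off as saturations of $\eps$, $\nu\E\|\nabla u\|_\lambda^2 = \eps-\eps^\ast_\alpha$, and $\alpha\E\|(-\Delta)^{-\gamma}u\|_\lambda^2 = \eps^\ast_\alpha$ as $\ell\to\infty$.

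\textbf{Step 2 (Large-scale asymptotics).} For each of the three terms one establishes, at scales $\ell\in(1,\ell_\alpha)$: (i) by Assumption \ref{cond:g}, $\mathcal{C}_g(\ell) = 2\eps\,\ell + o(\ell)$; (ii) expanding the viscous correlation via a Fourier/Bernstein argument and using that the velocity gradient is supported essentially at scales below the injection scale, $\mathcal{V}_\nu(\ell) = 2(\eps-\eps^\ast_\alpha)\,\ell + o(\ell)$; (iii) writing
\begin{equation}
\mathcal{D}_\alpha(\ell) = 2\eps^\ast_\alpha - 2\alpha\,\E\fint_\S\fint_{\TT_\lambda^2}(-\Delta)^{-\gamma}u(x+\ell n)\cdot(-\Delta)^{-\gamma}u(x)\,\dd x\,\dd n,
\end{equation}
the equi-integrability \eqref{ineq:lowfreqcomp} — transferred from $\omega$ to $(-\Delta)^{-\gamma}u$ via Biot--Savart and interpolation with the energy balance \eqref{eq:ENERbal} — forces the correlation to be $o(\eps^\ast_\alpha)$ for $\ell\in(1,\ell_\alpha)$, where $\ell_\alpha$ is \emph{defined} as the largest scale for which this smallness is quantitative, and satisfies $\ell_\alpha\to\infty$ as $\alpha\to 0$. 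Adding the three contributions produces $2\eps^\ast_\alpha\ell + o(\ell)$ on the right-hand side, yielding \eqref{eq:34invCas1Iso}; repeating the procedure for the longitudinal cube produces \eqref{eq:45invCas1Iso}.

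\textbf{Step 3 (``If and only if'').} When $\liminf_{\alpha\to 0}\eps^\ast_\alpha>0$, the leading-order term $\eps^\ast_\alpha\ell$ dominates the $o(\ell)$ errors for $\ell\in(1,\ell_\alpha)$ and the scaling laws hold. Conversely, if $\lim_{\alpha\to 0}\eps^\ast_\alpha = 0$, then in Step 2 the noise and viscous linear-in-$\ell$ parts cancel to leading order and the damping contribution is itself $o(\ell)$ uniformly in $\ell$, so the right-hand side of the Kármán--Howarth identity is $o(\ell)$; this is exactly \eqref{eq:necessary4}--\eqref{eq:necessary5} in the sense of Theorem \ref{thm:Nec}, ruling out any non-trivial third-order scaling.

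\textbf{Main obstacle.} The central subtlety, absent from Theorem \ref{thm:inverse}, is that $\mathcal{V}_\nu(\ell)$ is no longer a subleading perturbation: it contributes $2(\eps-\eps^\ast_\alpha)\ell$ at leading order and the inverse-cascade coefficient $\eps^\ast_\alpha$ only appears by cancellation with $\mathcal{C}_g(\ell)$. The hardest step is therefore establishing that this cancellation is \emph{uniform} in $\alpha,\lambda$ at large scales: both correlations must saturate to their $\ell=\infty$ limits at compatible, quantitative rates. This is a statement about the simultaneous decay of the velocity and velocity-gradient two-point correlations at large separations, and it is here that the equi-integrability \eqref{ineq:lowfreqcomp} and Assumption \ref{cond:g} are invoked together with a careful choice of $\ell_\alpha$ so that all error terms remain $o(\eps^\ast_\alpha\ell)$.
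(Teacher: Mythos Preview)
Your large-scale asymptotics in Step~2 are backwards, and this inverts the whole mechanism of the proof. After integrating the K\'arm\'an--Howarth--Monin relation over $\{|y|\le\ell\}$ (Lemma~\ref{lem:S0-Sphere}), each of the three contributions on the right-hand side is a \emph{ball average} of a two-point correlation, e.g.\ the noise term is $-2\fint_{|y|\le\ell}\tr a(y)\,\dy$. By the large-scale cancellation Lemma~\ref{lem:LrgCan1} (Bessel-function decay of the averaging kernel), such ball averages \emph{vanish} as $\ell\to\infty$ whenever the underlying field has no mass piling up at zero frequency. Hence the noise term is $o(1)$ at large $\ell$ (by Assumption~\ref{cond:g}), not $2\eps$ as you claim; and the viscous term $-\tfrac{4\nu}{\ell}\bar\Gamma'(\ell)=2\nu\fint_{|y|\le\ell}\fint\partial_i u^j(x+y)\partial_i u^j(x)\,\dx\,\dy$ is likewise $o(1)$, precisely because the equi-integrability hypothesis~\eqref{ineq:lowfreqcomp} on $\omega$ supplies the low-frequency condition~\eqref{c:EquiInt} needed to apply Lemma~\ref{lem:LrgCan1} to $f^\lambda=\sqrt\nu\,\nabla u$. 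This is where~\eqref{ineq:lowfreqcomp} is actually used---to kill the viscous term, not to control the damping correlation.

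Consequently the coefficient $2\eps^\ast_\alpha$ does \emph{not} arise from a cancellation between noise and viscosity. It comes directly from the damping term, which is expanded about $\ell=0$ (not $\ell=\infty$):
\[
\frac{4\alpha}{\ell^2}\int_0^\ell r\bar G(r)\,\dr
= 2\alpha\bar G(0) + \frac{4\alpha}{\ell^2}\int_0^\ell r\bigl(\bar G(r)-\bar G(0)\bigr)\,\dr
= 2\eps^\ast_\alpha + O\!\left(\ell\,\alpha^{1-\frac{1+\theta}{2}}\right),
\]
with $\theta=\gamma/(1+\gamma)$, the error coming from a Lipschitz bound on $\bar G$ and interpolation against~\eqref{eq:ENERbal}. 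One then \emph{chooses} $\ell_\alpha=o(\alpha^{-1+(1+\theta)/2})$ so this remainder vanishes. Your ``main obstacle''---a uniform cancellation between $\mathcal C_g$ and $\mathcal V_\nu$---is therefore a phantom: both terms are individually $o(1)$, and the only genuine issue is identifying the damping contribution and picking $\ell_\alpha$.
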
 

\begin{remark}
Note that $\EE \norm{\grad u}^2_\lambda \lesssim 1$ uniformly in $\lambda$; condition \eqref{ineq:lowfreqcomp} should be thought of as a low frequency analogue of the pre-compactness assumed in Theorem \ref{thm:IsoDirect}. Finally, we remark that the inverse cascade spectrum in \eqref{ineq:BKspec1} formally predicts $\EE \norm{\omega_{\leq \delta}}_{\lambda}^2 \lesssim \delta^{1/3}$ (which is consistent with \eqref{ineq:lowfreqcomp}). 
\end{remark}

\begin{remark}
Under the conditions of Theorem \ref{thm:IsoDirect} we still have that all of the enstrophy is eventually being dissipated by viscosity
\begin{align}
\lim_{\alpha \to 0} \nu \EE \norm{\grad \omega}_{\lambda}^2 = \eta. 
\end{align}
\end{remark} 

By combining the ideas of Theorem \ref{thm:InformalDual} with Theorems \ref{thm:NonUniIsoDirect}, \ref{thm:NonUniIsoInv} one can also strengthen the conditions and obtain cascades that are uniform with respect to the other parameters. As these are essentially an easy adaptation of the proof of Theorem \ref{thm:InformalDual}, the proofs are omitted for the sake of brevity.

\begin{theorem}[Uniform isolated direct cascade] \label{thm:IsoDirect}
Let $\gamma > 0$ and suppose that $\set{u}_{\alpha,\nu > 0}$ is a sequence of statistically stationary solutions such that $\eqref{eq:WADa}$ holds and the following precompactness holds       
\[
\lim_{\abs{h} \to 0} \sup_{\nu \in (0,1)} \EE \norm{(-\Delta)^{-\gamma}\delta_h\omega}_{\lambda}^2 = 0.
\]
Then the scaling laws \eqref{eq:yag}, \eqref{eq:lind1}, and \eqref{eq:lind2} hold uniformly in $\alpha$.
\end{theorem}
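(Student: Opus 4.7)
The plan is to adapt the argument for the direct cascade portion of Theorem~\ref{thm:InformalDual} (i.e.\ Theorem~\ref{thm:direct}) via two substitutions: the weak anomalous dissipation \eqref{eq:WADa} plays the same role as before, since it is already uniform in $\alpha\in(0,1)$, while the $H^{-2\gamma}$ precompactness hypothesis takes the place of \eqref{eq:WADb} in controlling the large-scale damping. Because the conclusion must be uniform in $\alpha$, every error estimate produced must be verified to vanish as $\nu\to 0$ uniformly over $\alpha\in(0,1)$.

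First I would apply It\^o's formula in the stationary regime to each of the relevant third-order functionals $\delta_h\omega(x)(\delta_h u\cdot n)$, $|\delta_h u|^2(\delta_h u\cdot n)$, and $(\delta_h u\cdot n)^3$, and average in $x\in\TT_\lambda^2$ and $n\in\S$. This decomposes each structure function, as in the proof of Theorem~\ref{thm:direct}, into (a) a noise term, (b) a viscous contribution, (c) a damping contribution, and (d) a nonlinear remainder. Assumption~\ref{cond:g} extracts from (a) the leading constants $-2\eta$, $\tfrac14\eta$, $\tfrac18\eta$ together with corrections $o(1)$ in $\ell$; the remainder (d) is absorbed via the same integration-by-parts manipulations as in~\cite{BCZPSW18}. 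The viscous piece (b) has schematic size $\nu\EE\|\omega\|_\lambda^2/\ell^2$, which by \eqref{eq:WADa} vanishes as $\nu\to 0$ uniformly in $\alpha$ as long as $\ell\gg \bigl(\sup_{\alpha\in(0,1)}\nu\EE\|\omega\|_\lambda^2\bigr)^{1/2}$; this dictates the choice of the dissipative scale $\ell_\nu\to 0$.

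The only genuinely new point is controlling the damping contribution (c) uniformly in $\alpha$. A typical such term has the schematic form
\[
\alpha\,\EE\!\int_{\TT_\lambda^2}\bigl[(-\Delta)^{-2\gamma}\delta_h\omega(x)\bigr]\,\Phi(\delta_h u,\delta_h\omega)(x)\,\dx,
\]
where $\Phi$ is the relevant nonlinear factor; distributing one $(-\Delta)^{-\gamma}$ onto each side and applying Cauchy--Schwarz bounds this by
\[
\bigl(\alpha\,\EE\|(-\Delta)^{-\gamma}\omega\|_\lambda^2\bigr)^{1/2}\,\bigl(\alpha\,\EE\|(-\Delta)^{-\gamma}\delta_h\omega\|_\lambda^2\bigr)^{1/2}
\]
times lower order factors from $\Phi$ that are $\mathcal{O}(\ell)$ or $\mathcal{O}(\ell^3)$ as appropriate. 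The enstrophy balance \eqref{eq:ENSTRObal} bounds the first factor by $\eta^{1/2}$ uniformly in $\nu,\alpha$, while $\alpha\le 1$ combined with the precompactness hypothesis forces the second factor to vanish as $|h|\to 0$ uniformly in $\alpha$. Choosing $\ell_\nu\to 0$ slowly enough that all three error estimates vanish simultaneously then yields the laws \eqref{eq:yag}, \eqref{eq:lind1}, and \eqref{eq:lind2} uniformly in $\alpha$. I expect the bulk of the technical work to be routine bookkeeping already present in Theorem~\ref{thm:direct}; the only truly new estimate is the Cauchy--Schwarz bound on the damping term, which is why this result is recorded as an easy adaptation.
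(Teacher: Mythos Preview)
Your proposal is correct and follows exactly the route the paper indicates: the paper omits this proof entirely, stating it is ``essentially an easy adaptation of the proof of Theorem~\ref{thm:InformalDual}'' combined with the precompactness treatment of the damping from Theorem~\ref{thm:NonUniIsoDirect} (Section~6.1). Your plan---rerun the KHM-based direct-cascade argument of Section~\ref{sec:Direct}, keep the \eqref{eq:WADa} treatment of the viscous term verbatim, and substitute the $H^{-2\gamma}$ precompactness for \eqref{eq:WADb} when controlling the damping---is precisely that adaptation.

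One clarification worth noting: in the paper's framework the damping enters the spherically averaged KHM balances (Lemmas~\ref{lem:Yag-KHM-Sphere}, \ref{lem:S0-Sphere}, \ref{lem:khmparallel}) through the two-point correlators $\bar{\mathfrak G}$, $\bar G$, $\bar G_{||}$, which are \emph{not} increments. Your Cauchy--Schwarz estimate is exactly what bounds the incremental pieces $\bar{\mathfrak G}(r)-\bar{\mathfrak G}(0)$, $\bar G(r)-\bar G(0)$, $\bar G''(r)-\bar G''(0)$, etc., after the add-and-subtract maneuver used in Section~6.1; but the constant parts such as $\alpha\bar{\mathfrak G}(0)=\alpha\EE\|(-\Delta)^{-\gamma}\omega\|_\lambda^2$ must be tracked separately and combined with the noise contribution (this is where $\eta$ vs.\ $\eta^\ast_\nu$ is resolved). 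That is the only piece of bookkeeping you have not made explicit, and it is handled exactly as in Section~6.1.
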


\begin{theorem}[Uniform isolated inverse cascade]
Let $\gamma \geq 0$ and suppose that $\set{u}_{\alpha,\nu > 0}$ is a sequence of statistically stationary solutions such that $\eqref{eq:WADb}$ holds and the following equi-integrability condition holds at low frequencies
\begin{align}
\lim_{\delta \to 0}\limsup_{\lambda \to \infty}\EE \norm{\nabla u_{\leq \delta}}_{\lambda}^2 = 0.
\end{align}
then the scaling laws \eqref{eq:inv1} and \eqref{eq:inv2} hold uniformly in $\nu$.
\end{theorem}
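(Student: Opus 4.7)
The plan is to adapt the proof of the inverse-cascade half of Theorem~\ref{thm:InformalDual} (i.e., Theorem~\ref{thm:inverse}), with the low-frequency equi-integrability hypothesis taking the role played there by condition~\eqref{eq:WADa}. The starting point is a Karman--Howarth--Monin (KHM) identity obtained by applying Ito's formula to the two-point velocity correlation $\EE[u(t,x)\cdot u(t,x+h)]$ for stationary solutions, then averaging spatially in $x\in \TT_\lambda^2$ and angularly over $n\in \mathbb S$ (with $h = \ell n$), and using $\partial_t = 0$ by stationarity. This yields a balance of the schematic form
\begin{align*}
\EE\fint_{\mathbb S}\fint_{\TT_\lambda^2}|\delta_{\ell n} u|^2(\delta_{\ell n}u\cdot n)\,\dx\,\dn = \mathcal N_g(\ell)-\mathcal V_\nu(\ell)-\mathcal D_\alpha(\ell),
\end{align*}
together with its analog for the third-order longitudinal structure function appearing in~\eqref{eq:inv2}. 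Here $\mathcal N_g(\ell)$, $\mathcal V_\nu(\ell)$, and $\mathcal D_\alpha(\ell)$ denote the scale-$\ell$ contributions from the noise covariance, the viscous dissipation, and the large-scale damping.

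The noise and damping terms are handled exactly as in the proof of Theorem~\ref{thm:inverse}. By Assumption~\ref{cond:g} and a Taylor expansion of the noise covariance at small separation, $\mathcal N_g(\ell) = 2\eps\ell + O(\ell^3)$ (respectively $\tfrac{3}{2}\eps\ell + O(\ell^3)$ for~\eqref{eq:inv2}) with constants uniform in $\lambda,\nu,\alpha$; this provides the leading-order behavior. For the damping, one shows $|\mathcal D_\alpha(\ell)| \leq C\ell \cdot \sup_\nu \alpha\EE \norm{(-\Delta)^{-\gamma}u}_\lambda^2 \cdot \phi(\ell)$ with $\phi$ bounded, and then a choice of $\ell_\alpha$ satisfying $\ell_\alpha^2 \sup_\nu \alpha\EE \norm{(-\Delta)^{-\gamma}\omega}_\lambda^2 \to 0$ combined with~\eqref{eq:WADb} yields $\mathcal D_\alpha(\ell)/\ell \to 0$ uniformly in $\nu$ throughout $\ell\in(1,\ell_\alpha)$.

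The only genuinely new step is the uniform-in-$\nu$ control of $\mathcal V_\nu(\ell)/\ell$, since the anomalous dissipation bound $\nu\EE\norm{\omega}_\lambda^2 \to 0$ used in the proof of Theorem~\ref{thm:inverse} is unavailable here. Expanding in the Fourier basis on $\TT_\lambda^2$ and carrying out the angular average, the viscous term takes the form $\mathcal V_\nu(\ell) = 2\nu\sum_{k}|k|^2\,\EE|\hat u(k)|^2\,\Phi_\ell(k)$, where $\Phi_\ell(k) = 1 - J_0(|k|\ell)$ satisfies $\Phi_\ell(k) \lesssim (|k|\ell)^2 \wedge 1$. Splitting at $|k| = 1/\ell$ and using the quadratic bound on $\Phi_\ell$ at low frequencies together with the plain $\leq 2$ bound at high frequencies yields
\begin{align*}
\mathcal V_\nu(\ell) \lesssim \nu\EE\norm{\nabla u_{\leq 1/\ell}}_\lambda^2 + \nu\EE\norm{\nabla u_{> 1/\ell}}_\lambda^2 \leq \EE\norm{\nabla u_{\leq 1/\ell}}_\lambda^2 + \eps,
\end{align*}
where the high-frequency piece is bounded via the energy balance~\eqref{eq:ENERbal} and $\nu\leq 1$. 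Dividing by $\ell$ and letting $\alpha\to 0$ (so $\lambda\to\infty$ and $\ell_\alpha\to\infty$) with $\ell\to\infty$ inside the inertial range, the $\eps/\ell$ contribution vanishes trivially, while the equi-integrability hypothesis forces $\EE\norm{\nabla u_{\leq 1/\ell}}_\lambda^2 \to 0$ uniformly in $\nu$. The chief obstacle is calibrating $\ell_\alpha$ so that $\mathcal D_\alpha/\ell$, $\mathcal V_\nu/\ell$, and the noise remainder $O(\ell^2)$ all vanish simultaneously as $\alpha\to 0$ uniformly in $\nu\in(0,1)$; the quadratic decay $\Phi_\ell(k)\lesssim(|k|\ell)^2$ of the angular-averaged viscous kernel, coupled with the low-frequency equi-integrability, is the mechanism that substitutes for~\eqref{eq:WADa}. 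Once this calibration is in place, the KHM identity yields $\frac{1}{\ell}\EE\fint_{\mathbb S}\fint_{\TT_\lambda^2}|\delta_{\ell n}u|^2(\delta_{\ell n}u\cdot n)\,\dx\,\dn \to 2\eps$ and the longitudinal analog $\to \frac{3}{2}\eps$ uniformly in $\nu$, proving~\eqref{eq:inv1} and~\eqref{eq:inv2}.
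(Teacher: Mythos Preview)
Your treatment of the noise and damping contributions is backwards, and this is a genuine conceptual error about how the inverse cascade works. You claim that a Taylor expansion at small separation gives $\mathcal N_g(\ell) = 2\eps\ell + O(\ell^3)$ and that the damping satisfies $\mathcal D_\alpha(\ell)/\ell \to 0$. But we are in the large-$\ell$ regime, where a small-separation Taylor expansion of the noise covariance is useless. In the actual proof of Theorem~\ref{thm:inverse} (Section~\ref{sec:Inverse}), the noise term \emph{vanishes} at large scales via the cancellation Lemma~\ref{lem:LrgCan1} applied to the $g_j^\lambda$ (this is where Assumption~\ref{cond:g} enters), while the $2\eps$ comes from the \emph{damping} term: one writes $\frac{4\alpha}{\ell^2}\int_0^\ell r\bar G(r)\,\dr = 2\alpha\bar G(0) + \text{(remainder)}$, uses the energy balance~\eqref{eq:ENERbal} to see $\alpha\bar G(0) = \alpha\EE\norm{(-\Delta)^{-\gamma}u}_\lambda^2 = \eps - \nu\EE\norm{\omega}_\lambda^2$, and controls the remainder with~\eqref{eq:WADb}. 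Your bound ``$|\mathcal D_\alpha(\ell)| \leq C\ell\cdot\sup_\nu\alpha\EE\norm{(-\Delta)^{-\gamma}u}_\lambda^2\cdot\phi(\ell)$'' cannot yield $\mathcal D_\alpha/\ell\to 0$, since $\alpha\EE\norm{(-\Delta)^{-\gamma}u}_\lambda^2$ is bounded below by something like $\eps$, not tending to zero.

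Your handling of the viscous term is on the right track conceptually---indeed the paper (cf.\ Section~6.2) simply applies Lemma~\ref{lem:LrgCan1} componentwise to $f^\lambda = \sqrt{\nu}\,\partial_{x_i}u^j$, the hypotheses being exactly the equi-integrability assumption (using $\nu\leq 1$) and the energy balance~\eqref{eq:ENERbal}---but your execution is garbled: the kernel is $\Phi_\ell(k) = \frac{1}{\ell|k|}J_1(\ell|k|)$, not $1 - J_0(|k|\ell)$, and there is confusion about whether $\mathcal V_\nu(\ell)$ has already been divided by $\ell$. Note also that in the uniform-in-$\nu$ setting the leftover $\nu\EE\norm{\omega}_\lambda^2$ from the damping expansion does not vanish via~\eqref{eq:WADa}; it is absorbed together with the viscous term through the same large-scale cancellation/equi-integrability mechanism.
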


\subsection{Notation and conventions} \label{sec:Note}
We write $f \lesssim g$ if there exists $C > 0$ such that $f \leq C g$ (and analogously $f \gtrsim g$). We write $f \approx g$ if there exists $C > 0$ such that $C^{-1}f \leq g \leq Cf$.
Furthermore, we use hats to denote vectors with unit length, that is, if $h\neq 0$ is some vector, then $\hat{h}=h/|h|$. We denote the averaged $L^2$-norm in space by $\norm{f}_\lambda := \left(\fint_{\mathbb T_\lambda^2} \abs{f(x)}^2 \dx\right)^{1/2}$
and will sometimes use the notation
\begin{equation}
L^p:=L^p(\T^2),\quad W^{s,p}:= W^{s,p}(\T^2), \quad H^s := W^{s,2}(\T^2), 
\end{equation}
to denote $L^p$ and $W^{s,p}$ spaces over $\T^2$.
%We will also sometimes abbreviate $L^p$, $W^{s,p}$ and $C^k$-spaces in the variables $(\omega,t,x,\ell)\in \Omega\times[0,T]\times\T^2\times\R_+$ in the following way:
%\begin{equation}
%L^p_x:=L^p(\T^2),\quad W^{s,p}_x:= W^{s,p}(\T^2),\quad L^q_t:=L^q([0,T]),\quad
% L^r_\omega:=L^r(\Omega),\quad C^k_\ell := C^k(\R_+),
%\end{equation}
%etc., or use combinations of these, for example, $L^q_tL^p_x$ denotes the space $L^q([0,T];L^p(\T^2))$ and $L^p_{t,x}$ denotes $L^p([0,T]\times\T^2)$.
%We denote by  $\Ldiv$ and $\Hdiv$ the completions of divergence-free smooth functions with zero average (denoted by $ C_\diverg^{\infty}$) with respect to the $L^2$ and $H^1$-norms. For any $\alpha \in \Real$, $\HH_x^\alpha$ will denote the usual homogeneous Sobolev spaces. 
%With a slight abuse of notation we will say a vector field $u(x)\in \R^2$ or a rank two tensor field $A\in \R^{2\times 2}$ belongs to a space $X$ if each component $u^i$ and $A_{ij}$ belongs to $X$. 

We will also make frequent use of component-free tensor notation. Specifically, given any two vectors $u$ and $v$ we will denote $u\tensor v$ the rank two tensor with components $(u\tensor v)_{ij} = u^iv^j$. Moreover given any two rank two tensors $A$ and $B$ we will denote $:$ the Frobenius product defined by, $A:B = \sum_{i,j} A_{ij}B_{ij}$ and the norm $|A| = \sqrt{A:A}$.

Finally, we use the following Fourier analysis conventions:
\begin{align}
\hat{f}(k) & = \fint_{\TT_\lambda^2} f(x) \e^{-ix \cdot k} \dx, 
\quad f(x)  = \frac{1}{\lambda^2}\sum_{k \in \mathbb Z^2_\lambda} \hat{f}(k) \e^{ix\cdot k}, \quad f_{\leq N}(x)  = \frac{1}{\lambda^2}\sum_{k \in \mathbb Z^2_\lambda : \abs{k} \leq N} \hat{f}(k) \e^{ix\cdot k},
\end{align}
where  $\Z_{\lambda}^2=\frac{2\pi}{\lambda}\Z^2$.

\section{Preliminaries and K\'arm\'an-Horvath-Monin relations}
\subsection{Statistically stationary mild solutions} \label{sec:def}

Unlike our previous work \cite{BCZPSW18} on 3d Navier-Stokes, the solutions of \eqref{eq:vNSE} are quite well-behaved for $\nu,\alpha > 0$ and $\lambda \in (1,\infty)$ and one does not have to work with weak solutions. Instead, we work with mild solutions, which are the stochastically strong solutions of the stochastic evolution equation.  

\begin{definition}
Given a complete filtered probability space $(\Omega,\mathcal{F},(\mathcal{F}_t)_{t\in[0,T]},\P)$, a mild solution $(\omega_t)$ to \eqref{eq:vNSE} is an $\mathcal{F}_t$ adapted process $\omega : [0,T]\times\Omega \to L^2$ satisfying
\[
	\omega_t = \e^{-\nu\Delta t}\omega_0 - \int_0^t \e^{-\nu\Delta(t-s)} (u_s \cdot \nabla \omega_s - \alpha(-\Delta)^{-2\gamma}\omega_s)\ds + \sum_{j\in \N}\int_0^t \e^{-\nu\Delta (t-s)}g^\lambda_j \dee W^j_s.
\]
\end{definition}

The follow well-posedness result is well-known (see for instance \cite{KS}).

\begin{proposition} \label{prop:WP}
Suppose $\ep$ and $\eta$ are finite and Assumption~\ref{cond:g}, then for all $\nu, \alpha >0$ and $\lambda \geq 1$, the system \eqref{eq:vNSE} admits a global-in-time, $\P$-a.s. unique, mild solution $(\omega_t)$ with initial data $\omega_0$. Moreover, $(\omega_t)$ defines a Feller Markov process which has at least one stationary probability measure $\mu$ supported on $H^3$. That is, a measure satisfying the following for all bounded, measurable $\phi: L^2 \to \Real$ and $t \geq 0$: 
\begin{align}
\int_{L^2} \EE\, \phi(\omega_t) \mu(\dd\omega_0) = \int_{L^2} \phi\, \dee \mu. 
\end{align}
\end{proposition}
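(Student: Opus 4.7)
The plan is to follow the classical program for the 2d stochastic Navier--Stokes equations with additive noise, as developed in~\cite{KS}. First, I would establish local-in-time well-posedness of the mild formulation by a fixed-point argument in $C([0,T];L^2)\cap L^2([0,T];H^1)$. The heat semigroup $\e^{\nu t\Delta}$ provides the usual parabolic smoothing and $-\alpha(-\Delta)^{-2\gamma}$ is a bounded linear perturbation on $L^2$; in two dimensions the nonlinearity is controlled by writing $u\cdot\nabla\omega=\Div(u\omega)$ (since $u$ is divergence-free) and using Ladyzhenskaya:
\begin{align}
\norm{u\cdot\nabla\omega}_{H^{-1}} \lesssim \norm{u\omega}_{L^2} \lesssim \norm{u}_{L^4}\norm{\omega}_{L^4} \lesssim \norm{u}_{L^2}^{1/2}\norm{\omega}_{L^2}\norm{\nabla\omega}_{L^2}^{1/2}.
\end{align}
The stochastic convolution $\sum_j\int_0^t\e^{\nu(t-s)\Delta}g_j^\lambda\,\dee W_s^j$ takes values in $C([0,T];H^{3-\kappa})$ almost surely for every $\kappa>0$, via the factorization method combined with the bound $\sum_j\norm{\nabla^3 g_j^\lambda}_\lambda^2\lesssim 1$ from Assumption~\ref{cond:g}.

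Next, I would upgrade local solutions to global ones using It\^o's formula applied to $\norm{\omega_t}_\lambda^2$, which produces the enstrophy balance
\begin{align}
\dd\norm{\omega_t}_\lambda^2 + 2\nu\norm{\nabla\omega_t}_\lambda^2\,\dt + 2\alpha\norm{(-\Delta)^{-\gamma}\omega_t}_\lambda^2\,\dt = 2\eta\,\dt + \dee M_t,
\end{align}
where $M_t$ is a local martingale with quadratic variation controlled by $\eta\norm{\omega_t}_\lambda^2$. Taking expectation gives $\sup_{t\geq 0}\E\norm{\omega_t}_\lambda^2<\infty$, ruling out blow-up. Pathwise uniqueness then follows from a standard $L^2$ estimate on the difference of two solutions driven by the same noise, again via the inequality above. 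To propagate regularity up to $H^3$, I would apply It\^o's formula to $\norm{(-\Delta)^{3/2}\omega_t}_\lambda^2$: the quadratic variation term is controlled by $\sum_j\norm{\nabla^3 g_j^\lambda}_\lambda^2\lesssim 1$ and the trilinear term from $u\cdot\nabla\omega$ is absorbed into the dissipation $\nu\norm{\nabla^4\omega}_\lambda^2$ via Sobolev interpolation in 2d, yielding $\sup_{t\geq 0}\E\norm{\omega_t}_{H^3}^2<\infty$ whenever $\omega_0\in H^3$, with a constant depending on $\nu,\alpha,\lambda$.

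The Markov property follows from pathwise uniqueness and the Wiener filtration structure, and the Feller property from continuous dependence on initial data, itself proven by a basic $L^2$ estimate on the difference of two solutions starting from nearby data with the same noise path. For the existence of a stationary measure I would invoke Krylov--Bogoliubov: for any $\omega_0\in H^3$ the time averages $\mu_T:=\frac{1}{T}\int_0^T P_t^*\delta_{\omega_0}\,\dt$ are tight on $L^2$, because the uniform $H^3$ moment bound combined with the compact embedding $H^3\embeds L^2$ yields tightness via Markov's inequality. Any weak-$*$ limit $\mu$ along a subsequence $T_n\to\infty$ is an invariant probability measure for the Feller semigroup, and the uniform $H^3$ bound transfers to $\mu$ by Fatou, so $\mu(H^3)=1$.

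The main technical point is the $H^3$-in-expectation propagation: the trilinear contribution $\int \nabla^3(u\cdot\nabla\omega)\cdot\nabla^3\omega\,\dx$ must be closed only against the available $\nu\norm{\nabla^4\omega}_\lambda^2$ of dissipation, which succeeds in 2d thanks to the scalar nature of the vorticity, though the resulting constant degenerates as $\nu\to 0$. This is consistent with the fact that the refined, uniform-in-$\nu,\alpha$ estimates needed for the flux laws in the remainder of the paper require the weak anomalous dissipation assumption of Definition~\ref{def:WAD}, not merely the bare existence statement proved here.
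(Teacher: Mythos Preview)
The paper does not actually prove Proposition~\ref{prop:WP}; it simply states the result as well-known and refers the reader to~\cite{KS}. Your sketch follows exactly the standard program one finds in that reference (local fixed point, global via the enstrophy balance, pathwise uniqueness, Markov/Feller, Krylov--Bogoliubov tightness from a uniform higher Sobolev moment), so there is nothing to compare against and your outline is correct.
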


\begin{remark}
Under various non-degeneracy conditions on the noise $W^\lambda_t$, one can prove that there is a unique stationary measure (see for instance \cites{FM95,HM06}). However, uniqueness of the stationary measure is irrelevant to our discussion.  %, which only requires that there is {\it some} noise  with $0 < \ep < \infty$.
\end{remark}

\begin{definition}
We call $(\omega_t)$ a \emph{statistically stationary} solution to \eqref{eq:vNSE} if for each $\tau >0$ the law of $(\omega_t)$ and $(\omega_{t+ \tau})$ are equal on $C(\R_+;L^2)$.
\end{definition}
A statistically stationary solution can be built from a stationary measure $\mu$ by starting the process $(\omega_t)$ with initial data $\omega_0$ distributed according to $\mu$. Consequently, at every later time $t>0$, the law of $\omega_t$ is also distributed like $\mu$. Therefore for each $\phi \in L^1(\mu)$, $T, t>0$,  a statistically stationary solution $(\omega_t)$ satisfies: 
\begin{align}
\frac{1}{T} \int_0^T \EE \,\phi(\omega_s) \ds  =  \EE\, \phi(\omega_t) = \int_{L^2} \phi\, \dee\mu. 
\end{align}

\subsection{K\'arm\'an-Horvath-Monin relations}

The fundamental energy balance identities for proving Theorem \ref{thm:InformalDual} are the K\'arm\'am-Horvath-Monin (KHM) relations for statistically stationary solutions.
They are a natural balance law between a two-point correlation function and its flux, a third order structure function. In this section, we collect the various KHM relations for the vorticity and velocity form of the 2d stochastic Navier-Stokes equations.
For 3D Navier-Stokes, the KHM relation was derived by K\'arm\'an and Howarth~\cite{deKarman1938}, and later generalized by Monin~\cite{MoninYaglom}.
In two dimensions, an analogous KHM relation was used by Eyink \cite{Eyink96} to predict that Yaglom's law \eqref{eq:YagIntro} holds (this is also how one proves this law for passive scalar turbulence; see \cite{BBPS18}).
For the velocity structure functions in two-dimensions, they were used in \cite{XB18}.

\subsubsection{Vorticity relations}
We define the two point correlators for the vorticity and curl of the noise,
\begin{align}
%\mathfrak{B}(y) & = \EE \fint_{\mathbb T^2_\lambda} \fint_{\mathbb S} \omega(t,x) \omega(t,x+rn) dx dS(n) \\
%\mathcal{E}(y) & = \EE \fint_{\mathbb T^2_\lambda} \fint_{\mathbb S} (-\Delta)^{-\gamma}\omega(t,x) (-\Delta)^{-\gamma}\omega(t,x+rn) dx dS(n) \\
\mathfrak{B}(y) & = \EE \fint_{\mathbb T^2_\lambda} \omega(x) \omega(x+y) \dee x \label{eq:Bdef}\\
\mathfrak{G}(y) &= \EE \fint_{\mathbb T^2_\lambda} (-\Delta)^{-\gamma}\omega(x) (-\Delta)^{-\gamma}\omega(x+y) \dx\label{eq:Gfrakdef}\\
\mathfrak{a}(y) & = \frac{1}{2}\sum_j\fint_{\T_\lambda^2} \curl g^\lambda_j(x) \curl g^\lambda_j(x+y)\dx\label{eq:afrakdef}
\end{align}
as well as the corresponding enstrophy flux structure function
\[
\mathfrak{D}(y) = \EE \fint_{\TT^2_\lambda} \abs{\delta_{y} \omega(x)}^2  \delta_y u(x)\, \dx. 
\]
Given Assumption~\ref{cond:g} and Proposition~\ref{prop:WP}, one can check that these quantities are all at least $C^3$.

The KHM relation for vorticity is then a relation between $\mathfrak{B},\mathfrak{G}, \mathfrak{a}$ and $\mathfrak{D}$ given by following Proposition.

\begin{proposition}[Vorticity KHM relation] \label{prop:vKHM}
Let $(\omega_t)$ be a statistically stationary solution to \eqref{eq:vNSE}. Then for the following relation holds
% Then, for any $\varphi = \varphi(y)$ a smooth, compactly supported test function on $\R^2$, there holds
\begin{align}
\grad\cdot \mathfrak{D}(y) = -4\nu \Delta \mathfrak{B}(y) + 4\alpha \mathfrak{G}(y)  - 4 \mathfrak{a}(y). \label{eq:vKHM}
\end{align}
% \begin{align}
% \frac{1}{2} \int_{\Real^2} \grad\varphi(y) \cdot \mathfrak{F}(y) \dy = 2\nu \int_{\Real^2} \Delta \varphi (y) \mathfrak{B}(y) \dy - 2\alpha \int_{\Real^2} (-\Delta)^{-2\gamma}\varphi (y) \mathfrak{B}(y) \dy  + 2 \int_{\Real^2} \varphi(y) \mathfrak{a}(y) \dy. \label{eq:vKHM}
% \end{align}
\end{proposition}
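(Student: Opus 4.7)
The plan is to apply It\^o's formula to the quadratic two-point functional $F_y(\omega) := \fint_{\T_\lambda^2} \omega(x)\omega(x+y)\,\dx$ along the stationary mild solution $(\omega_t)$, and then exploit $\ddt \EE F_y(\omega_t) = 0$. Since $F_y$ is quadratic on $L^2$, its Fr\'echet derivatives are
\begin{equation*}
F_y'(\omega)(\phi) = \fint_{\T_\lambda^2}\bigl[\phi(x)\omega(x+y) + \omega(x)\phi(x+y)\bigr]\dx,\quad F_y''(\omega)(\phi,\psi) = \fint_{\T_\lambda^2}\bigl[\phi(x)\psi(x+y) + \psi(x)\phi(x+y)\bigr]\dx,
\end{equation*}
so the stationarity identity reduces, after the martingale part is killed by expectation, to
\begin{equation*}
0 = \EE F_y'(\omega)\bigl(-u\cdot\Grad\omega + \nu\Delta\omega - \alpha(-\Delta)^{-2\gamma}\omega\bigr) + \tfrac{1}{2}\sum_{j}F_y''(\omega)(\curl g^\lambda_j, \curl g^\lambda_j).
\end{equation*}
The $H^3$-regularity of the stationary measure provided by Proposition~\ref{prop:WP} together with Assumption~\ref{cond:g} gives enough smoothness to justify every manipulation classically.

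Next I would identify three of the four terms. Self-adjointness of $\Delta$ and its commutation with translations give $F_y'(\omega)(\nu\Delta\omega) = 2\nu\Delta\mathfrak{B}(y)$, using $\Delta\mathfrak{B}(y) = \EE\fint\omega(x)(\Delta\omega)(x+y)\dx$. Splitting $(-\Delta)^{-2\gamma}=(-\Delta)^{-\gamma}(-\Delta)^{-\gamma}$ and shifting one factor across the pairing by self-adjointness analogously yields $F_y'(\omega)(-\alpha(-\Delta)^{-2\gamma}\omega) = -2\alpha\mathfrak{G}(y)$ in view of \eqref{eq:Bdef}--\eqref{eq:Gfrakdef}. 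The It\^o correction reads
\begin{equation*}
\tfrac{1}{2}\sum_j F_y''(\omega)(\curl g^\lambda_j,\curl g^\lambda_j) = \sum_j \fint_{\T_\lambda^2} \curl g^\lambda_j(x)\curl g^\lambda_j(x+y)\,\dx = 2\mathfrak{a}(y)
\end{equation*}
by the definition \eqref{eq:afrakdef}.

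The heart of the argument is to show that the nonlinear contribution equals $\EE F_y'(\omega)(-u\cdot\Grad\omega) = \tfrac{1}{2}\Grad\cdot\mathfrak{D}(y)$. I would compute $\Grad\cdot\mathfrak{D}(y)$ directly. Since $\Grad\cdot u = 0$ and $u(x)$ does not depend on $y$, the $y$-divergence only hits the factor $|\delta_y\omega|^2$, producing $\Grad\cdot\mathfrak{D}(y) = 2\EE\fint[\omega(x+y)-\omega(x)](\Grad\omega)(x+y)\cdot[u(x+y)-u(x)]\dx$. Expanding the resulting four products, two vanish: the self-advection $\omega(x+y)u(x+y)\cdot(\Grad\omega)(x+y) = \tfrac{1}{2}\Grad\cdot(u\omega^2)(x+y)$ integrates to zero by periodicity, and the term $u(x)\omega(x+y)\cdot(\Grad\omega)(x+y) = \tfrac{1}{2}u(x)\cdot\Grad_x[\omega^2(x+y)]$ integrates to zero after one integration by parts in $x$ using $\Grad\cdot u = 0$. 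Manipulating the two surviving terms with one further integration by parts of the form $\fint\omega(x)\Grad_x\cdot[(u\omega)(x+y)]\dx = -\fint\Grad\omega(x)\cdot u(x+y)\omega(x+y)\dx$ reconstitutes exactly $2\EE F_y'(\omega)(-u\cdot\Grad\omega)$.

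The main obstacle is precisely the bookkeeping in this nonlinear step: one must consistently distinguish derivatives in $x$ from derivatives in $y$, use incompressibility both pointwise and through integration by parts on $\T_\lambda^2$, and verify the two intermediate cancellations. Once this is done, substituting the four identifications into the stationarity identity and multiplying by $-2$ produces exactly \eqref{eq:vKHM}.
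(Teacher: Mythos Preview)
Your proposal is correct and follows precisely the approach the paper has in mind: the paper omits the proof, stating only that it is ``a simplification of the argument used in \cite{BCZPSW18} for the 3d Navier-Stokes equations,'' and your derivation via It\^o's formula applied to the two-point correlator $F_y(\omega)$, followed by identification of the linear terms and the algebraic reduction of the nonlinear contribution to $\tfrac12\Grad\cdot\mathfrak{D}$, is exactly that argument specialized to the scalar vorticity. The only (harmless) slip is that in a couple of places you write $F_y'(\omega)(\nu\Delta\omega)=2\nu\Delta\mathfrak{B}(y)$ without the expectation $\EE$ on the left; since $\mathfrak{B}$ already carries $\EE$, the identity holds after taking expectations, as you correctly do in the master stationarity equation.
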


The proof of this relation is via a simplification of the argument used in \cite{BCZPSW18} for the 3d Navier-Stokes equations. We omit the proof due to its similarity with \cite{BCZPSW18}. %(see also \cite{BBPS18} for an equivalent proof in the case of passive scalars). 

Using the divergence theorem and integrating both sides of $\eqref{eq:vKHM}$ over $\{|y|\leq \ell\}$, we obtain a formula for the spherically averaged structure function
\[
\bar{\mathfrak{D}}(\ell) := \EE \fint_{\S}\fint_{\T_\lambda^2}  |\delta_{\ell n} \omega(x)|^2 \delta_{\ell n} u(x)\cdot n \, \dee x \dee n, 
\]
in terms of spherically averaged correlation functions,
\begin{align}
	\bar{\mathfrak{B}}(\ell) &= \fint_{\S}\mathfrak{C}(\ell n)\dee n,\label{eq:Bbardef}\\
	\bar{\mathfrak{G}}(\ell) &= \fint_{\S}\mathfrak{G}(\ell n)\dee n,\label{eq:Gfrakbardef}\\
	\bar{\mathfrak{a}}(\ell) &= \fint_{\S}\mathfrak{a}(\ell n)\dee n.\label{eq:afrakbardef}
\end{align}
% \begin{align}
% 	\bar{\mathfrak{C}}(\ell) &= \E\fint_{\S}\fint_{\mathbb T^2_\lambda} \omega(x) \omega(x+ \ell n) \dee x \dee n\\
% 	\bar{\mathfrak{G}}(\ell) &= \E\fint_{\S}\fint_{\mathbb T^2_\lambda} (-\Delta)^{-\gamma}\omega(x) (-\Delta)^{-\gamma}\omega(x+\ell n)\dee x \dee n\\
% 	\bar{\mathfrak{a}}(\ell) &= \frac{1}{2}\sum_j\fint_{\S}\fint_{\T_\lambda^2} \curl g^\lambda_j(x) \curl g^\lambda_j(x+\ell n)\dx\dee n
% \end{align}
which is stated as follows. 
\begin{lemma}\label{lem:Yag-KHM-Sphere} The following formula holds for each $\ell >0$
\begin{align}
\bar{\mathfrak{D}}(\ell) &=- 2\nu \ell \fint_{|y|\leq \ell} \Delta \mathfrak{B}(y)\dy + 2\damp \ell \fint_{|y|\leq \ell} \mathfrak{G}(y)\dy - 2 \ell\fint_{|y|\leq \ell} \mathfrak{a}(y)\dy\\
&= -4\nu \bar{\mathfrak{B}}^\prime(\ell) + \frac{4\damp}{\ell} \int_{0}^\ell r \bar{\mathfrak{G}}(r)\dr - \frac{4}{\ell}\int_{0}^\ell r \bar{\mathfrak{a}}(r)\dr.
\end{align}
\end{lemma}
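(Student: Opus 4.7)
The plan is to integrate the pointwise KHM relation from Proposition \ref{prop:vKHM} over the disk $\{|y|\leq \ell\}\subset\R^2$ and then identify each resulting integral with a spherical average. The regularity needed to justify the computations (at least $C^3$ in $y$) is provided by Assumption \ref{cond:g} together with Proposition \ref{prop:WP}, as already noted in the excerpt.

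First, I would integrate both sides of
\[
\grad\cdot \mathfrak{D}(y) = -4\nu \Delta \mathfrak{B}(y) + 4\alpha \mathfrak{G}(y)  - 4 \mathfrak{a}(y)
\]
against Lebesgue measure over $B_\ell := \{|y|\leq \ell\}$ and apply the divergence theorem to the two terms in divergence form. Writing $dS$ for arc-length on $\partial B_\ell$, $y=\ell n$ with $n\in\mathbb{S}^1$, and using $dS = \ell\, dn$,
\[
\int_{B_\ell} \grad\cdot \mathfrak{D}\, \dy = \ell\int_{\mathbb{S}}\mathfrak{D}(\ell n)\cdot n\, \dee n,\qquad
\int_{B_\ell} \Delta \mathfrak{B}\, \dy = \ell\int_{\mathbb{S}} n\cdot \grad \mathfrak{B}(\ell n)\, \dee n.
\]
Dividing by $|\mathbb{S}|=2\pi$ converts each $\int_{\mathbb{S}}$ into $\fint_{\mathbb{S}}$, and since $\mathfrak{D}(\ell n)\cdot n = \E\fint_{\T_\lambda^2}|\delta_{\ell n}\omega|^2\,\delta_{\ell n}u\cdot n\,\dx$, the LHS surface integral reproduces exactly $\bar{\mathfrak{D}}(\ell)$ up to the factor $\ell\cdot 2\pi$. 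Similarly, interchanging the radial derivative with the spherical integral gives $\int_{\mathbb{S}} n\cdot\grad\mathfrak{B}(\ell n)\,\dee n=\frac{\dee}{\dee\ell}\int_{\mathbb{S}}\mathfrak{B}(\ell n)\,\dee n = 2\pi\,\bar{\mathfrak{B}}'(\ell)$.

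For the remaining two terms on the right-hand side I would pass to polar coordinates:
\[
\int_{B_\ell}\mathfrak{G}(y)\,\dy = \int_0^\ell \!\!\int_{\mathbb{S}} \mathfrak{G}(rn)\, r\, \dee n\, \dr = 2\pi\int_0^\ell r\,\bar{\mathfrak{G}}(r)\,\dr,
\]
and analogously for $\mathfrak{a}$. Substituting these four identities back into the integrated KHM relation and dividing through by $2\pi\ell$ yields the second displayed equality
\[
\bar{\mathfrak{D}}(\ell) = -4\nu\,\bar{\mathfrak{B}}^\prime(\ell) + \frac{4\alpha}{\ell}\int_0^\ell r\,\bar{\mathfrak{G}}(r)\,\dr - \frac{4}{\ell}\int_0^\ell r\,\bar{\mathfrak{a}}(r)\,\dr.
\]
The first displayed equality is then recovered by rewriting each right-hand term using $\fint_{B_\ell}=\frac{1}{\pi\ell^2}\int_{B_\ell}$ and undoing the divergence theorem/polar computation above, which cancels the appropriate powers of $\ell$ against the prefactor $2\ell$.

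There is no substantive obstacle here: the argument is essentially an application of the divergence theorem plus polar coordinates in $\R^2$, with the only care required being bookkeeping of the geometric constants ($|B_\ell|=\pi\ell^2$, $|\partial B_\ell|=2\pi\ell$) and the observation that the radial derivative commutes with the angular integral for $C^1$ functions, which is what turns $\int_{\mathbb{S}} n\cdot\grad\mathfrak{B}(\ell n)\,\dee n$ into a derivative of $\bar{\mathfrak{B}}(\ell)$.
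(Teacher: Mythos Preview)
Your proposal is correct and follows exactly the approach the paper indicates: integrate the KHM relation \eqref{eq:vKHM} over the disk $\{|y|\leq \ell\}$, apply the divergence theorem to the two terms in divergence form, and rewrite the area integrals in polar coordinates. The paper does not give a detailed proof of this lemma beyond that one-line description, so your write-up is a faithful expansion of the intended argument with the correct bookkeeping of the factors $|B_\ell|=\pi\ell^2$ and $|\partial B_\ell|=2\pi\ell$.
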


% \subsection{K\'arm\'an-Horvath-Monin relations for velocity}

\subsubsection{Velocity relations}
Similarly, when dealing with the velocity form of the equation, we define the two point correlation tensors 

\begin{align}
\Gamma(y) &:= \E\fint_{\T^2_\lambda} u(x)\tensor u(x+y)\dx,\label{eq:Gamdef}\\
\da(y) &:= \E\fint_{\T^2_\lambda} (-\Delta)^{-\gamma}u(x)\tensor (-\Delta)^{-\gamma}u(x+y)\dx,\label{eq:Gdef}\\
a(y) &:= \frac{1}{2}\sum_j\fint_{\T^2_\lambda} g^\lambda_j(x)\tensor g^\lambda_j(x+y)\dx.\label{eq:adef}
\end{align}
as well as the flux structure function, defined for each $j= 1,2$ by
\[
	D^j(y) = \E\fint_{\T^2_\lambda} (\delta_y u(x)\tensor \delta_y u(x)) \delta_y u^{j}(x)\dx.
\]
Given Assumption~\ref{cond:g} and Proposition~\ref{prop:WP}, one can check that these quantities are all at least $C^4$.
The following KHM relation for the velocity was proved in \cite{BCZPSW18} in the 3d case. It is stated in the following radially symmetric weak form to avoid contributions from the pressure. The only difference between the 2d and 3d cases are the values of the constants.

\begin{proposition}[Velocity KHM relation]\label{prop:KHM-vel}
Let $(u_t)$ be a statistically stationary solution to \eqref{eq:NSE}, and let $\eta(y) = (\eta_{ij}(y))_{ij=1}^2$ be a smooth test function of the form
\[
	\eta(y) = \phi(|y|)I + \varphi(|y|)\hat{y}\tensor\hat{y}, \quad \hat{y} = \frac{y}{|y|},
\]
where $\phi(\ell)$ and $\varphi(\ell)$ are smooth and compactly supported on $(0,\infty)$. Then the following identity holds
\begin{equation}\label{eq:KHMstat}
	\sum_{j =1}^2 \int_{\R^2} \partial_{j}\eta(y):D^j(y)\dy = 4 \nu \int_{\R^2} \Delta \eta(y):\Gamma(y)\dy - 4\alpha \int_{\R^2}\eta(y):G(y)\dy + 4\int_{\R^2}\eta(y):a(y)\dy.
\end{equation}
\end{proposition}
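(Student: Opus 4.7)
The argument is a two-dimensional adaptation of the 3d derivation in \cite{BCZPSW18}; the full proof is essentially identical modulo dimension-dependent arithmetic. The plan is to apply It\^o's formula pointwise in $y$ to the averaged two-point correlator
\[
\Gamma^t(y) := \fint_{\T_\lambda^2} u_t(x)\tensor u_t(x+y)\dx,
\]
take expectations, and use stationarity to force $\partial_t \E\, \Gamma^t(y) = 0$. Substituting \eqref{eq:NSE} for both copies of $u$, evaluating the It\^o cross variation of the noise, and exploiting translation invariance of the $x$-integral on $\T_\lambda^2$, one arrives at the pointwise identity
\[
0 = \mathcal N(y) + \mathcal P(y) + 2\nu\,\Delta \Gamma(y) - 2\alpha\, \da(y) + 2\,a(y),
\]
where $\mathcal N$ collects the two nonlinear advection contributions and $\mathcal P$ the two pressure gradient contributions. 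The $2\nu\Delta\Gamma$ term comes from passing two derivatives onto $y$ via translation invariance, the $-2\alpha\da$ term from self-adjointness of $(-\Delta)^{-\gamma}$ on $\T_\lambda^2$, and the $2a(y)$ term combines the cross variation $\sum_j g^\lambda_j(x)\tensor g^\lambda_j(x+y)$ with the $1/2$ in the definition of $a$.

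The second step is to identify $\mathcal N$ with the flux structure function $D^j$. Using $\Div u = 0$ to rewrite $u\cdot\nabla u$ as $\Div(u\tensor u)$ and integrating by parts twice in $x$ (once to move a gradient off $u(x)$, then to convert a $\partial_x$ on $u(x+y)$ into a $\partial_y$), each nonlinear term becomes a $y$-divergence of a triple $u$-correlator. Expanding $\delta_y u(x) = u(x+y) - u(x)$ in the definition of $D^j$ produces precisely these same triple correlators together with purely $y$-independent cubic moments that drop out once $\partial_y$ is taken. Pairing with a smooth compactly supported tensor test function $\eta$ and one integration by parts in $y$ then yields
\[
\int_{\R^2} \eta(y) : \mathcal N(y)\dy = -\tfrac{1}{2}\sum_{j=1}^2 \int_{\R^2} \partial_j \eta(y) : D^j(y)\dy.
\]
Pairing the remaining terms against $\eta$ is immediate: two integrations by parts transfer the Laplacian from $\Gamma$ onto $\eta$, and the damping and noise terms pair directly. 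Multiplying by $-2$ after testing the pointwise identity produces the prefactors $4\nu$, $-4\alpha$, $+4$ appearing in \eqref{eq:KHMstat}.

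The crux, and the only place where the restriction on $\eta$ enters, is the elimination of $\int \eta : \mathcal P\dy$. Introducing the pressure-velocity correlator $\Pi_j(y) := \E\fint p(x) u^j(x+y)\dx$, integration by parts in $x$ gives $\mathcal P_{ij}(y) = \partial_{y_i} \Pi_j(y) - \partial_{y_j} \Pi_i(-y)$, and incompressibility of $u$ immediately forces $\partial_{y_j}\Pi_j \equiv 0$. After one integration by parts in $y$, the task reduces to showing that $\partial_{y_i}\eta_{ij}(y)$ is the gradient of a scalar in $y$. A direct computation with $\eta_{ij}(y) = \phi(|y|)\delta_{ij} + \varphi(|y|)\hat y_i \hat y_j$ yields
\[
\partial_{y_i}\eta_{ij}(y) = \Bigl(\phi'(|y|) + \varphi'(|y|) + \frac{\varphi(|y|)}{|y|}\Bigr)\hat y_j,
\]
which is radial in the direction $\hat y$, hence of gradient form; pairing this gradient against the divergence-free field $\Pi$ gives zero after a final integration by parts, and the $\Pi_i(-y)$ piece is handled identically under $y\mapsto -y$. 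This pressure-elimination step is the principal subtlety of the proof; all other identifications are term-by-term It\^o and integration-by-parts bookkeeping, and the constants coincide with those in the 3d case of \cite{BCZPSW18} because they originate from dimension-independent translation-invariance and self-adjointness identities.
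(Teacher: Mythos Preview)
Your sketch is correct and follows precisely the approach the paper has in mind: the paper does not prove this proposition in-text but simply refers to the 3d derivation in \cite{BCZPSW18}, noting that the argument transfers with only dimension-dependent arithmetic changing; your It\^o-plus-stationarity derivation of the pointwise balance, your identification of the nonlinear term with the $y$-divergence of $D^j$, and your elimination of the pressure via the radial-gradient structure of $\partial_{y_i}\eta_{ij}$ are exactly that argument. One small remark: your closing sentence says the constants $4\nu,-4\alpha,4$ coincide with the 3d case because they are dimension-independent---that is true for \emph{this} proposition, while the paper's comment that ``the only difference between the 2d and 3d cases are the values of the constants'' is really about the downstream spherically-averaged identities (Lemmas \ref{lem:S0-Sphere} and \ref{lem:khmparallel}), where the sphere-averages $\fint_{\mathbb S} n^in^j\dee n$ etc.\ do depend on dimension.
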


Similar to Lemma \ref{lem:Yag-KHM-Sphere} we can use Proposition \ref{prop:KHM-vel} to deduce formulas for the spherically averaged energy flux structure function
\[
\bar{D}(\ell) := \EE  \fint_{\S} \fint_{\T^2_\lambda}|\delta_{\ell n} u(x)|^2 \delta_{\ell n} u(x)\cdot n \, \dee x \dee n,
\]
and the spherically averaged correlators
\begin{align}
\bar{\Gamma}(\ell) &:=  \fint_{\S}\tr\Gamma(\ell n)\dn,\label{eq:Gambardef}\\
\bar{\da}(y) &:=  \fint_{\S}\tr \da(\ell n)\dn,\label{eq:Gbardef}\\
\bar{a}(y) &:= \fint_{\S}\tr a(\ell n)\dn.\label{eq:abardef}
\end{align}
% \begin{align}
% \bar{\Gamma}(\ell) &:=  \E \fint_{\S}\fint_{\T^{2}_\lambda} u(x)\cdot u(x+\ell n)\dx\dn\\
% \bar{\da}(y) &:=  \E\fint_{\S}\fint_{\T^{2}_{\lambda}} (-\Delta)^{-\gamma} u(x)\cdot (-\Delta)^{-\gamma}u(x+\ell n)\dx\dn\\
% \bar{a}(y) &:= \frac{1}{2}\sum_j\E\fint_{\S}\fint_{\T^2_\lambda} g^\lambda(x)_j\cdot g^\lambda_j(x+\ell n)\dx\dn.
% \end{align}

\begin{lemma}\label{lem:S0-Sphere} The following identity holds for each $\ell >0$
\begin{align}
\bar{D}(\ell) &=- 2\nu \ell \fint_{|y|\leq \ell} \Delta \Tr \Gamma(y)\dy + 2\damp \ell \fint_{|y|\leq \ell} \tr\da(y)\dy - 2\ell\fint_{|y|\leq \ell} \tr a(y)\dy\\
&= -4\nu \bar{\Gamma}^\prime(\ell) + \frac{4\damp}{\ell} \int_{0}^\ell r \bar{G}(r)\dr - \frac{4}{\ell}\int_0^\ell r \bar{a}(r)\dr.
\end{align}
\end{lemma}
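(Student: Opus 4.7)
The plan is to derive this lemma directly from Proposition \ref{prop:KHM-vel}, exactly as Lemma \ref{lem:Yag-KHM-Sphere} is obtained from its vorticity analogue: choose a test function in \eqref{eq:KHMstat} which, in an appropriate limit, is a smooth approximation of $\mathbf{1}_{\{|y|\leq \ell\}}\, I$. Concretely, I would take $\varphi \equiv 0$ and $\phi = \phi_\epsilon$ a smooth function, compactly supported in $(0,\infty)$, equal to $1$ on $[\epsilon,\ell]$ and transitioning smoothly to $0$ on the shells $[0,\epsilon/2]$ and $[\ell+\epsilon,\infty)$, so that $\eta(y) = \phi_\epsilon(|y|)\, I$ is an admissible test tensor for Proposition \ref{prop:KHM-vel}.

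With this choice, $\partial_j \eta_{ik}(y) = \phi_\epsilon'(|y|)\hat y_j \delta_{ik}$, and a direct computation from the definition of $D^j$ gives
\begin{equation*}
\sum_{j=1}^2 \partial_j \eta(y):D^j(y) = \phi_\epsilon'(|y|)\,\E\fint_{\T^2_\lambda} |\delta_y u|^2(\delta_y u\cdot \hat y)\,\dx,
\end{equation*}
so the LHS of \eqref{eq:KHMstat} equals $2\pi\int_0^\infty \phi_\epsilon'(r)\, r\, \bar D(r)\,\dr$ in polar coordinates. As $\epsilon\to 0$, the contribution from the inner transition layer $[0,\epsilon/2]\cup[\epsilon/2,\epsilon]$ is $\mathcal{O}(\epsilon^2)$ because $|\delta_y u|^2\delta_y u = \mathcal O(|y|^3)$ by the $H^3$-regularity of $u$ granted by Proposition \ref{prop:WP}, while the contribution from the outer shell $[\ell,\ell+\epsilon]$ converges to $-2\pi\ell\,\bar D(\ell)$ by continuity of $\bar D$. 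For the RHS of \eqref{eq:KHMstat}, one has $\Delta\eta : \Gamma = \Delta\phi_\epsilon(|y|)\,\tr\Gamma$, $\eta:\da = \phi_\epsilon\tr\da$, and $\eta:a = \phi_\epsilon\tr a$; integrating by parts in the viscous term and passing to the limit produces volume integrals of $\Delta\tr\Gamma$, $\tr\da$, and $\tr a$ over $\{|y|\leq \ell\}$. Dividing through by $-2\pi\ell$ and using $\fint_{|y|\leq \ell} = (\pi\ell^2)^{-1}\int_{|y|\leq \ell}$ (which introduces an extra factor of $\ell$) yields the first displayed identity.

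The second identity then follows from the first by rewriting each volume integral in terms of the spherically averaged correlators. The divergence theorem gives $\int_{|y|\leq \ell}\Delta\tr\Gamma\,\dy = 2\pi\ell\,\bar\Gamma'(\ell)$, and polar coordinates give $\int_{|y|\leq \ell}\tr\da\,\dy = 2\pi\int_0^\ell r\,\bar{\da}(r)\,\dr$ and likewise for $\tr a$; substituting these expressions and simplifying produces the second form. The main technical obstacle is the test-function restriction in Proposition \ref{prop:KHM-vel} requiring $\phi$ to be supported away from the origin, which forces the $\epsilon$-regularization together with careful estimates of the inner boundary contributions; this is handled using the cubic vanishing $|\delta_y u|^2\delta_y u\lesssim |y|^3\|u\|_{C^1}^3$ combined with the uniform $C^4$ regularity of $\Gamma$, $\da$, and $a$ guaranteed by Assumption~\ref{cond:g} and Proposition~\ref{prop:WP}.
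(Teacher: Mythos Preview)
Your proposal is correct and follows precisely the route the paper intends. The paper does not spell out a proof of Lemma~\ref{lem:S0-Sphere}, only indicating that it follows from Proposition~\ref{prop:KHM-vel} ``similar to Lemma~\ref{lem:Yag-KHM-Sphere}''; since the velocity KHM relation is stated in weak (tested) form to avoid pressure contributions, your choice of an approximate indicator test tensor $\eta = \phi_\epsilon(|y|)I$ together with the limiting argument is exactly the natural implementation, and the second identity via the divergence theorem and polar coordinates is immediate.

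One very minor remark: your $\mathcal{O}(\epsilon^2)$ estimate for the inner-layer contribution is overly generous---since $r\bar D(r)=\mathcal O(r^4)$ and $\int|\phi_\epsilon'|=1$, the contribution is in fact $\mathcal O(\epsilon^4)$, or more simply $\sup_{r\leq\epsilon}|r\bar D(r)|\to 0$---but this has no bearing on the argument.
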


Finally, we can also write a formula for the so-called longitudinal structure function
\[
	\bar{D}_{||}(\ell) := \EE\fint_{\S} \fint_{\T^2_\lambda}  (\delta_{\ell n} u(x)\cdot n)^3 \, \dee x \dee n, 
\]
in terms of $\bar{D}(\ell)$ and longitudinal versions of the correlation functions
\begin{align}
\bar{\Gamma}_{||}(\ell) &:= \fint_{\S}(n\tensor n) :\Gamma(\ell n)\dn,\label{eq:Gamlongdef}\\
\bar{\da}_{||}(\ell) &:= \fint_{\S}(n\tensor n) : G(\ell n)\dn,\label{eq:Glongdef}\\
\bar{a}_{||}(\ell) &:= \fint_{\S}(n\tensor n): a(\ell n)\dn.\label{eq:alongdef}
\end{align}
% \begin{align}
% \bar{\Gamma}_{||}(\ell) &:= \E \fint_{\S}\fint_{\T^2_\lambda} (u(x)\cdot n )(u(x+\ell n)\cdot n)\dx\dn\\
% \bar{G}_{||}(\ell) &:= \E\fint_{\S}\fint_{\T^2_\lambda} ((-\Delta)^{-\gamma}u(x)\cdot n )((-\Delta)^{-\gamma}u(x+\ell n)\cdot n)\dx\dn\\
% \bar{a}_{||}(\ell) &:= \frac{1}{2}\sum_{j}\E\fint_{\S}\fint_{\T^2_\lambda} (g^\lambda_j(x)\cdot n )(g^{\lambda}_j(x+\ell n)\cdot n)\dx\dn
% \end{align}

\begin{lemma}\label{lem:khmparallel}
The following identity holds for each $\ell >0$
\begin{align}
	\bar{D}_{||}(\ell) &= - 4\nu \overline\Gamma_{||}^\prime(\ell) +\frac{2}{\ell^3}\int_0^\ell r^2 \bar{D}(r)\dr + \frac{2\alpha}{\ell} \fint_{|y|\leq \ell} h\tensor h: \da(y)\dee y - \frac{2}{\ell}\fint_{|y|\leq \ell} y\tensor y :  a(y)\dee y \\
	&= - 4\nu \overline\Gamma_{||}^\prime(\ell) +\frac{2}{\ell^3}\int_0^\ell r^2 \bar{D}(r)\dr + \frac{4\damp}{\ell^3} \int_0^\ell r^3 \bar{\da}_{||}(r)\dr - \frac{4}{\ell^3}\int_0^\ell r^3 \bar{a}_{||}(r)\dr.
\end{align}
% where
% \[
% 	\bar{\Gamma}_{||}'(\ell) = \E\fint_{\S}\left(\fint_{\T_{\lambda}^2}(n\cdot u)(n\tensor n:\nabla u(x + n\ell ))\,\dx\right)\dee S(n).
% \]
\end{lemma}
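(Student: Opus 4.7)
The plan is to apply Proposition~\ref{prop:KHM-vel} with the test function $\eta(y) = \chi(|y|)\, y \otimes y$, where $\chi$ is smooth, non-negative, compactly supported in $[0,\infty)$, and approximates $\mathbf{1}_{[0,\ell]}$. This has the allowed form $\phi(|y|) I + \varphi(|y|) \hat y \otimes \hat y$ with $\phi \equiv 0$ and $\varphi(r) = r^2 \chi(r)$. All limits below are justified by the $C^4$-regularity of $\Gamma$, $\da$, $a$ coming from Proposition~\ref{prop:WP} and Assumption~\ref{cond:g}.

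First I would compute the LHS of \eqref{eq:KHMstat}. Using the full symmetry of $D^j_{ik}(y)$ in $(i,j,k)$, the Leibniz rule yields
\[
\sum_{j=1}^2 \partial_j \eta(y) : D^j(y) = \chi'(|y|)\, |y|^2\, \E\fint_{\T^2_\lambda}(\delta_y u \cdot \hat y)^3 \dx + 2\chi(|y|)\, |y|\, \E\fint_{\T^2_\lambda} |\delta_y u|^2 (\delta_y u \cdot \hat y) \dx.
\]
Converting to polar coordinates and taking $\chi \to \mathbf{1}_{[0,\ell]}$ produces $-2\pi \ell^3 \bar D_{||}(\ell) + 4\pi \int_0^\ell r^2 \bar D(r) \dr$. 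For the RHS, the Leibniz rule gives $\Delta(\chi(|y|) y_i y_k) = 2\chi(|y|) \delta_{ik} + (5|y|\chi'(|y|) + |y|^2\chi''(|y|)) \hat y_i \hat y_k$; contracting with $\Gamma_{ik}$, passing to polar coordinates, integrating by parts in $r$ to move derivatives of $\chi$ onto the correlators, and taking the same limit yields
\[
4\nu \int_{\R^2} \Delta \eta : \Gamma\, \dy \longrightarrow 8\pi\nu \ell^3 \bar\Gamma_{||}'(\ell) - 16\pi\nu \ell^2 \bar\Gamma_{||}(\ell) + 16\pi\nu \int_0^\ell r \bar\Gamma(r)\, \dr.
\]
The damping and noise contributions are immediate in polar form: $-4\alpha \int \eta:\da\,\dy \to -8\pi\alpha \int_0^\ell r^3 \bar{\da}_{||}(r)\dr$ and $4\int \eta:a\,\dy \to 8\pi\int_0^\ell r^3 \bar{a}_{||}(r)\dr$. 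Assembling and dividing by $2\pi \ell^3$ produces the second stated identity modulo the parasitic viscous contribution $\tfrac{8\nu}{\ell} \bar\Gamma_{||}(\ell) - \tfrac{8\nu}{\ell^3} \int_0^\ell r \bar\Gamma(r)\, \dr$. The first form of the lemma follows from the second by rewriting $\int_0^\ell r^3 \bar{(\cdot)}_{||}(r)\,\dr = \tfrac{1}{2\pi}\int_{|y|\leq\ell}(y\otimes y):(\cdot)(y)\,\dy$ and unfolding $\fint$.

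The main obstacle is the incompressibility identity
\[
\ell^2 \bar\Gamma_{||}(\ell) = \int_0^\ell r\, \bar\Gamma(r)\, \dr, \qquad \ell > 0,
\]
needed to eliminate the parasitic terms. My plan is to prove it by applying $\Delta$ to the scalar $\Phi(y) := \Gamma_{ij}(y)\, y_i y_j$: the Leibniz rule combined with the two divergence-free relations $\partial_{y_j} \Gamma_{ij}(y) = 0$ and $\partial_{y_i} \Gamma_{ij}(y) = 0$ (the first from $\nabla\cdot u = 0$ and translation-invariance, the second from the symmetry $\Gamma_{ij}(y) = \Gamma_{ji}(-y)$) produces $\Delta \Phi = (y\otimes y): \Delta \Gamma + 2 \tr \Gamma$. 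Integrating over $B_\ell$, applying Gauss's theorem on the left to get $2\pi\ell^2\bigl(2\bar\Gamma_{||}(\ell) + \ell \bar\Gamma_{||}'(\ell)\bigr)$, and substituting the polar-coordinate evaluation of $\int_{B_\ell}(y\otimes y):\Delta\Gamma\,\dy$ already computed in the previous step, the identity falls out after algebraic simplification. Substituting back kills the parasitic terms and closes the proof.
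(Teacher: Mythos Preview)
Your argument is correct. The choice of test function $\eta(y)=\chi(|y|)\,y\otimes y$, the Leibniz computations for $\partial_j\eta:D^j$ and $\Delta\eta:\Gamma$, the polar reductions, and the incompressibility identity $\ell^2\bar\Gamma_{||}(\ell)=\int_0^\ell r\,\bar\Gamma(r)\,\dr$ (which you obtain by integrating $\Delta(\Gamma_{ij}y_iy_j)$ over $B_\ell$ and invoking $\partial_{y_i}\Gamma_{ij}=\partial_{y_j}\Gamma_{ij}=0$) all check out; the parasitic viscous terms cancel exactly as you claim. One minor point: Proposition~\ref{prop:KHM-vel} formally asks for $\phi,\varphi$ supported in $(0,\infty)$, whereas your $\varphi(r)=r^2\chi(r)$ need not vanish near $0$ until you also cut off there; but since $\varphi(r)=O(r^2)$ the limit removing that cutoff is harmless and this is only a cosmetic issue.

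The paper itself does not supply a proof of Lemma~\ref{lem:khmparallel}: like the surrounding KHM identities, it is stated with the proof deferred implicitly to the analogous 3d computations in \cite{BCZPSW18}. Your approach --- inserting a radially symmetric rank-two test function into the weak KHM relation and exploiting incompressibility to close the identity --- is precisely the standard route used there, so there is no meaningful methodological difference.
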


\subsection{Large-scale cancellation lemmas}
The following lemma provides weak conditions necessary to prove that certain integrals of two-point correlation functions vanish at large scales.
This can be viewed as a cancellation lemma.
\begin{lemma}[Large scale cancellations] \label{lem:LrgCan1}
Let $\set{f^{\lambda}}_{\lambda \geq 1}$ be a sequence of random scalars (depending potentially also on $\alpha$ and $\nu$) and define
\begin{align}
F^\lambda(y) = \EE \fint_{\TT^2_\lambda}f^\lambda(x+y) f^\lambda(x) \dx. 
\end{align}
Suppose that the following two conditions hold (uniformly in $\nu$):
\begin{subequations}
\begin{align}
\lim_{\delta \to 0} \sup_{\lambda \geq 1} \EE \norm{f^\lambda_{\leq \delta}}_{\lambda}^2 & = 0, \label{c:EquiInt} \\
\sup_{\lambda \geq 1} \EE\norm{f^\lambda}_\lambda^2 & < \infty.  \label{c:apBd}
\end{align}
\end{subequations}
Then, 
\begin{align}
\lim_{\ell_I\to \infty}\sup_{\lambda} \sup_{\ell \in (\ell_I, \frac{1}{2}\lambda)} \abs{ \fint_{\{|y|\leq \ell\}} F^\lambda (y) \dy} = 0. 
\end{align}
\end{lemma}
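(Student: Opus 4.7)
The plan is to pass to Fourier space and exploit the decay of the Fourier transform of the normalized indicator of a disc. The equi-integrability hypothesis \eqref{c:EquiInt} handles the low-frequency part of the resulting sum, while the a priori bound \eqref{c:apBd} combined with the Fourier decay handles the tail.

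First, using the Fourier conventions of Section~\ref{sec:Note}, one writes
\[
F^\lambda(y) = \frac{1}{\lambda^4} \sum_{k \in \Z^2_\lambda} \E|\widehat{f^\lambda}(k)|^2 \, \e^{iy \cdot k},
\]
so that
\[
\fint_{|y|\leq \ell} F^\lambda(y) \, \dy = \frac{1}{\lambda^4} \sum_{k \in \Z^2_\lambda} \E|\widehat{f^\lambda}(k)|^2 \, M_\ell(|k|),
\qquad M_\ell(r) := \fint_{|y|\leq \ell} \e^{iy \cdot k}\, \dy,
\]
which is well-defined on the torus since $\ell < \lambda/2$, and depends only on $r=|k|$ by rotational symmetry (with $M_\ell(0)=1$). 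The two properties we need are the trivial bound $|M_\ell(r)| \leq 1$ and the decay $|M_\ell(r)| \leq C(\ell r)^{-3/2}$ for $r > 0$, both coming from the identity $M_\ell(r) = 2 J_1(\ell r)/(\ell r)$ and the standard asymptotic $|J_1(x)| \lesssim x^{-1/2}$ for $x\geq 1$.

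Next, fix $\epsilon > 0$. By \eqref{c:EquiInt}, choose $\delta = \delta(\epsilon) > 0$ such that $\sup_{\lambda \geq 1} \E\|f^\lambda_{\leq \delta}\|_\lambda^2 < \epsilon$; importantly $\delta$ depends on neither $\ell$ nor $\lambda$. Split the Fourier sum at the cutoff $|k|=\delta$. For the low-frequency piece, $|M_\ell| \leq 1$ and Parseval give
\[
\frac{1}{\lambda^4}\left|\sum_{|k|\leq \delta} \E|\widehat{f^\lambda}(k)|^2 M_\ell(|k|)\right| \leq \E \|f^\lambda_{\leq \delta}\|_\lambda^2 < \epsilon.
\]
For the high-frequency piece, $|k| > \delta$ yields $|M_\ell(|k|)| \leq C(\ell \delta)^{-3/2}$, so together with \eqref{c:apBd} and Parseval,
\[
\frac{1}{\lambda^4}\left|\sum_{|k|>\delta} \E|\widehat{f^\lambda}(k)|^2 M_\ell(|k|)\right| \leq \frac{C}{(\ell \delta)^{3/2}} \E\|f^\lambda\|_\lambda^2 \leq \frac{CM}{(\ell \delta)^{3/2}},
\]
where $M := \sup_{\lambda \geq 1} \E\|f^\lambda\|_\lambda^2 < \infty$.

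Taking $\ell_I$ large enough so that $CM(\ell_I \delta)^{-3/2} < \epsilon$ then gives, for all $\lambda$ and all $\ell \in (\ell_I, \lambda/2)$, the total bound $2\epsilon$, proving the claim. The only nontrivial step is the quantitative decay of $M_\ell$, but this is standard; moreover any decay rate strictly better than $(\ell r)^{-1}$ would suffice, so the argument is robust. The ordering of the limits is critical: we must first fix $\delta$ based solely on the equi-integrability (uniform in $\lambda$), and only then send $\ell_I \to \infty$, since the high-frequency tail estimate depends on $\delta$.
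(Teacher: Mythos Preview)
Your proof is correct and follows essentially the same route as the paper: pass to Fourier space, identify the kernel as $2J_1(\ell|k|)/(\ell|k|)$, use the Bessel asymptotic to get the $(\ell|k|)^{-3/2}$ decay, and split the frequency sum into a low piece controlled by \eqref{c:EquiInt} and a high piece controlled by \eqref{c:apBd}. The only cosmetic difference is the choice of cutoff: the paper splits at the $\ell$-dependent scale $|k|=\ell^{-1/2}$ (yielding the single-line bound $\E\|f^\lambda_{\leq \ell^{-1/2}}\|_\lambda^2 + \ell^{-3/4}\E\|f^\lambda\|_\lambda^2$), whereas you fix $\delta$ from the equi-integrability first and then send $\ell_I\to\infty$; both are standard and equivalent here.
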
 
\begin{proof}
By Fourier analysis and Fubini, 
\begin{align}
\frac{1}{\ell^2} \int_{\{|y|\leq \ell\}} F^\lambda (y) \dy = \sum_{k \in \mathbb Z^2_\lambda} \Phi_\ell(k) \E|\hat{f}^\lambda(k)|^2, 
\end{align}
where 
\begin{align}
	\Phi_\ell(\xi) := \frac{1}{\ell^2}\int_{\{|y|\leq \ell\}} \e^{i\xi\cdot y}\dy.
\end{align}
Then, observe that (denoting $\xi =  \abs{\xi} (\cos \phi,\sin \phi)$ and $y = \abs{y}(\cos \theta, \sin \theta)$)
\begin{align}
\Phi_\ell(\xi) & = \frac{1}{\ell^2} \int_0^{\ell} \int_0^{2\pi} \e^{i \abs{\xi} r \cos(\theta - \phi)} r \dd\theta \dr \\
& = \frac{1}{\ell^2} \int_0^{\ell} \int_0^{2\pi} \e^{i \abs{\xi} r \cos(\theta)} r \dd\theta \dr \\
%& = \frac{1}{\ell^2} \int_0^{\ell} J_0(\abs{\xi} r) r \dr \\
& = \frac{1}{\ell^2 \abs{\xi}^2} \int_0^{\ell \abs{\xi}} J_0(z) z \dd z \\
& = \frac{1}{\ell \abs{\xi}} J_1(\ell \abs{\xi}), 
\end{align}
where $J_p(z)$ denotes the Bessel functions of the first kind (see e.g. \cite{AS}) for the identity used in the penultimate line). 
% then, 
% \begin{align}
% \int_0^{\ell \abs{\xi}} J_0(z) z dz = \ell \abs{\xi} J_1(\ell \abs{\xi}),
% \end{align}
By standard results regarding the asymptotics of Bessel functions \cite{AS} it follows that 
\begin{align}
\abs{\Phi_\ell(\xi)} & \lesssim \min\left(1, \frac{1}{\ell^{3/2}\abs{\xi}^{3/2}}\right). 
\end{align}
Hence, assuming $\ell >1$ and splitting into the regions where $ |k| > \ell^{-1/2}$ and $|k| < \ell^{-1/2}$ gives 
\begin{align}
\frac{1}{\ell^2} \abs{ \int_{\{|y|\leq \ell\}} F^\lambda (y) \dy} & \lesssim \sum_{k \in \mathbb Z^2_\lambda}\1_{\abs{k} \leq \ell^{-\frac{1}{2}}} \E|\hat{f}^\lambda(k)|^2  %+ \sum_{k \in \mathbb Z^2_\lambda :1 \geq \abs{k} \geq \delta} \frac{1}{\brak{\ell^{3/2} \delta^{3/2}}} \abs{\widehat{f}^\lambda(k)}^2 \\
 + \sum_{k \in \mathbb Z^2_\lambda}\1_{\abs{k} \geq \ell^{-\frac{1}{2}}} \frac{1}{(\ell |k|)^{3/2}} \E|\hat{f}^\lambda(k)|^2\\
 &\leqc \E\|f_{\leq \ell^{-1/2}}^\lambda\|_{\lambda}^2 + \frac{1}{\ell^{3/4}}\E\|f^\lambda\|_{\lambda}^2.
\end{align}
Taking the supremum over $\ell \in (\ell_I, \frac{1}{2}\lambda)$ yields
\[
	\sup_\lambda \sup_{\ell \in (\ell_I,\frac{1}{2}\lambda)}\frac{1}{\ell^2} \abs{ \int_{\{|y|\leq \ell\}} F^\lambda (y) \dy} \leqc \sup_{\lambda}\E\|f^\lambda_{\leq \ell_I^{-1/2}}\|_{\lambda}^2 + \frac{1}{\ell_I^{3/4}}\sup_{\lambda}\E\|f^\lambda\|_{\lambda}^2.
\]
Therefore the result follows by assumptions \eqref{c:EquiInt} and \eqref{c:apBd}.
% Given any $\zeta > 0$, by assumption \eqref{c:EquiInt} $\exists \delta$ and $\lambda_0$ such that for all $\lambda > \lambda_0$ we have
% \begin{align}
% \sum_{k \in \mathbb Z^2_\lambda : \abs{k} \leq \delta} \abs{\widehat{f}^\lambda(k)}^2 < \frac{\zeta}{2}. 
% \end{align}
% Having chosen $\delta$, by \eqref{c:apBd} we can choose $\ell_I$ such that for all $\ell > \ell_I$ we have
% \begin{align}
% \sum_{k \in \mathbb Z^2_\lambda : \abs{k} \geq \delta} \frac{1}{\brak{\ell k}^{3/2}} \abs{\widehat{f}^\lambda(k)}^2 < \frac{\zeta}{2}, 
% \end{align}
% hence the result follows. 
% It follows that,
% \begin{equation}
% \begin{aligned}
% 	\lim_{\alpha \to 0}\sup_{\ell\in (\ell_I,\ell_\alpha)}\left|\frac{1}{\ell^2}\int_{\{|y|\leq \ell\}} a_{\lambda}(y)\dy\right| &\leq \lim_{\lambda \to \infty}\sup_{\ell\in (\ell_I,\infty)}\left|\int_{\R^2} \Phi_\ell(\xi)\dee \hat K_\lambda(\xi)\right|\\
% & \lesssim \lim_{\lambda \to \infty} \sup_{\ell\in (\ell_I,\infty)}\left( \left|\int_{\abs{\xi} \leq \ell^{-1}} 1 \dee \hat K_\lambda(\xi)\right| + \frac{1}{\ell^{3/2}}\left|\int_{\abs{\xi} > \ell^{-1}} \frac{1}{\brak{\xi}^{3/2}} \dee \hat K_\lambda(\xi)\right| \right). 
% \end{aligned}
% \end{equation}
\end{proof} 

We will also need the following tensor generalization when $\{f_{\lambda}\}$ are vector valued.
\begin{lemma}\label{lem:LrgCan2}
Let $\set{f^{\lambda}}_{\lambda \geq 1}$ be a sequence of random divergence free vector fields on $\T^2_\lambda$ and define
\begin{align}
F^\lambda(y) = \EE \fint_{\TT^2_\lambda}f^\lambda(x+y)\tensor f^\lambda(x) \dx. 
\end{align}
Suppose that the following two conditions hold (uniformly in $\alpha$ and $\nu$):
\begin{subequations}
\begin{align}
\lim_{\delta \to 0} \sup_{\lambda} \EE \norm{f^\lambda_{\leq \delta}}_{\lambda}^2  = 0, \qquad \sup_{\lambda} \EE\norm{f^\lambda}_\lambda^2 < \infty. 
\end{align}
\end{subequations}
Then, 
\begin{align}
\lim_{\ell_I\to \infty} \sup_{\lambda}\sup_{\ell \in (\ell_I, \frac{1}{2}\lambda)} \frac{1}{\ell^2} \abs{ \fint_{\{|y|\leq \ell\}} y\tensor y:F^\lambda (y) \dy} = 0. 
\end{align}
\end{lemma}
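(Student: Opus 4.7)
I would proceed along exactly the lines of Lemma \ref{lem:LrgCan1}, expanding $F^\lambda$ in Fourier series on $\T^2_\lambda$ and bounding the resulting multiplier; the new twist is that this multiplier is matrix-valued, and I would collapse it back to a scalar by invoking the divergence-free assumption on $f^\lambda$. Writing $F^\lambda$ via its Fourier coefficients and exchanging the sum with the spatial integral gives, up to a harmless absolute constant,
\begin{align}
\frac{1}{\ell^2}\fint_{\{|y|\leq \ell\}} y\otimes y : F^\lambda(y) \, \dy \;=\; \sum_{k \in \Z_\lambda^2} \Theta_\ell(k) : \widehat{F^\lambda}(k), \qquad \Theta_\ell(k)_{ij} \;\propto\; \frac{1}{\ell^4}\int_{\{|y|\leq \ell\}} y_i y_j \e^{iy\cdot k}\, \dy.
\end{align}
By $O(2)$-equivariance this multiplier decomposes as $\Theta_\ell(k) = A(\ell|k|)\, I + B(\ell|k|)\, \hat k \otimes \hat k$ for some scalar functions $A,B$. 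Since $f^\lambda$ is divergence free, the autocorrelation satisfies $\widehat{F^\lambda}(k)\,k = 0$, and consequently the $B$-part is killed upon contraction, leaving
\begin{align}
\Theta_\ell(k) : \widehat{F^\lambda}(k) \;=\; A(\ell|k|)\, \tr \widehat{F^\lambda}(k) \;=\; A(\ell|k|)\, \EE|\hat f^\lambda(k)|^2.
\end{align}

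Next I would compute $A$ by contracting $\Theta_\ell$ with $I - \hat k \otimes \hat k$, reducing it after rescaling $y \mapsto \ell y$ to (a multiple of) $\int_{\{|y|\leq 1\}} |y_\perp|^2 \e^{iy\cdot \xi}\,\dy$, where $y_\perp$ is the component of $y$ perpendicular to $\xi = \ell k$. Evaluating this in polar coordinates aligned with $\xi$, the angular identity $\int_0^{2\pi}\sin^2\theta\,\e^{i\rho\cos\theta}\,\dee\theta = 2\pi J_1(\rho)/\rho$ followed by the radial identity $\int z^2 J_1(z)\,\dee z = z^2 J_2(z)$ yields $A(|\xi|) \propto J_2(|\xi|)/|\xi|^2$. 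Standard Bessel asymptotics, namely $J_2(z) \sim z^2/8$ as $z\to 0$ and $|J_2(z)| \lesssim z^{-1/2}$ as $z\to\infty$, then give
\begin{align}
|A(\ell|k|)| \;\lesssim\; \min\bigl(1,\, (\ell|k|)^{-5/2}\bigr),
\end{align}
which is in fact \emph{better} than the $(\ell|k|)^{-3/2}$ decay used in Lemma \ref{lem:LrgCan1}; the improvement is due to the vanishing of $y_\perp$ at the boundary stationary points where $y \parallel k$.

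Splitting the sum at $|k| = \ell^{-1/2}$ exactly as in the scalar case then produces
\begin{align}
\left|\frac{1}{\ell^2}\fint_{\{|y|\leq \ell\}} y\otimes y : F^\lambda(y)\,\dy\right| \;\lesssim\; \EE \norm{f^\lambda_{\leq \ell^{-1/2}}}_\lambda^2 \;+\; \ell^{-5/4}\, \EE \norm{f^\lambda}_\lambda^2,
\end{align}
and taking the supremum over $\lambda \geq 1$ and $\ell \in (\ell_I,\tfrac{1}{2}\lambda)$, then sending $\ell_I \to \infty$, kills both terms by the equi-integrability and uniform $L^2$ hypotheses. The only genuinely new ingredient relative to Lemma \ref{lem:LrgCan1} is the tensorial reduction via divergence-freeness; the main (mild) obstacle I foresee is simply keeping the Bessel-function computation of $A$ organized, but no new analytic difficulty arises, and the endgame is an essentially verbatim transcription of the scalar splitting argument.
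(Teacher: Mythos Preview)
Your proposal is correct and follows essentially the same approach as the paper: Fourier expand, decompose the matrix multiplier $\Psi_\ell(k)$ into its $I$ and $\hat k\otimes\hat k$ parts, kill the latter via divergence-freeness, identify the surviving scalar as $J_2(\ell|k|)/(\ell|k|)^2$, and then rerun the splitting from Lemma~\ref{lem:LrgCan1}. The only cosmetic difference is in how the Bessel computation is organized: the paper observes $\Psi_\ell = -\ell^{-2}\nabla^2\Phi_\ell$ and differentiates the known formula $\Phi_\ell(\xi)=(\ell|\xi|)^{-1}J_1(\ell|\xi|)$ via the recurrence $\frac{1}{z}\frac{\dee}{\dz}(z^{-p}J_p)= -z^{-p-1}J_{p+1}$, whereas you compute $A$ directly by contracting with $I-\hat k\otimes\hat k$ and integrating in polar coordinates; both routes land on the same multiplier and the same $\min(1,(\ell|k|)^{-5/2})$ bound.
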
 
\begin{proof}
The proof is a simple modification of the previous Lemma \ref{lem:LrgCan1}. By Fourier analysis
\[
	\frac{1}{\ell^4} \int_{\{|y|\leq \ell\}} y \tensor y:F^\lambda (y) \dy = \sum_{k\in \Z^2_\lambda}\E \langle \hat{f}^\lambda(k),\Psi_\ell(k)\hat{f}^\lambda(k)\rangle,
\]
where
\[
	\Psi_\ell(\xi) = \frac{1}{\ell^4} \int_{|y|\leq \ell} y\tensor y\, \e^{i\xi\cdot y}\dy.
\]
It is important to note that $\Psi_\ell(\xi)$ can be related to $\Phi_\ell(\xi)$ from the proof of Lemma \ref{lem:LrgCan1} by
\[
	\Psi_\ell(\xi) = - \frac{1}{\ell^2}\nabla^2\Phi_\ell(\xi).
\]
Using the standard identity for Bessel functions $\frac{1}{z}\frac{\dee}{\dz}(z^{-p}J_p(z)) = z^{-p-1}J_{p+1}(z)$ allows us to deduce
\[
	\Psi_\ell(\xi) = - \frac{1}{\ell^2}\nabla^2\left(\frac{1}{\ell|\xi|}J_1(\ell|\xi|))\right) = \frac{1}{(\ell|\xi|)^2}J_2(\ell|\xi|) I - \frac{\ell^2}{(\ell|\xi|)^3}J_3(\ell|\xi|) \xi\tensor \xi.
\]
Using the fact that $f^\lambda(x)$ are divergence free and therefore $k\cdot\hat{f}^\lambda(k) =0$, we find
\[
	\langle \overline{\hat{f}^\lambda}(k),\Psi_\ell(k)\hat{f}^\lambda(k)\rangle = \frac{1}{(\ell|k|)^2}J_2(\ell|k|)|\hat{f}^\lambda(k)|^2.
\]
The proof now follows exactly as in Lemma \ref{lem:LrgCan1} using the asymptotic
\[
	\frac{1}{(\ell|\xi|)^2}|J_2(\ell|\xi|)| \leqc \min\left(1,\frac{1}{\ell^{5/2}|\xi|^{5/2}}\right).
\]
\end{proof}

\section{The direct cascade} \label{sec:Direct}
This section is devoted to the proof of part (i) of Theorem \ref{thm:InformalDual}, whose precise statement is given in the theorem below.

\begin{theorem} \label{thm:direct}
Suppose that $\lambda = \lambda(\alpha) < \infty$ is a continuous monotone increasing function such that $\displaystyle\lim_{\alpha \to 0} \lambda = \infty$.
Let $\set{u}_{\nu,\alpha > 0}$ a sequence of statistically stationary solutions such that Definition~\ref{def:WAD} holds. Then there exists $\ell_\nu \in (0,1)$ satisfying $\displaystyle\lim_{\nu \to 0} \ell_\nu = 0$ such that
\begin{subequations}\label{eq:YLlaws} 
\begin{align}
& \lim_{\ell_I \to 0}  \limsup_{\nu,\alpha \to 0}  \sup_{\ell \in [\ell_\nu, \ell_I]}\abs{\frac{1}{\ell}\EE \fint_{\mathbb S} \fint_{\TT_\lambda^2} \abs{\delta_{\ell n} \omega}^2 \delta_{\ell n} u \cdot n \, \dee x \dee n + 2\eta } = 0, \label{eq:Yaglom1}\\
& \lim_{\ell_I \to 0}  \limsup_{\nu,\alpha \to 0} \sup_{\ell \in [\ell_\nu, \ell_I]}\abs{\frac{1}{\ell^3}\EE \fint_{\mathbb S} \fint_{\TT_\lambda^2} \abs{\delta_{\ell n} u}^2 \delta_{\ell n} u \cdot n \, \dee x \dee n -\frac{1}{4}\eta } = 0, \label{eq:Lindborg1}\\
& \lim_{\ell_I \to 0}  \limsup_{\nu,\alpha \to 0}  \sup_{\ell \in [\ell_\nu, \ell_I]}\abs{\frac{1}{\ell^3}\EE \fint_{\mathbb S} \fint_{\TT_\lambda^2} \left(\delta_{\ell n} u \cdot n\right)^3 \, \dee x \dee n - \frac{1}{8}\eta } = 0. \label{eq:Lindborg3}
\end{align}
\end{subequations} 
In fact, it suffices to choose $\ell_\nu \to 0$ satisfying
\begin{align}
\sup_{\alpha \in (0,1)}\left(\nu \EE \norm{\omega}_{\lambda}^2\right)^{1/2} = o_{\nu \to 0}(\ell_\nu). \label{def:ellnu}
\end{align}
\end{theorem}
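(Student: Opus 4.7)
The strategy is to apply the three K\'arm\'an--Horvath--Monin (KHM) identities (Lemmas~\ref{lem:Yag-KHM-Sphere}, \ref{lem:S0-Sphere}, \ref{lem:khmparallel}), in each case isolating the noise two-point correlator as the source of the limiting constant while showing the viscous and damping contributions vanish uniformly on $\ell \in [\ell_\nu,\ell_I]$. For \eqref{eq:Yaglom1}, dividing Lemma~\ref{lem:Yag-KHM-Sphere} by $\ell$: the noise term expands as $-\frac{4}{\ell^2}\int_0^\ell r\bar{\mathfrak{a}}(r)\,dr = -2\bar{\mathfrak{a}}(0) + O(\ell^2) = -2\eta + O(\ell^2)$ with uniformity in $\lambda$ from Assumption~\ref{cond:g}; the damping is bounded by $2\alpha\EE\norm{(-\Delta)^{-\gamma}\omega}_\lambda^2 \to 0$ via \eqref{eq:WADb}. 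The viscous term is controlled using $\nabla \mathfrak{B}(y) = -\EE\fint \delta_y\omega\,\nabla\omega(x+y)\,dx$ (obtained by integration by parts and $\fint \omega\,\nabla\omega\,dx = 0$) and Cauchy--Schwarz, yielding
\[
  |\nu \bar{\mathfrak{B}}'(\ell)/\ell| \;\lesssim\; \sqrt{\eta}\,\sqrt{\sup_{\alpha} \nu\EE\norm{\omega}_\lambda^2}\,/\ell,
\]
which is $o(1)$ uniformly on $\ell \geq \ell_\nu$ by \eqref{def:ellnu}.

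For \eqref{eq:Lindborg1}, divide Lemma~\ref{lem:S0-Sphere} by $\ell^3$ and split $\bar a(r) = \bar a(0) + [\bar a(r) - \bar a(0)]$ and analogously for $\bar G(r)$. The $\bar F(0)$ pieces produce $1/\ell^2$ singularities whose sum equals $-2(\varepsilon - \alpha\EE\norm{(-\Delta)^{-\gamma}u}_\lambda^2)/\ell^2 = -2\nu\EE\norm{\nabla u}_\lambda^2/\ell^2$ by the energy balance \eqref{eq:ENERbal}; together with the viscous $-4\nu\bar\Gamma'(\ell)/\ell^3$ this is $O(\nu\EE\norm{\omega}_\lambda^2/\ell^2) = o(1)$ on $\ell \geq \ell_\nu$ via the same Cauchy--Schwarz/Poincar\'e bound as in Yaglom. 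The damping correction is bounded by $\tfrac12 \alpha\EE\norm{(-\Delta)^{-\gamma}\omega}_\lambda^2 \to 0$ using $|\bar G(r) - \bar G(0)| \leq \tfrac{r^2}{2}\EE\norm{(-\Delta)^{-\gamma}\omega}_\lambda^2$. The noise correction Taylor-expands to $\eta/4 + O(\ell^2)$ via $\Delta\tr a(0) = -\eta$ (which follows from div-freeness of $g^\lambda_j$ and $\norm{\nabla g}_\lambda = \norm{\mathrm{curl}\,g}_\lambda$ in 2D), with uniform remainder from Assumption~\ref{cond:g}.

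For \eqref{eq:Lindborg3}, the cleanest route is to substitute Lemma~\ref{lem:S0-Sphere} for $\bar D(r)$ into Lemma~\ref{lem:khmparallel}, using integration by parts on $\int_0^\ell r^2\bar\Gamma'(r)\,dr$ and Fubini on the nested damping/noise integrals. This produces a representation of $\bar D_\parallel(\ell)/\ell^3$ entirely in terms of $\bar\Gamma, \bar G, \bar a, \bar\Gamma_\parallel, \bar G_\parallel, \bar a_\parallel$, in which the $\bar\Gamma(0) = \EE\norm{u}_\lambda^2$ contributions from the two viscous pieces cancel identically; what survives in the viscous sector are $\bar\Gamma_\parallel'(\ell)$, a difference $\bar\Gamma(\ell)-\bar\Gamma(0)$, and a weighted integral thereof, each of size $O(\nu\EE\norm{\omega}_\lambda^2/\ell^2) = o(1)$ by the Cauchy--Schwarz/Poincar\'e bound. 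Taylor-expanding the noise integrals---with $\Delta\tr a(0) = -\eta$ and $\bar a_\parallel''(0) = -\eta/8$, the latter computed via $\fint_{\mathbb S} n_in_jn_kn_l\,dn = \tfrac{1}{8}(\delta_{ij}\delta_{kl} + \delta_{ik}\delta_{jl} + \delta_{il}\delta_{jk})$ together with div-freeness---produces a singular $-3\varepsilon/(2\ell^2)$ plus a finite $\eta/8$. The analogous damping expansions give a matching $3\alpha\EE\norm{(-\Delta)^{-\gamma}u}_\lambda^2/(2\ell^2)$ plus a piece bounded by $O(\alpha\EE\norm{(-\Delta)^{-\gamma}\omega}_\lambda^2)$. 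Energy balance \eqref{eq:ENERbal} cancels the $1/\ell^2$ singularities, leaving $-3\nu\EE\norm{\nabla u}_\lambda^2/(2\ell^2) = o(1)$ on $\ell \geq \ell_\nu$, and WAD kills the residual damping piece, giving the target $\eta/8$.

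The heart of the argument is the systematic cancellation of $1/\ell^2$ singularities via the energy balance: the $\bar F(0)$ pieces in damping and noise must pair exactly with the $\nu\EE\norm{\nabla u}_\lambda^2/\ell^2$ coming from viscous derivatives---this pairing is what makes \eqref{def:ellnu} the right scale. The chief obstacle is the longitudinal law, where Lemma~\ref{lem:khmparallel} formally requires control of $\bar D(r)$ all the way down to $r=0$, below the viscous scale where one has no independent estimate; substituting Lemma~\ref{lem:S0-Sphere} back in and exploiting the exact cancellation of the $\bar\Gamma(0)$ contributions bypasses this difficulty, reducing everything to bounds on $\bar\Gamma(\ell) - \bar\Gamma(0)$ and its weighted moments of the same type already used for \eqref{eq:Yaglom1} and \eqref{eq:Lindborg1}.
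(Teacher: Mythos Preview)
Your proposal is correct and, for \eqref{eq:Yaglom1} and \eqref{eq:Lindborg1}, follows essentially the same line as the paper: apply the appropriate KHM identity, use the energy balance \eqref{eq:ENERbal} to cancel the $1/\ell^2$ singularities coming from $\bar a(0)$ and $\alpha\bar G(0)$ against $\nu\EE\norm{\omega}_\lambda^2/\ell^2$, and use \eqref{eq:WADb} to kill the remaining damping correction. Your bookkeeping (grouping the three $O(\nu\EE\norm{\omega}_\lambda^2/\ell^2)$ pieces together) differs cosmetically from the paper's (which exhibits $2\eps/\ell^2$ in both the damping and noise expansions and cancels them directly), but the estimates are identical.

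For \eqref{eq:Lindborg3} the routes diverge slightly. The paper does \emph{not} substitute Lemma~\ref{lem:S0-Sphere} back into Lemma~\ref{lem:khmparallel}; it simply reuses the already-established \eqref{eq:Lindborg1} to evaluate $\tfrac{2}{\ell^6}\int_0^\ell r^2\bar D(r)\,dr \to \eta/12$. Your stated obstacle, that this integral runs below $\ell_\nu$, is not a genuine difficulty for the paper's approach: the proof of \eqref{eq:Lindborg1} in fact yields the pointwise-in-$r$ estimate
\[
\Bigl|\tfrac{\bar D(r)}{r^3} - \tfrac{\eta}{4}\Bigr| \;\lesssim\; \tfrac{\nu\EE\norm{\omega}_\lambda^2}{r^2} \;+\; \alpha\EE\norm{(-\Delta)^{-\gamma}\omega}_\lambda^2 \;+\; O(r)
\]
valid for all small $r$, and the $r^{-2}$ singularity is harmless when integrated against the weight $r^5$ (producing again $O(\nu\EE\norm{\omega}_\lambda^2/\ell^2)$). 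Your substitution-plus-integration-by-parts approach is a legitimate alternative that makes the $\bar\Gamma(0)$ cancellation explicit rather than leaving it buried inside the reused Lindborg estimate; it buys a self-contained treatment at the cost of redoing the expansions, whereas the paper's approach is shorter but relies on unpacking the pointwise content of the earlier proof.
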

The proof of Theorem \ref{thm:direct} is split in three different subsections, one for each of the laws appearing in \eqref{eq:YLlaws}. 
The proofs of \eqref{eq:Lindborg1}--\eqref{eq:Lindborg3} require a different approach compared to the one used
to prove \eqref{eq:Yaglom1}. In particular, we will need to take advantage of certain cancellations that appear in the energy balance. 

\subsection{Proof of \eqref{eq:Yaglom1}} \label{sec:PrfYag1}
This case is the easiest and also the most similar to calculations in \cite{BCZPSW18}, so we will only sketch the main ideas. 
Recalling Lemma \ref{lem:Yag-KHM-Sphere}, we write the equation for the spherically averaged flux $\bar{\mathfrak{D}}(\ell)$ as
\begin{align}
\frac{\bar{\mathfrak{D}}(\ell)}{\ell} =-\frac{4\nu \bar{\mathfrak{B}}^\prime(\ell)}{\ell} + \frac{4\damp}{\ell^2} \int_{0}^\ell r \bar{\mathfrak{G}}(r)\dr - \frac{4}{\ell^2}\int_{0}^\ell r \bar{\mathfrak{a}}(r)\dr.\label{eq:YagBal}
% \frac{2\nu}{\pi\ell^2}\int_{|y|\leq \ell}\Delta \mathfrak{C}(y)\dy  + \frac{2\damp}{\pi\ell^2} \int_{|y|\leq\ell} \mathfrak{G}(y)\dy - \frac{2}{\pi\ell^2}\int_{|y|\leq \ell} \mathfrak{a}(y)\dy. 
\end{align}
The three terms on the RHS are considered in succession. As in \cite{BCZPSW18}, the anomalous dissipation assumptions in Definition \ref{def:WAD} show that the first two terms on the RHS drop out over a suitably defined inertial range. Similarly to \cite{BCZPSW18}, the last term converges to $2\eta$ as $\ell \to 0$.   

\smallskip

\noindent \textbf{Step 1.} Firstly, we prove that for all $\ell_\nu$ satisfying \eqref{def:ellnu}, it holds
	\begin{equation}\label{lem:VanVisYag}
\lim_{\nu \to 0} \sup_{\alpha \in (0,1)} \sup_{\ell \in (\ell_\nu, 1)}  \abs{\frac{4\nu\bar{\mathfrak{B}}^\prime(\ell)}{\ell}} = 0. 
	\end{equation}
% Note that (see \cite{BCZPSW18}), 
% \begin{align}
% 	-\frac{4\nu}{\ell^2}\int_0^\ell\left(r\bar{\mathfrak{B}}''(r)+\bar{\mathfrak{B}}'(r)\right)\dr = -4\nu \frac{\bar{\mathfrak{B}}'(\ell)}{\ell}. 
% \end{align}
% Then, 
% Applying the divergence theorem gives
% $$
% 	\abs{\frac{2\nu}{\pi\ell^2}\int_{|y|\leq \ell}\Delta \mathfrak{B}(y)\dy} = \abs{\frac{2\nu}{\pi\ell^2}\int_{|y|=\ell}\frac{y}{|y|}\cdot\nabla \mathfrak{B}(y)\dee S(y)}\leq  \frac{\nu}{\ell} \sup_{|y|=\ell}|\nabla Q(y)|. 
% $$
Using the enstrophy balance \eqref{eq:ENSTRObal} and the definition of $\bar{\mathfrak{B}}(\ell)$ in  \eqref{eq:Bbardef}
\begin{align}
\sup_{\ell\in (\ell_\nu,1)}\frac{\nu}{\ell} |\bar{\mathfrak{B}}^\prime(\ell)|& \lesssim \frac{1}{\ell_\nu} \left(\nu \EE\norm{\grad \omega}_{\lambda}^2\right)^{1/2} \left( \nu \EE\norm{\omega}_{\lambda}^2 \right)^{1/2} \lesssim \frac{\eta^{1/2}}{\ell_\nu} \left( \nu \EE\norm{\omega}_{\lambda}^2 \right)^{1/2}, 
\end{align}
which then vanishes as $\nu \to 0$ by the weak anomalous dissipation assumption~\eqref{eq:WADa}, proving \eqref{lem:VanVisYag}. 
\smallskip

\noindent \textbf{Step 2.} Secondly, we show vanishing of the damping over the inertial range, meaning we prove that
	\begin{equation}\label{eq:friction}
	\lim_{\alpha \to 0} \sup_{\nu \in (0,1)}\abs{\frac{4\damp}{\ell^2}\int_{0}^\ell r \bar{\mathfrak{G}}(r)\dr} = 0.
	\end{equation}
This follows from the observation that 
\begin{align}
\abs{\frac{2\damp}{\ell^2} \int_{0}^\ell r \bar{\mathfrak{G}}(r)\dr} \lesssim \alpha \EE\norm{(-\Delta)^{-\gamma}\omega}_{\lambda}^2, 
\end{align}
which vanishes by the weak anomalous dissipation assumption \eqref{eq:WADb}. 
\smallskip

\noindent \textbf{Step 3.}  Lastly, it remains to show that
\begin{align}
\lim_{\ell_I \rightarrow 0} \sup_{\ell \in (0,\ell_I)} \abs{\frac{4}{\ell^2}\int_{0}^\ell r \bar{\mathfrak{a}}(r)\dr-2 \eta} =0 . \label{eq:acont}
\end{align}
Since $\bar{\mathfrak{a}}(0) = \eta$, by the regularity of $\mathfrak{a}$ we have
\begin{align}
\lim_{\ell_I \rightarrow 0} \sup_{\ell \in (0,\ell_I)} \frac{1}{\ell^2}\int_{0}^\ell r \left|\bar{\mathfrak{a}}(r) - \bar{\mathfrak{a}}(0)\right| \dr = 0,
\end{align}
and \eqref{eq:acont} follows immediately.
Now, we collect \eqref{lem:VanVisYag}, \eqref{eq:friction}, \eqref{eq:acont}  and combine them with \eqref{eq:YagBal}, henceforth deducing 
\eqref{eq:Yaglom1}.

\subsection{Proof of \eqref{eq:Lindborg1}}\label{sub:Lindborg1}
The starting point is Lemma \ref{lem:S0-Sphere}, and specifically the equation
\begin{equation}
\frac{\bar{D}(\ell)}{\ell^3} =-\frac{4\nu}{\ell^3}\bar{\Gamma}^\prime(r)  + \frac{4\damp}{\ell^4} \int_0^\ell r\bar{\da}(r)\dr - \frac{4}{\ell^4}\int_0^\ell r\bar{a}(r)\dr. \label{eq:Lindborg1bal}
\end{equation}
We now analyze each of the 3 terms individually. However, the second term does not vanish (as it happened in the proof of \eqref{eq:Yaglom1}),
rather, a non-trivial cancellation takes place between the second and third term.

\smallskip

\noindent \textbf{Step 1.} We first show that
\begin{align}\label{eq:ExpDampLind1}
\frac{4}{\ell^4}\int_0^\ell r\bar{a}(r)\dr = \frac{2\eps}{\ell^2} -\frac{\eta}{4} + o_{\ell \to 0}(1). 
\end{align}
Recall $\bar{a}$ is defined by 
\begin{align}
	\bar{a}(r)  = \frac12\sum_j \fint_{\S}   \fint_{\mathbb T^2_\lambda}   g^\lambda_j(x)\cdot g^\lambda_j(x+rn) \dd x \dd n. 
\end{align}
We Taylor-expand the factor $g^\lambda_j(x+rn)$, and use that, for each fixed $j$,
\begin{align}
%\sum_{i}\fint_{\mathbb T^2_\lambda}  rn^i g^{\lambda}_j(x)\cdot\partial_{x_i} g^{\lambda}_j(x) \dd x=0,
\fint_{\mathbb T^2_\lambda}  g^{\lambda}_j(x)\cdot\partial_{x_i} g^{\lambda}_j(x) \dd x=0,
\end{align} 
and we integrate by parts to obtain 
	\begin{equation*}
	\begin{split}
	\fint_{\mathbb T^2_\lambda}   g^\lambda_j(x)\cdot g^\lambda_j(x+rn) \dd x
	&=\fint_{\mathbb T^2_\lambda}  \left( |g^\lambda_j(x)|^2 +\frac{r^2}{2} \sum_{i,m}n^i n^m g^{\lambda}_j(x)\cdot\partial_{x_m}\partial_{x_i} g^{\lambda}_j(x)\right) \dd x+  \mathcal{O}(r^3)\\
	&  =\fint_{\mathbb T^2_\lambda}  \left( |g^\lambda_j(x)|^2 -\frac{r^2}{2} \sum_{i,m}n^i n^m \partial_{x_m}g^{\lambda}_j(x)\cdot\partial_{x_i} g^{\lambda}_j(x)\right) \dd x+  \mathcal{O}(r^3).
	\end{split}
	\end{equation*}
Since	
	\begin{equation}\label{eq:trigo}
	\fint_{\S} n^i n^m \dd n=\frac{1}{2}\delta_{im},
	\end{equation}
we realize from the definitions \eqref{eq:eps} and \eqref{eq:eta} of $\eps$ and $\eta$ that
	\begin{equation*}
	\begin{split}
	\bar{a}(r)&=\frac12\sum_j\fint_{\mathbb T^2_\lambda}  |g^\lambda_j(x)|^2 \dd x -\frac{r^2}{8}\fint_{\mathbb T^2_\lambda} |\Grad g^{\lambda}_j(x)|^2 \dd x +  \mathcal{O}(r^3)= \eps-\frac{r^2}{4}\eta +  \mathcal{O}(r^3).
	\end{split}
	\end{equation*}
	Therefore, \eqref{eq:ExpDampLind1} follows by a simple integration.

\smallskip

\noindent \textbf{Step 2.} We now turn to the second term in the RHS of \eqref{eq:Lindborg1bal}, for which we want to deduce the following property:
for $\ell_\nu$ satisfying \eqref{def:ellnu} we have 	
\begin{align} \label{eq:ExpDampLind2}
& \lim_{\ell_I \to 0} \limsup_{\nu,\alpha \to 0} \sup_{\ell \in (\ell_\nu, \ell_I)}\abs{\frac{4\damp}{\ell^4} \int_0^\ell r\bar{\da}(r)\dr  - \frac{2\eps}{\ell^2}  } = 0.
\end{align}
We recall that in the above formula, the order in which the $\nu$ and $\alpha$ are taken to 0 does not matter. First notice that
\begin{align}
\frac{4\damp}{\ell^4} \int_0^\ell r\bar{\da}(r) \dee r= \frac{4\damp}{\ell^4} \int_0^\ell r \left( \bar{\da}(r)-\bar{\da}(0) \right) \dee r + \frac{2\damp}{\ell^2}\bar{\da}(0). \label{eq:Pelican}
\end{align}
In light of \eqref{eq:ENERbal} and \eqref{def:ellnu}, we have 
\begin{align}\label{eq:eafr1}
\damp \bar{\da}(0)= \eps - \nu \EE \norm{\omega}^2_\lambda=\eps+o_{\nu \to 0}(\ell_\nu^2).
%\frac{1}{\ell^2}\left(\eps - \damp \bar{\da}(0) \right) = \frac{\nu}{\ell^2} \EE \norm{\omega}^2_\lambda, 
\end{align}
Moreover, 
\begin{align}\label{eq:fsdagads}
\bar{\da}'(\ell) =\sum_{i,j} \EE\fint_{\S}\fint_{\mathbb T_\lambda^2} n^i  \partial_{x_i} (-\Delta)^{-\gamma} u^j(x+\ell n) (-\Delta)^{-\gamma} u^j(x) \dx \dee n, 
\end{align}
and hence $\bar{\da}'(0) = 0$.
Furthermore, taking a second derivative of \eqref{eq:fsdagads} and integrating by parts,  we have
\begin{align}
\sup_{\ell \in [0,1) }\abs{\bar{\da}''(\ell)}\lesssim \EE\norm{(-\Delta)^{-\gamma}\omega}^2_\lambda. 
\end{align}
As a consequence, 
\begin{align}
\abs{\bar{\da}(r) - \bar{\da}(0)} \lesssim r^2 \EE\norm{(-\Delta)^{-\gamma}\omega}^2_\lambda,
\end{align}
and in particular we deduce that
\begin{align}\label{eq:eafr2}
\frac{4\damp}{\ell^4} \int_0^\ell r \abs{\bar{\da}(r)-\bar{\da}(0)}\dr \lesssim \damp \EE\norm{(-\Delta)^{-\gamma}\omega}^2_\lambda. 
\end{align}
We now use \eqref{eq:eafr1} and \eqref{eq:eafr2} in \eqref{eq:Pelican} together with the weak anomalous dissipation assumption
\eqref{eq:WADb} to deduce \eqref{eq:ExpDampLind2}.

\smallskip

\noindent \textbf{Step 3.} Regarding the first term in \eqref{eq:Lindborg1bal}, analogous to \cite{BCZPSW18}, we have
\begin{align}\label{eq:VanVisLind1}
%\limsup_{\nu \to 0} \sup_{\alpha \in (0,1)}
\limsup_{\nu,\alpha \to 0}\sup_{\ell \in (\ell_\nu, \ell_I)} \frac{4\nu}{\ell^4}\bar{\Gamma}'(\ell) = 0. 
\end{align}
Indeed, we have
\begin{align}\label{eq:htgf}
\frac{4\nu}{\ell^3}\abs{\bar{\Gamma}'(\ell)} \lesssim \frac{1}{\ell^2} \nu \EE \norm{\omega}_{\lambda}^2, 
%-4\nu \frac{\bar{\Gamma}'(\ell)}{\ell^3} = -\frac{4\nu}{\ell^3}\EE \int_{\mathbb S^1} \grad_x u^j(x) \cdot \hat{n} ( u^j(x+\ell \hat{n}) - u^j(x)) dx. 
\end{align}
which vanishes in the way described in \eqref{eq:VanVisLind1} due to the choice \eqref{def:ellnu}. 
Collecting \eqref{eq:ExpDampLind1}, \eqref{eq:ExpDampLind2} and \eqref{eq:VanVisLind1}, we deduce \eqref{eq:Lindborg1} from 
\eqref{eq:Lindborg1bal}.

\subsection{Proof of \eqref{eq:Lindborg3}} 

Analogous to the proof of the 4/5 law in \cite{BCZPSW18} (see also \cite{Eyink2003}), we use \eqref{eq:Lindborg1} to prove \eqref{eq:Lindborg3}.
We begin with the balance given by Lemma \ref{lem:khmparallel}, which we write here for the reader's convenience as
\begin{align}\label{eq:Sparati}
\frac{\bar{D}_{||}(\ell)}{\ell^3}= - \frac{4\nu}{\ell^3} \bar{\Gamma}_{||}^\prime(\ell) +\frac{2}{\ell^6}\int_0^\ell r^2 \bar{D}(r)\dr + \frac{4\damp}{\ell^6}  \int_0^\ell r^3 \bar{\da}_{||}(r)\dee r - \frac{4}{\ell^6}\int_0^\ell r^3 \bar{a}_{||}(r) \dr.
\end{align}
The proof consists of several steps, which deal with the terms in \eqref{eq:Sparati} one by one. 

\smallskip

\noindent \textbf{Step 1.} 
We first show that for $\ell_\nu$ chosen as in \eqref{def:ellnu}, there holds
\begin{align}\label{eq:VanDissLind34}
%\lim_{\ell_I \to 0} \limsup_{\nu \to 0} \sup_{\alpha \in (0,1)}\sup_{\ell \in (\ell_\nu, \ell_I)}\abs{\frac{4\nu}{\ell^3}\bar{\Gamma}_{||}'(\ell)} = 0.
\lim_{\ell_I \to 0} \limsup_{\alpha,\nu \to 0} \sup_{\ell \in (\ell_\nu, \ell_I)}\abs{\frac{4\nu}{\ell^3}\bar{\Gamma}_{||}^\prime(\ell)} = 0. 
\end{align}
We first note that
\begin{align}
\bar{\Gamma}_{||}^\prime(\ell) = \EE \fint_{\S} \fint_{\T^2_\lambda} n^i \left(u^i(x) - u^i(x+\ell n)\right) n^k n^j \partial_{x_k} u^j(x+n\ell) \dee x \dee n,  
\end{align}
so that
\begin{align}\label{eq:HHHHH}
\abs{\bar{\Gamma}_{||}^\prime(\ell)} \lesssim \ell \EE \norm{\omega}_{\lambda}^2.
\end{align}
The claim \eqref{eq:VanDissLind34} now follows from \eqref{def:ellnu}.

\smallskip

\noindent \textbf{Step 2.} For $\ell_\nu$ satisfying \eqref{def:ellnu} we have
		\begin{align}
		\lim_{\ell_I \to 0} \limsup_{\nu,\alpha \to 0} \sup_{\ell \in (\ell_\nu, \ell_I)}\abs{\frac{4\damp}{\ell^6}  \int_0^\ell r^3 \bar{\da}_{||}(r)\dee r - \frac{\eps}{2\ell^2}  } = 0. \label{eq:ExpDampLind34}
		\end{align}

Again to show this, we write
\begin{align}
\frac{4\damp}{\ell^6} \int_0^\ell r^3 \bar{\da}_{||}(r) \dee r= \frac{4\damp}{\ell^6} \int_0^\ell r^3 \left( \bar{\da}_{||}(r)-\bar{\da}_{||}(0) \right) \dee r + \frac{\damp \bar{\da}_{||}(0)}{\ell^2}. \label{eq:DampLind1}
\end{align}
Note that by \eqref{eq:trigo}
\[
	\damp\bar{\da}_{||}(0) = \damp \left(\fint_{\S} n\tensor n \,\dn\right):\da(0) = \frac{\damp}{2}\E\|(-\Delta)^{-\gamma}u\|^2_\lambda = \frac{\ep}{2} - o_{\nu\to 0}(\ell_\nu^2).
\]
On the other hand since $\bar{G}_{||}^\prime(0) = 0$, $\bar{\da}_{||}$ can be estimated similarly to $\bar{\da}$ (see \eqref{eq:fsdagads} and the subsequent computations), and we infer that
	\begin{align}
	\sup_{r\in [0,1)} |\bar{\da}''_{||}(r)|\lesssim \EE\norm{(-\Delta )^{-\gamma} \omega}_{\lambda}^2.
	\end{align}
	Therefore
	\begin{equation}
	\left|\frac{4\damp}{\ell^6} \int_0^\ell r^3 \left( \bar{\da}_{||}(r)-\bar{\da}_{||}(0) \right) \dee r\right|\lesssim \damp \EE\norm{(-\Delta )^{-\gamma} \omega}_{\lambda}^2,
	\end{equation}
	and \eqref{eq:ExpDampLind34} follows  from \eqref{eq:WADb}.

\smallskip

\noindent \textbf{Step 3.} 
There holds
\begin{align}\label{eq:S0ExpLind34}
\lim_{\ell_I \to 0} \limsup_{\nu,\alpha \to 0} \sup_{\ell \in (\ell_\nu,\ell_I)} \abs{\frac{2}{\ell^6} \int_0^\ell r^2 \bar{D}(r) \dr - \frac{\eta}{12} } = 0. 
%\frac{2}{\ell^6} \int_0^\ell r^2 S_0(r) \dr= \frac{\eta}{12} + o_{\ell,\nu,\damp\to 0}(1).
\end{align}
Indeed, this simply follows from \eqref{eq:Lindborg1} proved earlier in Section \ref{sub:Lindborg1} followed by integration. 
% , we in fact proved that 
% \begin{align}
% 	\frac{S_0(r)}{r^3}=\frac{\eta}{4} +o_{r,\nu,\damp\to 0}(1),
% \end{align}
% so that \eqref{eq:S0ExpLind34} follows by integration.

\smallskip

\noindent \textbf{Step 4.} For the energy input, there holds

\begin{align}\label{eq:InputExLind3}
\frac{4}{\ell^6}  \int_0^\ell r^3 \bar{a}_{||}(r)\dee r =\frac{\eps}{2\ell^2} -\frac{\eta}{24} + o_{\ell\to 0}(1).
\end{align}
Recall that 
\[
	\bar{a}_{||}(r) = \frac{1}{2}\sum_j\fint_{\S}\fint_{\T^2_\lambda} (g^\lambda_j(x)\cdot n)(g^\lambda_j(x+r n)\cdot n)\dx \dn.
\]
Using Taylor's expansion as in Step 1 of Section \ref{sub:Lindborg1} and integration by parts, we have	
	\begin{align}\label{eq:faen}
	&\fint_{\mathbb T^2_\lambda}   g^\lambda_j(x)\cdot n\, g^\lambda_j(x+rn)\cdot n \dd x\notag\\
	&\qquad=\fint_{\mathbb T^2_\lambda}  \left( (g^\lambda_j(x)\cdot n)^2 -\frac{r^2}{2} \sum_{k,i,m,p}n^i n^m n^k n^p\partial_{x_m}g^{\lambda,(k)}_j(x)\partial_{x_i} g^{\lambda,(p)}_j(x)\right) \dd x+  \mathcal{O}(r^3).
	\end{align}
	Using \eqref{eq:trigo}, the first term yields
	\begin{align}\label{eq:a1}
	\sum_j\frac{2}{\ell^6}\int_0^\ell r^3 \fint_{\S}\fint_{\TT^2_\lambda} (g^\lambda_j(x)\cdot n)^2 \dd x \dd n \dr = 	\frac{1}{\ell^6}\int_0^\ell r^3\fint_{\TT^2_\lambda} |g^\lambda_j(x)|^2 \dd x \dr = \frac{\eps}{2\ell^2}.
	\end{align}
For the second term, we use that	
	\begin{align}\label{eq:trigo2}
	\fint_{\S}n^in^mn^kn^p \dee n = \frac{1}{8}(\delta_{i,m}\delta_{k,p} + \delta_{i,k}\delta_{m,p} + \delta_{i,p}\delta_{m,k}),
	\end{align}
 and the fact that $g_j^\lambda$ is divergence free to deduce the identity
	\begin{equation}
	\sum_{k,i,m,p}\left(\fint_{\S}n^i n^m n^k n^p\dee n\right)\partial_{x_m}g^{\lambda,(k)}_j\partial_{x_i} g^{\lambda,(p)}_j = \frac{1}{8}|\nabla g^\lambda_j|^2 + \sum_{i,k}\frac{1}{8}\partial_{x_k}g^{\lambda,(i)}_j\partial_{x_i}g^{\lambda,(k)}_j. 
	\end{equation}	
	Again using the divergence free property of $g^\lambda_j$, integration by parts reveals
	\[
	\sum_{i,k}\int_{\T^2_\lambda} \partial_{x_k}g^{\lambda,(i)}_j(x)\partial_{x_i}g^{\lambda,(k)}_j(x)\dx =0.
	\]
Consequently we obtain
	\begin{align}
	\sum_{k,i,m,p}\left(\fint_{\S}n^i n^m n^k n^p\dd n\right)\left(\fint_{\T_\lambda^2}\partial_{x_m}g^{\lambda,(k)}_j(x)\partial_{x_i} g^{\lambda,(p)}_j(x)\dx\right)  = \frac{1}{8}\fint_{\TT^2_\lambda}\left|\Grad g_j^\lambda(x)\right|^2 \dd x, 
	\end{align}		
	and after summing and integrating, we get
	\begin{align}\label{eq:a2}
	-\sum_j \frac{2}{\ell^6}\int_{0}^\ell\frac{r^5}{2}	\fint_{\S}\fint_{\TT^2_\lambda}\sum_{k,\ell,m,p}n^\ell n^m n^k n^p\partial_{x_m}g^{\lambda,(k)}_j(x)\partial_{x_\ell} g^{\lambda,(p)}_j(x) \dd n \dd x \dr = -\frac{\eta}{24}. 
	\end{align}
From \eqref{eq:a1} and  \eqref{eq:a2}, our claim \eqref{eq:InputExLind3} follows.
Putting together \eqref{eq:VanDissLind34}, \eqref{eq:ExpDampLind34},  \eqref{eq:S0ExpLind34}, and \eqref{eq:InputExLind3} with \eqref{eq:Sparati}
 completes the proof of \eqref{eq:Lindborg3}. 
\section{The inverse cascade} \label{sec:Inverse}
This section is devoted to the proof of part (ii) of Theorem \ref{thm:InformalDual}, whose precise statement is given in the theorem below.
\begin{theorem} \label{thm:inverse}
Suppose that $\lambda = \lambda(\alpha) < \infty$ is a continuous monotone increasing function such that $\displaystyle\lim_{\alpha \to 0} \lambda = \infty$.
Let $\set{u}_{\nu,\alpha > 0}$ a sequence of statistically stationary solutions such that Definition~\ref{def:WAD} holds. Then there exists 
$\ell_\alpha \in (1,\lambda)$ satisfying $\displaystyle\lim_{\alpha \to 0} \ell_\alpha = \infty$ such that
\begin{subequations} \label{eq:inverselaws} 
\begin{align}
& \lim_{\ell_I \to \infty}  \limsup_{\nu,\alpha \to 0}  \sup_{\ell \in [\ell_I, \ell_\alpha]}\abs{\frac{1}{\ell}\EE \fint_{\mathbb S} \fint_{\TT_\lambda^2} \abs{\delta_{\ell n} u}^2 \delta_{\ell n} u \cdot n \, \dee x \dee n - 2\eps } = 0, \label{eq:34invCas1} \\ 
& \lim_{\ell_I \to \infty}  \limsup_{\nu,\alpha \to 0}  \sup_{\ell \in [\ell_I, \ell_\alpha]}\abs{\frac{1}{\ell}\EE \fint_{\mathbb S} \fint_{\TT_\lambda^2} \left(\delta_{\ell n} u \cdot n\right)^3 \, \dee x \dee n - \frac{3}{2}\eps } = 0. \label{eq:45invCas1} 
\end{align}
\end{subequations} 
In fact, it suffices to choose $\ell_\alpha \to \infty$ satisfying
\begin{align}
\ell^2_\alpha = o \left( \left(\sup_{\nu \in (0,1)}\alpha \EE\norm{(-\Delta)^{-\gamma}\omega}_{\lambda}^2\right)^{-1}\right).  \label{def:ellalpha}
\end{align}
\end{theorem}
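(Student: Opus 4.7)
The plan is to mirror the structure of the proof of Theorem~\ref{thm:direct}, using the velocity KHM identities from Lemmas~\ref{lem:S0-Sphere} and~\ref{lem:khmparallel}, but with the roles of the noise and damping correlators interchanged. In the direct cascade, the noise correlator supplied the leading constant via Taylor expansion at small $r$ (exploiting the smoothness of $g_j^\lambda$), while the damping contribution was killed over the inertial range by~\eqref{eq:WADb}. In the inverse cascade at asymptotically large scales, the roles are flipped: the damping correlator $\bar{G}$ is smooth with $\bar{G}'(0)=0$ by isotropy, so its Taylor expansion around $0$ will supply the leading constant $\eps$; meanwhile the noise correlator integrals must be shown to vanish uniformly at large scales through the large-scale cancellation Lemmas~\ref{lem:LrgCan1} and~\ref{lem:LrgCan2}, whose hypotheses are supplied by Assumption~\ref{cond:g}. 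Viscous terms will vanish using~\eqref{eq:WADa} as before.

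For~\eqref{eq:34invCas1}, I would start from Lemma~\ref{lem:S0-Sphere}, which yields
\[
\frac{\bar{D}(\ell)}{\ell} = -2\nu\fint_{|y|\leq \ell}\Delta \tr\Gamma(y)\,\dee y + 2\alpha\fint_{|y|\leq \ell}\tr G(y)\,\dee y - 2\fint_{|y|\leq \ell}\tr a(y)\,\dee y,
\]
and handle each term. For the viscous piece, integration by parts in $x$ gives $|\Delta\tr\Gamma(y)|\leq \E\|\omega\|_\lambda^2$ pointwise in $y$, so the contribution is bounded by $2\nu\E\|\omega\|_\lambda^2\to 0$ uniformly in $\alpha,\ell$ by~\eqref{eq:WADa}. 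For the damping piece, isotropy gives $\bar{G}'(0)=0$, and integration by parts gives the pointwise bound $|\bar{G}''|\lesssim \E\|(-\Delta)^{-\gamma}\omega\|_\lambda^2$; Taylor expansion at $0$ then yields $2\alpha\bar{G}(0) + O(\alpha\ell^2\E\|(-\Delta)^{-\gamma}\omega\|_\lambda^2)$, where the main term equals $2\alpha\E\|(-\Delta)^{-\gamma}u\|_\lambda^2 = 2\eps - 2\nu\E\|\omega\|_\lambda^2 \to 2\eps$ by~\eqref{eq:ENERbal} and~\eqref{eq:WADa}, while the error is killed for $\ell \leq \ell_\alpha$ by the choice~\eqref{def:ellalpha}. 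For the noise piece, apply Lemma~\ref{lem:LrgCan1} componentwise to $f^\lambda = g_j^\lambda$ and sum in $j$; Assumption~\ref{cond:g} together with finiteness of $\eps$ supplies the required uniform equi-integrability at low frequencies and uniform $L^2$ boundedness, yielding $\fint_{|y|\leq\ell}\tr a(y)\,\dee y\to 0$ as $\ell\to\infty$ uniformly over the inertial range.

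For~\eqref{eq:45invCas1}, I would start from Lemma~\ref{lem:khmparallel} divided by $\ell$, which decomposes $\bar{D}_{||}(\ell)/\ell$ into four pieces. The viscous term uses the mean-zero subtraction identity from Step~1 of the proof of~\eqref{eq:Lindborg3} combined with $\|\delta_h u\|_\lambda \leq |h|\|\omega\|_\lambda$ to give $|\bar{\Gamma}_{||}'(\ell)|\lesssim \ell\E\|\omega\|_\lambda^2$; dividing by $\ell$ leaves $\nu\E\|\omega\|_\lambda^2\to 0$ by~\eqref{eq:WADa}. The energy-flux term $(2/\ell^4)\int_0^\ell r^2\bar{D}(r)\,\dee r$ is handled by splitting the integration at $\ell_I$: the contribution from $[0,\ell_I]$ is $O(\eps\ell_I^4/\ell^4)\to 0$ as $\ell\to\infty$ (using the crude bound $|\bar D(r)|\lesssim r\eps$ coming directly from Lemma~\ref{lem:S0-Sphere}), and by~\eqref{eq:34invCas1} the contribution from $[\ell_I,\ell]$ converges to $\eps$. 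For the damping, Taylor-expanding $\bar{G}_{||}$ and using $\fint_\S n\otimes n\,\dee n = I/2$ gives $\bar{G}_{||}(0)=(1/2)\E\|(-\Delta)^{-\gamma}u\|_\lambda^2$, so the main contribution is $\alpha\bar{G}_{||}(0)\to \eps/2$, and the error $O(\alpha\ell^2\E\|(-\Delta)^{-\gamma}\omega\|_\lambda^2)$ vanishes for $\ell\leq\ell_\alpha$ by~\eqref{def:ellalpha}. Finally the noise term $(4/\ell^4)\int_0^\ell r^3\bar{a}_{||}(r)\,\dee r$ converts under the polar change of variables $y=rn$ to $(2/\ell^2)\fint_{|y|\leq\ell}(y\otimes y):a(y)\,\dee y$, which vanishes by Lemma~\ref{lem:LrgCan2} applied to the divergence-free $g_j^\lambda$ (with the obvious extension of the lemma to sums in $j$). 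Summing the four contributions yields $0+\eps+\eps/2+0 = 3\eps/2$.

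The hard part, relative to Theorem~\ref{thm:direct}, is the noise piece: there is no usable Taylor expansion, so one must instead rely on the cancellation Lemmas~\ref{lem:LrgCan1} and~\ref{lem:LrgCan2}, which in turn require the uniform (in $\lambda$) equi-integrability at low frequencies built into Assumption~\ref{cond:g} to deliver vanishing uniformly in the large-box limit. The other somewhat delicate point is ensuring that the Taylor-expansion error in the damping contribution is uniformly small across the entire inertial range $[\ell_I,\ell_\alpha]$; this is precisely what the choice~\eqref{def:ellalpha} is designed to guarantee, playing a role analogous to~\eqref{def:ellnu} in Theorem~\ref{thm:direct}.
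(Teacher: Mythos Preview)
Your proposal is correct and follows essentially the same approach as the paper: start from Lemmas~\ref{lem:S0-Sphere} and~\ref{lem:khmparallel}, kill the viscous term via~\eqref{eq:WADa}, extract $2\eps$ (resp.\ $\eps/2$) from the damping correlator by expanding around $0$ with remainder controlled by~\eqref{def:ellalpha}, and dispose of the noise term via the large-scale cancellation Lemmas~\ref{lem:LrgCan1}--\ref{lem:LrgCan2} under Assumption~\ref{cond:g}. The only cosmetic differences are that the paper bounds $\bar G(r)-\bar G(0)$ (and $\bar G_{||}$) by a first-order Lipschitz estimate $\lesssim r(\alpha\E\|(-\Delta)^{-\gamma}\omega\|_\lambda^2)^{1/2}(\alpha\E\|(-\Delta)^{-\gamma}u\|_\lambda^2)^{1/2}$ rather than your second-order bound (both are controlled by~\eqref{def:ellalpha}), and the paper invokes~\eqref{eq:34invCas1} directly for the flux integral without your explicit split at $\ell_I$.
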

As in Theorem \ref{thm:direct}, also for \eqref{eq:inverselaws} the order in which we take limits is irrelevant. As before, we will
split the proof of the two statements above in two different subsections.

\subsection{Proof of \eqref{eq:34invCas1}}

We recall once more Lemma \ref{lem:S0-Sphere}, specifically that
\begin{equation}
\frac{\bar{D}(\ell)}{\ell} =-\frac{4\nu\bar{\Gamma}^\prime(\ell)}{\ell} + \frac{4\damp}{\ell^2} \int_0^\ell r\bar{\da}(r)\dr - \frac{4}{\ell^2}\int_0^\ell r\bar{a}(r)\dr. \label{eq:Lindborg1bal2}
\end{equation}
The most interesting contributions are from the energy input term and the large-scale damping. Indeed,
the contribution of the noise in fact \emph{vanishes}, in stark contrast to how the proofs in the direct cascade have proceeded.
\smallskip

\noindent \textbf{Step 1.} There holds
  \begin{align}\label{eq:gsadfdvxz}
	\lim_{\ell_I\to \infty}\lim_{\alpha \to 0}\sup_{\ell\in (\ell_I,\ell_\alpha)}\frac{1}{\ell^2}\int_0^\ell r\bar{a}(r)\dr= 0.
	\end{align}
This is an immediate consequence of Assumption~\ref{cond:g} and Lemma~\ref{lem:LrgCan1}.
\smallskip

\noindent \textbf{Step 2.} For  $\ell_\alpha$ satisfying \eqref{def:ellalpha}, we have
\begin{align}\label{eq:gkasrdbv}
\limsup_{\nu,\alpha \to 0} \sup_{\ell \in (1, \ell_\alpha)}\abs{\frac{4\damp}{\ell^2} \int_0^\ell r\bar{\da}(r)\dr - 2\eps }.	
\end{align}
Analogously to \eqref{eq:DampLind1}, we have
\begin{align}
\frac{4\damp}{\ell^2} \int_0^\ell r\bar{\da}(r) \dee r= \frac{4\damp}{\ell^2} \int_0^\ell r \left( \bar{\da}(r)-\bar{\da}(0) \right) \dee r + 2\damp\bar{\da}(0). \label{eq:DampLind11}
\end{align}
Thanks to \eqref{eq:ENERbal} and  Definition \ref{def:WAD}, we have that
\begin{align}\label{eq:graedfs}
\lim_{\nu\to0}\sup_{\alpha\in(0,1)}\abs{\damp\bar{\da}(0) - \eps}  =0.
\end{align}
Moreover, since
\begin{align}\label{eq:boundlipsda}
|\bar{\da}(r)-\bar{\da}(0)|&= \left|\E\fint_{\T^2_\lambda}\fint_{\mathbb{S}^1} (-\Delta)^{-\gamma} (u(x+rn)-u(x))\cdot (-\Delta)^{-\gamma} u(x) \dee n \dee x\right| \notag \\
&\lesssim r\left(\E\norm{(-\Delta)^{-\gamma} \omega}^2_{\lambda} \right)^{1/2}\left(\E\norm{(-\Delta)^{-\gamma}u}_{\lambda}^2 \right)^{1/2},
\end{align}
we deduce from \eqref{eq:ENERbal} that
\begin{align}
\abs{\frac{4\damp}{\ell^2} \int_0^\ell r(\bar{\da}(r)-\bar{\da}(0))\dr} 
&\lesssim \ell\left(\alpha\E\norm{(-\Delta)^{-\gamma}\omega}^2_{\lambda}\right)^{1/2}\left(\alpha\E\norm{(-\Delta)^{-\gamma}u}_{\lambda}^2\right)^{1/2} \notag\\
&\lesssim \ell\left(\alpha\E\norm{(-\Delta)^{-\gamma}\omega}^2_{\lambda}\right)^{1/2}. 
\end{align}
The claim \eqref{eq:gkasrdbv} now follows from \eqref{eq:graedfs}, the above estimate and the choice \eqref{def:ellalpha}.

\smallskip

\noindent \textbf{Step 3.}
Finally, it is straightforward to show that the effect of the viscous dissipation vanishes in the large-scale inertial range:
	\begin{equation}\label{eq:rebgv}
%	\lim_{\nu \to 0}\sup_{\alpha \in (0,1)}\frac{4\nu}{\ell^2}\int_0^\ell\left(r\bar{\Gamma}''(r)+\bar{\Gamma}'(r)\right)\dr = 0.
	\limsup_{\nu,\alpha \to 0}\frac{4\nu\bar{\Gamma}^\prime(\ell)}{\ell} = 0.
	\end{equation}
Indeed, by the same argument as in \eqref{eq:htgf}, we have
\begin{align}
\frac{4\nu}{\ell}\abs{\bar{\Gamma}'(\ell)} & \lesssim \nu\EE \norm{\omega}_{\lambda}^2,
\end{align}
and \eqref{eq:rebgv} follows from  Definition \ref{def:WAD}.
Putting together \eqref{eq:gsadfdvxz}, \eqref{eq:gkasrdbv} and  \eqref{eq:rebgv} with \eqref{eq:Lindborg1bal2}, we finish the proof of~\eqref{eq:34invCas1}.

\subsection{Proof of \eqref{eq:45invCas1}}
As in the proof of \eqref{eq:Lindborg3}, the starting point is Lemma \ref{lem:khmparallel} and the identity
\begin{align}\label{eq:Sparati2}
\frac{\bar{D}_{||}(\ell)}{\ell}= - \frac{4\nu\bar{\Gamma}^\prime_{||}(\ell)}{\ell} +\frac{2}{\ell^4}\int_0^\ell r^2 \bar{D}(r)\dr + \frac{4\alpha}{\ell^4}\int_{0}^\ell r^3\bar{\da}_{||}(r)\dr - \frac{4}{\ell^4}\int_0^\ell r^3 \bar{a}_{||}(r)\dr.
\end{align}
Notice that thanks to \eqref{eq:HHHHH},
\begin{align}\label{eq:4441}
\frac{\nu|\bar{\Gamma}^\prime_{||}(\ell)|}{\ell}  \leqc \nu \E\|\omega\|_{\lambda}^2,
\end{align}
and therefore this term can be taken care of by using \eqref{eq:WADa}. Moreover, in light of  \eqref{eq:34invCas1} we have
\begin{align}\label{eq:329bis}
\lim_{\ell_I \to \infty}  \limsup_{\nu,\alpha \to 0}  \sup_{\ell \in [\ell_I, \ell_\alpha]}\left|\frac{2}{\ell^4}\int_0^\ell r^2 \bar{D}(r)\dr - \ep\right| = 0.
\end{align}
To control the third term in the right hand side of \eqref{eq:Sparati2}, we argue as in \eqref{eq:DampLind11} and use \eqref{eq:ENERbal} to obtain
	\begin{equation}
	\frac{4\damp}{\ell^4}\int_0^\ell r^3 \bar{G}_{||}(r)\dr	= \frac{\eps}{2}+\frac{4\damp}{\ell^4}\int_0^\ell r^3 (\bar{\da}_{||}(r)-\bar{\da}_{||}(0)) \dr - \frac{\nu}{2}\E\|\nabla u\|_{\lambda}^2.
	\end{equation}
Now, as in \eqref{eq:boundlipsda},
\begin{equation}
\alpha|\bar{\da}_{||}(r)-\bar{\da}_{||}(0)| \leqc r\left(\alpha \E\|(-\Delta)^{-\gamma} \omega\|_{\lambda}^2\right)^{1/2},
\end{equation}
and therefore
\begin{equation}
\begin{aligned}
\frac{4\damp}{\ell^4} \left|\int_0^\ell r^3 (\bar{\da}_{||}(r)-\bar{\da}_{||}(0)) \dr\right| \lesssim\ell \left(\alpha \E\|(-\Delta)^{-\gamma/2} \omega\|_{\lambda}^2\right)^{1/2}. 
\end{aligned}
\end{equation}
As a consequence of \eqref{eq:WADa}--\eqref{eq:WAD2b} and \eqref{def:ellalpha}, we conclude that
\begin{align}\label{eq:329tris}
\lim_{\ell_I \to \infty}  \limsup_{\nu,\alpha \to 0}  \sup_{\ell \in [\ell_I, \ell_\alpha]}\left|\frac{4\damp}{\ell^4}\int_0^\ell r^3 \bar{G}_{||}(r)\dr- \frac{\ep}{2}\right| = 0.
\end{align}
Lastly, the claim that
\begin{align}\label{eq:329quadris}
\lim_{\ell_I \to \infty}  \limsup_{\nu,\alpha \to 0}  \sup_{\ell \in [\ell_I, \ell_\alpha]}\left|\frac{4}{\ell^4}\int_0^\ell r^3 \bar{a}_{||}(r)\dr\right| = 0,
\end{align}
follows from the fact that
\[
	\frac{1}{\ell^4}\int_0^\ell r^3 \bar{a}_{||}(r)\dr = \frac{1}{2 \ell^2}\fint_{\{|y|\leq \ell\}} y\tensor y : a(y)\dy
\] 
and an application of Lemma~\ref{lem:LrgCan2}, the divergence-free property of the noise and Assumption~\ref{cond:g}.
Thus,  \eqref{eq:45invCas1} is a consequence of \eqref{eq:4441}, \eqref{eq:329bis}, \eqref{eq:329tris} and \eqref{eq:329quadris}, together with
\eqref{eq:Sparati2}. The proof is concluded.

\section{Necessary conditions for the dual cascade} \label{sec:Nec}

\subsection{Proof of~\eqref{eq:necessary1}}
	As in the proof of~\eqref{eq:yag}, we consider the identity from Lemma~\ref{lem:Yag-KHM-Sphere},
	\begin{equation}
	\frac{\bar{\mathfrak{D}}(\ell)}{\ell} =- 2\nu\fint_{|y|\leq \ell}\Delta \mathfrak{B}(y)\dy  + 2\damp\fint_{|y|\leq\ell} \mathfrak{G}(y)\dy - 2\fint_{|y|\leq \ell} \mathfrak{a}(y)\dy. \label{eq:NecBal1}
	\end{equation}
	The first term on the right hand side of \eqref{eq:NecBal1} can be written as 
	\begin{equation*}
	\begin{split}
	- 2\nu\fint_{|y|\leq \ell}\Delta \mathfrak{B}(y)\dy & = 2\nu\fint_{|y|\leq \ell}\EE\fint_{\TT_\lambda^2}\Grad\omega(x) \cdot \Grad\omega(x+y) \dx \dy \\
	% & =  2\nu\fint_{|y|\leq \ell}\EE\fint_{\TT_\lambda^2}|\Grad\omega|^2 \dx \dy + \frac{2\nu}{\pi\ell^2}\int_{|y|\leq \ell}\EE\fint_{\TT_\lambda^2}\Grad\omega(x)\Grad(\omega(x+y)-\omega(x)) \dx \dy\\
& = 2\nu\E\norm{\Grad\omega}_\lambda^2 + 2\nu\fint_{|y|\leq \ell}\EE\fint_{\TT_\lambda^2}\Grad\omega(x)\cdot \delta_y\Grad\omega(x) \dx \dy.
	\end{split}
	\end{equation*}
	The second term then vanishes by the assumption in \eqref{def:PCdiss}
	\begin{equation*}
	\left|2\nu\fint_{|y|\leq \ell}\EE\fint_{\TT_\lambda^2}\Grad\omega(x)\delta_y\Grad\omega(x) \dx \dy\right|\lesssim \sqrt{\nu\eta}\left(\sup_{|h|\leq \ell}\E\norm{\delta_h \Grad\omega}_\lambda^2\right)^{1/2} = o_{\ell\to 0}(1),
	\end{equation*}
	uniformly in $\damp,\nu>0$ and hence the first term in \eqref{eq:NecBal1} satisfies
	\begin{equation*}
	- 2\nu\fint_{|y|\leq \ell}\Delta \mathfrak{B}(y)\dy  = 2\nu\E\norm{\Grad\omega}_\lambda^2  + o_{\ell\to 0}(1).
	\end{equation*}
	The second term in \eqref{eq:NecBal1} is estimated as follows:
	\begin{equation*}
	\begin{split}
	2\damp \fint_{|y|\leq\ell}\mathfrak{G}(y)\dy & = 	2\damp\EE\fint_{|y|\leq\ell} \fint_{\TT^2_\lambda}(-\Delta)^{-\gamma}\omega(x)(-\Delta)^{-\gamma}\omega(x+y)\dx \dy\\
	% & = 	2\damp\EE\fint_{|y|\leq\ell} \fint_{\TT^2_\lambda}|(-\Delta)^{-\gamma}\omega(x)|^2\dx \dy\\
	% &\hphantom{=} + 	2\damp \EE\fint_{|y|\leq\ell} \fint_{\TT^2_\lambda}(-\Delta)^{-\gamma}\omega(x)(-\Delta)^{-\gamma}(\omega(x)-\omega(x+y))\dx \dy\\
	& = 2\damp \EE\norm{(-\Delta)^{-\gamma}\omega}_{\lambda}^2 + 	2\damp\EE\fint_{|y|\leq\ell} \fint_{\TT^2_\lambda}(-\Delta)^{-\gamma}\omega(x)(-\Delta)^{-\gamma}\delta_y\omega(x)\dx \dy.
	\end{split}
	\end{equation*}
	By the assumption in \eqref{def:PCdamp}, we have the following (uniformly in $\damp,\nu$)
	\begin{equation*}
	\left| 2\damp\EE\fint_{|y|\leq\ell} \fint_{\TT^2_\lambda}(-\Delta)^{-\gamma}\omega(x)(-\Delta)^{-\gamma}\delta_y\omega(x)\dx \dy\right|\lesssim \sqrt{\damp\eta}\left(\sup_{|h|\leq \ell}\E\norm{(-\Delta)^{-\gamma}\delta_h\omega}_\lambda^2\right)^{1/2} =o_{\ell\to 0}(1),
	\end{equation*}
	and hence
	\begin{equation*}
	2\damp\fint_{|y|\leq\ell} \mathfrak{G}(y)\dy =2\damp \EE\norm{(-\Delta)^{-\gamma}\omega}_{\lambda}^2  + o_{\ell\to 0}(1).
	\end{equation*}
	Combining with~\eqref{eq:acont}, we obtain~\eqref{eq:necessary1} from \eqref{eq:NecBal1}.

\subsection{Proof of~\eqref{eq:necessary2}}
	As in the proof of~\eqref{eq:Lindborg1}, we use Lemma~\ref{lem:S0-Sphere} to obtain equation~\eqref{eq:Lindborg1bal}
	\begin{equation}
	\frac{\bar{D}(\ell)}{\ell^3} =-\frac{4\nu\bar{\Gamma}^\prime(\ell)}{\ell^3}  + \frac{4\damp}{\ell^4} \int_0^\ell r\bar{\da}(r)\dr - \frac{4}{\ell^4}\int_0^\ell r\bar{a}(r)\dr, \label{eq:NecBal2}
	\end{equation}
	and then consider each of the terms on the right hand side separately. 
	
\smallskip

\noindent \textbf{Step 1.} The first term on the right-hand side of \eqref{eq:NecBal2} satisfies (c.f.\eqref{eq:htgf})
	\begin{equation}
	\frac{4\nu}{\ell^3}\bar{\Gamma}'(\ell) = \frac{4\nu}{\ell^3}\bar{\Gamma}'(0) + \frac{4\nu}{\ell^2}\bar{\Gamma}''(0)+\frac{2\nu}{\ell}\bar{\Gamma}'''(0)+\frac{2\nu}{3}\bar{\Gamma}''''(\vartheta), \label{eq:GammaExpand}
	\end{equation}
	for some $\vartheta\in [0,\ell]$. We have  %but we have to show that this Taylor expansion is justified and compute all the terms. We have
	\begin{equation*}
	|\bar{\Gamma}'(r)| =\left|\sum_{i,j}\EE\fint_{\S}\fint_{\TT_{\lambda}^2} n^i \partial_{x_i} u^j(x+r n)u^j(x)\dx \dee n\right|\leq (\EE\norm{\Grad u}_\lambda^2)^{1/2} (\EE\norm{u}_\lambda^2)^{1/2},
	\end{equation*}
	which is bounded for any fixed $\nu,\damp>0$, and therefore
	\begin{equation*}
	\lim_{r\to 0}	\bar{\Gamma}'(r) =\sum_{i,j}\EE\fint_{\S}\fint_{\TT_{\lambda}^2} n^i \partial_{x_i} u^j(x)u^j(x)\dx \dee n =0.
	\end{equation*}
Similarly,
	\begin{equation*}
	%	\begin{split}
	\bar{\Gamma}''(r)
	%& =\sum_{i,j,k}\EE\fint_{\S}\fint_{\TT_{\lambda}^2} n^i n^k\partial_{x_i}\partial_{x_k} u^j(x+r n)u^j(x)\dx \dee n\\& 
	= -\sum_{i,j,k}\EE\fint_{\S}\fint_{\TT_{\lambda}^2} n^i n^k\partial_{x_k} u^j(x+r n)\partial_{x_i} u^j(x)\dx \dee n,
	%	\end{split}
	\end{equation*}
	and hence $|\bar{\Gamma}''(r)|\lesssim \EE\norm{\Grad u}_\lambda^2$. Moreover, by~\eqref{eq:trigo}, we have
	\begin{equation*}
	\bar{\Gamma}''(0)	 = -\sum_{i,j,k}\EE\fint_{\S}\fint_{\TT_{\lambda}^2} n^i n^k\partial_{x_k} u^j(x)\partial_{x_i} u^j(x)\dx \dee n
	=-\frac{1}{2}\EE\fint_{\TT_{\lambda}^2} |\Grad u(x)|^2\dx. 
	\end{equation*}
For the third derivative, 
	\begin{equation*}
	%	\begin{split}
	\bar{\Gamma}'''(r)
	%& =\sum_{i,j,k,m}\EE\fint_{\S^{1}}\fint_{\TT_{\lambda}^2} n^i n^k n^m\partial_{x_m}\partial_{x_i}\partial_{x_k} u^j(x+r n)u^j(x)\dx \dee n\\& 
	= -\sum_{i,j,k,m}\EE\fint_{\S}\fint_{\TT_{\lambda}^2} n^i n^k n^m\partial_{x_m}\partial_{x_k} u^j(x+r n)\partial_{x_i} u^j(x)\dx \dee n,
	%	\end{split}
	\end{equation*}
	hence $|\bar{\Gamma}'''(r)|\lesssim (\EE\norm{\Grad u}_\lambda^2)^{1/2}(\EE\norm{\Grad \omega}_\lambda^2)^{1/2}$, and for $r=0$, $\bar{\Gamma}'''(0)=0$ since
	\begin{equation}\label{eq:trigo3}
	\fint_{\S}n^in^jn^k\,\dee n = 0.
	\end{equation}
	For the fourth derivative of $\Gamma$, we have
	\begin{equation*}
	\begin{split}
	\bar{\Gamma}''''(r)
	%& =\sum_{i,j,k,m,q}\EE\fint_{\S^{1}}\fint_{\TT_{\lambda}^2} n^i n^k n^m n^q\partial_{x_m}\partial_{x_i}\partial_{x_k}\partial_{x_q} u^j(x+r n)u^j(x)\dx \dee n\\& 
	&= \sum_{i,j,k,m,q}\EE\fint_{\S}\fint_{\TT_{\lambda}^2} n^i n^k n^m n^q\partial_{x_m}\partial_{x_k} u^j(x+r n)\partial_{x_i} \partial_{x_q}u^j(x)\dx \dee n\\
	& = \sum_{i,j,k,m,q}\EE\fint_{\S}\fint_{\TT_{\lambda}^2} n^i n^k n^m n^q\partial_{x_m}\partial_{x_k} u^j(x)\partial_{x_i} \partial_{x_q}u^j(x)\dx \dee n\\
	&\hphantom{=}+ \sum_{i,j,k,m,q}\EE\fint_{\S}\fint_{\TT_{\lambda}^2} n^i n^k n^m n^q\partial_{x_m}\partial_{x_k}\delta_{rn} u^j(x)\partial_{x_i} \partial_{x_q}u^j(x)\dx \dee n\\
	&=: I_1 + I_2.
	\end{split}
	\end{equation*}
	For $I_1$, we use ~\eqref{eq:trigo2} to conclude
	\[
	I_1 = \frac{3}{8}\EE\fint_{\TT_{\lambda}^2} |\Grad\omega(x)|^2\dx.
	\]
	For $I_2$ we use instead 
	\begin{equation*}
	\left|I_2\right|\lesssim (\EE\norm{\Grad\omega}_\lambda^2)^{1/2} \left(\sup_{|h|\leq r}\EE\norm{\delta_h\Grad\omega}_\lambda^2\right)^{1/2}.
	\end{equation*}
	Combining the last few calculations with \eqref{eq:GammaExpand}, we obtain
	\begin{equation}\label{eq:dritt1}
	\frac{4\nu\bar{\Gamma}^\prime(\ell)}{\ell^3}  =   -\frac{2\nu}{\ell^2}\EE\norm{\Grad u}_\lambda^2+\frac{\nu}{4}\E\norm{\Grad\omega}^2_\lambda + \mathcal{O}\left( \left(\eta\nu\sup_{|h|\leq \ell}\EE\norm{\delta_h\Grad\omega}_\lambda^2\right)^{1/2}\right).
	\end{equation}
	
\smallskip

\noindent \textbf{Step 2.} Next, we estimate the second term on the right-hand side of \eqref{eq:NecBal2}. By Taylor expansion,
	\begin{equation}\label{eq:taylorx}
	\frac{4\damp}{\ell^4} \int_0^\ell r\bar{\da}(r)\dr  = \frac{2\damp}{\ell^2}\bar{\da}(0) + \frac{4\damp}{3\ell}\bar{\da}'(0) + \frac{2\damp}{\ell^4}\int_0^\ell r^3\bar{\da}''(\vartheta_r)\dr,
	\end{equation}
	for some $\vartheta_r\in [0,r]$, $r\in [0,\ell]$. We have already computed the first two terms in~\eqref{eq:eafr1} and~\eqref{eq:fsdagads} and found in particular that $\bar{\da}(0) = \E\norm{(-\Delta)^\gamma u}_\lambda^2$ and $\bar{\da}'(0)=0$. For the last term, we have 
	%(integrating by parts)
	\begin{equation*}
	\begin{split}
	\bar{\da}''(r) &=-\sum_{i,j,k} \EE\fint_{\S }\fint_{\mathbb T_\lambda^2} n^i n^k  \partial_{x_i} (-\Delta)^{-\gamma} u^j(x+r n) \partial_{x_k}(-\Delta)^{-\gamma} u^j(x) \dx \dee n\\
	& =  -\sum_{i,j,k} \EE\fint_{\S }\fint_{\mathbb T_\lambda^2} n^i n^k  \partial_{x_i} (-\Delta)^{-\gamma} u^j(x) \partial_{x_k}(-\Delta)^{-\gamma} u^j(x) \dx \dee n  \\
	&\hphantom{=} - \sum_{i,j,k} \EE\fint_{\S}\fint_{\mathbb T_\lambda^2} n^i n^k  \partial_{x_i} (-\Delta)^{-\gamma} u^j(x) \partial_{x_k}(-\Delta)^{-\gamma}\delta_{rn} u^j(x) \dx \dee n\\
	& =:  I_1   + I_2
	\end{split}
	\end{equation*}
    Once more~\eqref{eq:trigo} implies
    \[
    I_1 = -\frac{1}{2}\EE\norm{(-\Delta)^{-\gamma}\omega}^2_\lambda,
    \]
    while we also obtain
	\begin{equation*}
	\left|I_2\right| \lesssim \left(\EE \norm{(-\Delta)^{-\gamma}\omega}_\lambda^2\right)^{1/2} \left(\sup_{|h|\leq r} \EE \norm{\delta_h(-\Delta)^{-\gamma}\omega}_\lambda^2\right)^{1/2}.
	\end{equation*}
	Combining with~\eqref{eq:taylorx}, we have
	\begin{equation}\label{eq:dampingestimate}
	\frac{4\damp}{\ell^4} \int_0^\ell r\bar{\da}(r)\dr  = \frac{2\damp}{\ell^2} \E\norm{(-\Delta)^\gamma u}_\lambda^2  -\frac{\damp}{4}\EE \norm{(-\Delta)^{-\gamma}\omega}_\lambda^2 + \mathcal{O}\left( \left(\damp\eta\sup_{|h|\leq \ell} \EE \norm{\delta_h(-\Delta)^{-\gamma}\omega}_\lambda^2\right)^{1/2}\right). 
	\end{equation}
	
\smallskip

\noindent \textbf{Step 3.}
	Now combining~\eqref{eq:dritt1}, ~\eqref{eq:dampingestimate}, and ~\eqref{eq:ExpDampLind1} and inserting in~\eqref{eq:NecBal2}, we obtain
	\begin{equation*}
	\begin{split}
	\frac{\bar{D}(\ell)}{\ell^3} &=\frac{2}{\ell^2}\left(\nu\EE\norm{\Grad u}_\lambda^2+\damp\E\norm{(-\Delta)^\gamma u}_\lambda^2-\eps\right)+\frac{1}{4}\left(\eta - \nu\E\norm{\Grad\omega}^2_\lambda-{\damp}\EE \norm{(-\Delta)^{-\gamma}\omega}_\lambda^2 \right)\\
	&\quad+ \mathcal{O}\left( \left(\damp\eta\sup_{|h|\leq \ell} \EE \norm{\delta_h(-\Delta)^{-\gamma}\omega}_\lambda^2\right)^{1/2}+ \left(\eta\nu\sup_{|h|\leq \ell}\EE\norm{\delta_h\Grad\omega}_\lambda^2\right)^{1/2}\right) + o_{\ell \to 0}(1) \\
	& =\mathcal{O}\left( \left(\damp\eta\sup_{|h|\leq \ell} \EE \norm{\delta_h(-\Delta)^{-\gamma}\omega}_\lambda^2\right)^{1/2}+ \left(\eta\nu\sup_{|h|\leq \ell}\EE\norm{\delta_h\Grad\omega}_\lambda^2\right)^{1/2}\right)
	+ o_{\ell \to 0}(1),
	\end{split}
	\end{equation*}
	where we used the energy and enstrophy balance~\eqref{eq:ENERbal},~\eqref{eq:ENSTRObal} resp., for the last equality. Now the result follows using the assumptions.

\subsection{Proof of~\eqref{eq:necessary3}}
	%	This is going to be real fun so I save it for later ... :p 
	This time, we start from identity~\eqref{eq:Sparati}:
	\begin{equation}
	\frac{\bar{D}_{||}(\ell)}{\ell^3}= - \frac{4\nu\bar{\Gamma}_{||}^\prime(\ell)}{\ell^3} +\frac{2}{\ell^6}\int_0^\ell r^2 \bar{D}(r)\dr + \frac{4\alpha}{\ell^6} \int_0^\ell r^3 \bar{\da}_{||}(r)\dee r - \frac{4}{\ell^6}\int_0^\ell r^3 \bar{a}_{||}(r)\dr. \label{eq:NecBal3}
	\end{equation}
	which follows from Lemma~\ref{lem:khmparallel}. 
	
\smallskip

\noindent \textbf{Step 1.} For the first term in \eqref{eq:NecBal3}, we again use Taylor expansion, 
	\begin{equation*}
	\frac{4\nu}{\ell^3} \bar{\Gamma}_{||}'(\ell) = \frac{4\nu}{\ell^3} \bar{\Gamma}_{||}'(0) + \frac{4\nu}{\ell^2} \bar{\Gamma}_{||}''(0) + \frac{2\nu}{\ell} \bar{\Gamma}_{||}'''(0) + \frac{2\nu}{3} \bar{\Gamma}_{||}''''(\vartheta),
	\end{equation*}
	for some $\vartheta\in [0,\ell]$. From~\eqref{eq:HHHHH}, we obtain $\bar{\Gamma}_{||}'(0)=0$. Next, 
	\begin{equation*}
	%\begin{split}
	\bar{\Gamma}_{||}''(r)
	%& = \sum_{i,j,k,m}\EE\fint_{\S^{1}}\fint_{\TT^2_\lambda} n^i n^j n^k n^m u^i(x) \partial_{x_k}\partial_{x_m} u^j(x+ rn) \dx \dee n\\&
	= -\sum_{i,j,k,m}\EE\fint_{\S}\fint_{\TT^2_\lambda} n^i n^j n^k n^m \partial_{x_k} u^i(x) \partial_{x_m} u^j(x+ rn) \dx \dee n,
	%\end{split}
	\end{equation*}
	after integrating by parts. Therefore $|\bar{\Gamma}_{||}''(r)|\lesssim \EE \norm{\Grad u}^2_\lambda$ and %uniformly in $r>0$ for fixed $\nu, \damp>0$ and
	\begin{equation*}
	\bar{\Gamma}_{||}''(0) = -\frac{1}{8}\E\norm{\Grad u}_\lambda^2,
	%	\EE\fint_{\TT^2_\lambda}|\Grad u|^2 \dx,
	\end{equation*}
where we used~\eqref{eq:trigo2} again. Next, we compute $\bar{\Gamma}_{||}'''(0)$: 
% . We have
% 	\begin{equation*}
% 	\begin{split}
% 	|\bar{\Gamma}_{||}'''(r)|
% 	& = \left|\sum_{i,j,k,m,p}\EE\fint_{\S}\fint_{\TT^2_\lambda} n^i n^j n^k n^m n^p \partial_{x_k} u^i(x) \partial_{x_p}\partial_{x_m} u^j(x+ rn) \dx \dee n\right|\\
% 	&\lesssim \left(\EE\norm{\Grad u}^2_\lambda\right)^{1/2}\left(\EE \norm{\Grad \omega}^2_\lambda\right),
% 	\end{split}
% 	\end{equation*}
% 	uniformly in $r>0$, hence
	\begin{equation*}
	\bar{\Gamma}_{||}'''(0) = -\sum_{i,j,k,m,p}\EE\fint_{\S} n^i n^j n^k n^m n^p\dee n\fint_{\TT^2_\lambda} \partial_{x_k} u^i(x) \partial_{x_p}\partial_{x_m} u^j(x) \dx=0, 
	\end{equation*}
	since
	\begin{equation*}
	\fint_{\S}n^i n^j n^k n^m n^p \dee n = 0.
	\end{equation*}
	Moreover,
	\begin{equation*}
	\begin{split}
	\bar{\Gamma}_{||}''''(r) 
	& = \sum_{i,j,k,m,p,q}\EE\fint_{\S}\fint_{\TT^2_\lambda} n^i n^j n^k n^m n^p n^q \partial_{x_q}\partial_{x_k} u^i(x) \partial_{x_p}\partial_{x_m} u^j(x+ rn) \dx \dee n\\
	& = \sum_{i,j,k,m,p,q}\EE\fint_{\S} n^i n^j n^k n^m n^p n^q \fint_{\TT^2_\lambda}\partial_{x_q}\partial_{x_k} u^i(x) \partial_{x_p}\partial_{x_m} u^j(x) \dx \dee n\\
	& \quad + \sum_{i,j,k,m,p,q}\EE\fint_{\S} n^i n^j n^k n^m n^p n^q \fint_{\TT^2_\lambda}\partial_{x_q}\partial_{x_k} u^i(x) \partial_{x_p}\partial_{x_m} \delta_{rn}u^j(x) \dx \dee n\\
	& =: I_1 + I_2.
	\end{split}
	\end{equation*}
	The very last term is bounded by
	\begin{equation*}
	\left|I_2\right|\lesssim \left(\EE\norm{\Grad\omega}_\lambda^2\right)^{1/2}\left(\sup_{|h|\leq r}\EE\norm{\Grad\delta_h\omega}_\lambda^2\right)^{1/2}.
	\end{equation*}
	Inserting the expression for a sixth order isotropic tensor in the Appendix~\ref{sec:tensorshit} (and using that $u$ is divergence free), we obtain
	\begin{equation*}
	I_1 = \frac{1}{16}\EE\norm{\Grad\omega}^2_\lambda.
	\end{equation*} 
	%	(i got $1/16$ as the constant but I believe that is wrong)
	Combining, the last few calculations, we get
	\begin{equation}\label{eq:Hell2}
	\frac{4\nu}{\ell^3} \bar{\Gamma}_{||}'(\ell) =  -\frac{\nu}{2\ell^2}\EE\norm{\omega}^2_\lambda +  \frac{\nu}{24}\EE\norm{\Grad\omega}^2_\lambda + \mathcal{O}\left(\left(\eta\nu\sup_{|h|\leq \ell}\EE\norm{\Grad\delta_h\omega}^2_\lambda\right)^{\frac{1}{2}}\right).
	\end{equation}
	
\smallskip

\noindent \textbf{Step 2.} By~\eqref{eq:necessary2}, we have $r^{-3} \bar{D}(r)\to 0$ uniformly in $\damp$ and $\nu$ and therefore
	\begin{equation}
	\label{eq:easyterm}
	\frac{2}{\ell^6}\int_0^\ell r^2 \bar{D}(r)\dr \to 0,\quad \text{uniformly in }\, \damp,\nu\in (0,1).
	\end{equation}
	
\smallskip

\noindent \textbf{Step 3.} We continue to estimate
	\begin{equation*}
	\begin{split}
	\frac{4\alpha}{\ell^6} \int_0^\ell r^3 \bar{\da}_{||}(r)\dee r
	& = \frac{4\damp}{\ell^6}\int_0^\ell r^3(\bar{\da}_{||}(0)+ r\bar{\da}_{||}'(0)+\frac{r^2}{2}\bar{\da}_{||}''(\vartheta_r))\dee r\\
	& = \frac{\damp}{\ell^2}\bar{\da}_{||}(0)+ \frac{4\damp}{5\ell}\bar{\da}_{||}'(0)+\frac{2\damp}{\ell^6}\int_0^\ell r^5\bar{\da}_{||}''(\vartheta_r))\dee r.
	\end{split}
	\end{equation*}
 As above, (c.f.~\eqref{eq:ExpDampLind34}), 
	\begin{equation*}
	\frac{\damp}{\ell^2}\bar{\da}_{||}(0) = \frac{\damp}{2\ell^2}\EE\norm{(-\Delta)^{-\gamma} u}^2_\lambda,
	\end{equation*}
	and $\bar{\da}_{||}'(0)=0$. Hence, it remains to compute $\bar{\da}_{||}''(\vartheta_r)$. We have
	\begin{equation*}
	\begin{split}
	\bar{\da}_{||}''(r)& = -\EE\fint_{\S}\fint_{\TT_{\lambda}^2} n^i n^j n^k n^m (-\Delta)^{-\gamma}\partial_{x_k} u^i(x)(-\Delta)^{-\gamma}\partial_{x_m} u^j(x+rn) \dx\dee n\\
	& = -\EE\fint_{\S}\fint_{\TT_{\lambda}^2} n^i n^j n^k n^m (-\Delta)^{-\gamma}\partial_{x_k} u^i(x)(-\Delta)^{-\gamma}\partial_{x_m} u^j(x) \dx\dee n\\
	&\quad  -\EE\fint_{\S}\fint_{\TT_{\lambda}^2} n^i n^j n^k n^m (-\Delta)^{-\gamma}\partial_{x_k} u^i(x)(-\Delta)^{-\gamma}\partial_{x_m}\delta_{rn} u^j(x) \dx\dee n\\
	&=: I_1 + I_2.
	\end{split}
	\end{equation*}
	The first term in the last equality is, using~\eqref{eq:trigo2},
	\begin{equation*}
	I_1 = -\frac{1}{8}\EE\norm{(-\Delta)^{-\gamma}\omega}_\lambda^2,
	\end{equation*}
	whereas the second term in the last equality can be bounded by
	\begin{equation*}
	I_2 \leq \left(\EE\norm{(-\Delta)^{-\gamma}\omega}_\lambda^2\right)^{1/2}\left(\sup_{|h|\leq r} \EE\norm{(-\Delta)^{-\gamma}\delta_h \omega}_\lambda^2\right)^{1/2}.
	\end{equation*}
	Combining, we get
	\begin{equation}
	\label{eq:notsobad}
	\begin{aligned}
	\frac{4\alpha}{\ell^6} \int_0^\ell r^3 \bar{\da}_{||}(r)\dee r & = \frac{\damp}{2\ell^2}\EE\norm{(-\Delta)^{-\gamma} u }^2_\lambda- \frac{\damp}{24} \EE\norm{(-\Delta)^{-\gamma}\omega}^2_\lambda\\
	& \quad + \mathcal{O}\left(\left(\eta\damp\sup_{|h|\leq \ell}\EE\norm{(-\Delta)^{-\gamma}\delta_{h}\omega}^2_\lambda\right)^{1/2}\right).
	\end{aligned}
	\end{equation}
	
\smallskip

\noindent \textbf{Step 4.} We combine~\eqref{eq:Hell2},~\eqref{eq:easyterm},~\eqref{eq:notsobad} and~\eqref{eq:InputExLind3} with~\eqref{eq:NecBal3} to obtain
	\begin{equation*}
	\begin{split}
	\frac{\bar{D}_{||}(\ell)}{\ell^3}&= \frac{1}{2\ell}\left(\nu \EE\norm{\omega}^2_\lambda+\damp\EE\norm{(-\Delta)^{-\gamma} u }^2_\lambda-\eps\right)+\frac{1}{24}\left(\eta -  \nu\EE\norm{\Grad\omega}^2_\lambda - \damp \EE\norm{(-\Delta)^{-\gamma}\omega}^2_\lambda \right)\\
	&\phantom{=}+ \mathcal{O}\left(\left(\eta\nu\sup_{|h|\leq \ell}\EE\norm{\Grad\delta_h\omega}^2_\lambda+\eta\damp\sup_{|h|\leq \ell}\EE\norm{(-\Delta)^{-\gamma}\delta_{h}\omega}^2_\lambda\right)^{\frac{1}{2}}\right) + o_{\ell\to 0}(1),
	\end{split}
	\end{equation*}
	which goes to zero as $\ell\to 0$ by the assumptions.% At least after I fix the tensor calculus.

\subsection{Proof of~\eqref{eq:necessary4}}
	From Lemma~\ref{lem:S0-Sphere}, we have
	\begin{multline*}
	\frac{\bar{D}(\ell)}{\ell} = 2\nu\E\fint_{|y|\leq \ell}\!\! \fint_{\TT^2_\lambda}\Grad u(x+y)\cdot\Grad u(x)\dx\dy\\
	+ 2\damp \E\fint_{|y|\leq \ell}\!\!\fint_{\TT^2_\lambda} (-\Delta)^{-\gamma}u(x+y) \cdot(-\Delta)^{-\gamma}u(x)\dx\dy - 2\fint_{|y|\leq \ell} \tr a(y)\dy. 
	\end{multline*}
	For the first term, we use the assumption in \eqref{def:EQdiss} and apply Lemma~\ref{lem:LrgCan1} with $f^\lambda = \sqrt{\nu}\Grad u$ to deduce that this term goes to zero as $\ell\to\infty$. Similarly, the second term goes to zero as $\ell\to 0$ by the assumption in \eqref{def:EQdamp} and Lemma~\ref{lem:LrgCan1} with $f^\lambda = \sqrt{\damp}(-\Delta)^{-\gamma}u$. The last term also vanishes using Assumption~\ref{cond:g} and Lemma \ref{lem:LrgCan1}. 

\subsection{Proof of~\eqref{eq:necessary5}}
	From Lemma~\ref{lem:khmparallel}, we have
	\begin{align}
	\frac{\bar{D}_{||}(\ell)}{\ell}= - \frac{4\nu \bar{\Gamma}_{||}'(\ell)}{\ell} +\frac{2}{\ell^4}\int_0^\ell r^2 \bar{D}(r)\dr + \frac{4\alpha}{\ell^4} \int_0^\ell r^3 \bar{\da}_{||}(r)\dee r - \frac{4}{\ell^4}\int_0^\ell r^3 \bar{a}_{||}(r)\dr. \label{eq:NecBalLast}
\end{align}
	We observe that we can write the first term equivalently as
	\begin{equation*}
	\begin{split}
	- \frac{4\nu \bar{\Gamma}_{||}'(\ell)}{\ell} &=-\frac{2\nu}{\ell^2}\sum_{i,j}\fint_{\{|y|\leq\ell\}}y^i y^j\,\Delta \Gamma^{ij}(y) \dy\\
	& = \frac{2\nu}{\ell^2}\sum_{i,j,k}\fint_{\{|y|\leq\ell\}} y^i y^j\, \E\fint_{\TT^2_\lambda}\partial_{x_k} u^i(x) \partial_{x_k} u^j(x+y) \dx\dy.
	\end{split}
	\end{equation*}
	By the assumption in \eqref{def:EQdiss}, we can apply Lemma~\ref{lem:LrgCan2} for $f^\lambda =\sqrt{\nu}\partial_{x_k} u$, $k=1,2$, to see that this term vanishes as $\ell\to \infty$.
	
Next, the second term in \eqref{eq:NecBalLast} vanishes by~\eqref{eq:necessary4}.
The third and the fourth terms in \eqref{eq:NecBalLast} go to zero using \eqref{def:EQdamp}, Assumption~\ref{cond:g} (respectively) and Lemma~\ref{lem:LrgCan2}, similar to the proof of~\eqref{eq:necessary4}.

\section{Isolated cascades}

\subsection{Isolated direct cascade}
In this section we prove Theorem~\ref{thm:NonUniIsoDirect}. The rigorous formulation of the scaling laws therein is the same as that in Theorem \ref{thm:direct}.
\begin{proof}
Consider first the proof of \eqref{eq:YagIso}, which is rigorously formulated as in \eqref{eq:Yaglom1} (with $\eta$ replaced with $\eta^\ast_\nu$ and no limit in $\alpha$).
The proof proceeds as in Section \ref{sec:PrfYag1} except for the estimate on the contribution of the damping term in \eqref{eq:YagBal}, which we prove satisfies:
\begin{equation}
\lim_{\ell \to 0}\sup_{\nu \in (0,1)} \abs{2\damp \fint_{|y|\leq\ell} \mathfrak{G}(y)\dy - (2\eta - 2\eta^\ast_\nu) } = 0.
\end{equation}
We expand this term in the limit as $\ell \to 0$ as the following:
\begin{align*}
2\damp\fint_{|y|\leq\ell} \mathfrak{G}(y)\dy = 2\damp\fint_{|y|\leq\ell} \left(\mathfrak{G}(y) - \mathfrak{G}(0)\right)\dy + 2\damp \EE \norm{(-\Delta)^{-\gamma} \omega }^2_\lambda.
\end{align*}
By definition, \eqref{def:HnegPreCom} shows that the first term vanishes in the desired manner, and the latter term is $2\eta - 2\eta^\ast_\nu$ also by definition.
This completes the proof of \eqref{eq:YagIso}. 

Consider next the proof of \eqref{eq:Lind1Iso}, which is rigorously formulated as in \eqref{eq:Lindborg1} (with $\eta$ replaced with $\eta^\ast_\nu$ and no limit in $\alpha$).
As in Section \ref{sub:Lindborg1}, we start from \eqref{eq:Lindborg1bal}. The only change is the treatment of the damping term.
Beginning from \eqref{eq:DampLind1} and differentiating, we see
\begin{align}
\bar{\da}''(\ell) = -2\sum_{i,j,k} \EE\fint_{\S}\fint_{\mathbb T_\lambda^2} n^i n^k  \partial_{x_i} (-\Delta)^{-\gamma} u^j(x+\ell n) (-\Delta)^{-\gamma} \partial_{x_k} u^j(x) \dx \dee n.
\end{align}
%Observe that by $\fint_{\S^1} n^i n^k \dee n = \frac$
By \eqref{eq:trigo}, we have  
\begin{align}
\bar{\da}''(0) = -\sum_{k,j} \EE\fint_{\mathbb T_\lambda^2}  \partial_{x_k} (-\Delta)^{-\gamma} u^j(x) (-\Delta)^{-\gamma} \partial_{x_k} u^j(x) \dx  = -\E\norm{(-\Delta)^{-\gamma}\omega}_{\lambda}^2.
\end{align}
By Taylor's theorem,
\begin{align}
\bar{\da}(r)-\bar{\da}(0) = \frac{1}{2} \bar{\da}''(r_\ast) r^2 = \frac{1}{2} \bar{\da}''(0) r^2 + \frac{1}{2} \left(\bar{\da}''(r_\ast)  - \bar{\da}''(0) \right) r^2,
\end{align}
It follows by \eqref{def:HnegPreCom} that 
\begin{align}
\lim_{\ell \to 0}\sup_{\nu \in (0,1)}\abs{\frac{4\damp}{\ell^4} \int_0^\ell r \left( \bar{\da}(r)-\bar{\da}(0) \right) \dee r + \frac{\alpha}{2}\E\norm{(-\Delta)^{-\gamma} \omega}_{\lambda}^2} = 0. 
\end{align}
After combining this observation with the rest of the arguments in Section \ref{sub:Lindborg1} following \eqref{eq:Lindborg1bal}, this completes the proof of \eqref{eq:Lind1Iso}. The proof of~\eqref{eq:Lind2Iso} (again, rigorously formulated as \eqref{eq:Lindborg3} without $\alpha$ and $\eta$ replaced with $\eta^\ast_\nu$) now follows from \eqref{eq:Lind1Iso} in a manner analogous to the direct cascade in Section \ref{sub:Lindborg1}. The argument is omitted for the sake of brevity as it is essentially the same. 
\end{proof}

\subsection{Isolated inverse cascade}
In this section we prove Theorem \ref{thm:NonUniIsoInv}. The rigorous formulation of the scaling laws therein is the same as that in Theorem \ref{thm:inverse}, and hence we will refer to statements therein. 
\begin{proof}
As in the proof of Theorem \ref{thm:inverse} in Section \ref{sec:Inverse}, we begin with the proof of \eqref{eq:34invCas1Iso} (rigorously formulated analogously to \eqref{eq:34invCas1} but with no $\nu$ limit and $\eps$ replaced with $\eps^\ast_\alpha$).
This begins with the balance \eqref{eq:Lindborg1bal2}.
The term involving $\bar{a}$ is treated as in Section \ref{sec:Inverse}. 
Note that the dissipation term due to viscosity can be written as
\begin{align}
-\frac{4\nu\bar{\Gamma}^\prime(\ell)}{\ell} = 2 \nu\sum_{i,j} \E\fint_{\abs{y} \leq \ell} \fint_{\TT^2_\lambda}\partial_{x_i}u^j(x+y) \partial_{x_i}u^j(x) \dx \dy. 
\end{align}
The required vanishing of this term then follows from the assumption in \eqref{ineq:lowfreqcomp} together with Lemma~\ref{lem:LrgCan1}.

To estimate the term associated with the large-scale damping, we write (as in Section \ref{sec:Inverse}),
\begin{align*}
\frac{4\damp}{\ell^2} \int_0^\ell r\bar{\da}(r)\dr= \frac{4\damp}{\ell^2} \int_0^\ell r(\bar{\da}(r)-\bar{\da}(0))dr+2 \damp \EE \norm{(-\Delta)^{-\gamma} u}_{\lambda}^2. 
\end{align*}
The first term is treated as in Section \ref{sec:Inverse}; indeed: 
\begin{align}
\abs{\frac{4\damp}{\ell^2} \int_0^\ell r(\bar{\da}(r)-\bar{\da}(0)) \dr} \lesssim \alpha \ell \left(\EE \norm{\grad (-\Delta)^{-\gamma} u}_{\lambda}^2\right)^{1/2} \left(\EE \norm{(-\Delta)^{-\gamma} u}_{\lambda}^2 \right)^{1/2}. 
\end{align}
Note that, since $\gamma \geq 0$, then for $\theta = \frac{\gamma}{1+\gamma}, $
\begin{align}
\EE \norm{\grad (-\Delta)^{-\gamma} u}_\lambda^2 \leq \left(\EE \norm{(-\Delta)^{-\gamma} u}_{\lambda}^{2}\right)^{\theta} \left(\EE \norm{\grad u}_{\lambda}^{2}\right)^{1-\theta},
\end{align}
and hence, 
\begin{align*}
\abs{\frac{4\damp}{\ell^2} \int_0^\ell r(\bar{\da}(r)-\bar{\da}(0))dr} \lesssim \ell \alpha^{1- \frac{1 + \theta}{2}} \left(\EE \norm{\grad u}_{\lambda}^{2}\right)^{\frac{1-\theta}{2}} \left(\alpha \EE \norm{(-\Delta)^{-\gamma} u}_{\lambda}^{2}\right)^{\frac{1 + \theta}{2}}  \lesssim \ell \alpha^{1- \frac{1 + \theta}{2}}. 
\end{align*}
Since $\frac{1+\theta}{2} < 1$, it follows that we can choose $\ell_\alpha = o(\alpha^{-1 + \frac{1+\theta}{2}})$ for this term to vanish.
The proof is then complete by the definition of $\eps^\ast_\alpha$. 
\end{proof}

\appendix
\section{Isotropic sixth order tensors}\label{sec:tensorshit}
We need the following lemma in Section \ref{sec:Nec} in order to provide high order expansions in the energy balance. 

\begin{lemma}[Expression for an isotropic sixth order tensor]
	We have
	\begin{align*}
	\fint_{\S}n^i n^j n^k n^m n^p n^q \,\dee n= \frac{1}{48}\big(& \delta_{i,j}\delta_{k,m}\delta_{p,q} + \delta_{i,j}\delta_{k,p}\delta_{m,q}+ \delta_{i,j}\delta_{k,q}\delta_{p,m} +	\delta_{i,k}\delta_{j,m}\delta_{p,q} + \delta_{i,k}\delta_{j,p}\delta_{m,q}\\
	&+ \delta_{i,k}\delta_{j,q}\delta_{p,m} +\delta_{i,m}\delta_{k,j}\delta_{p,q} + \delta_{i,m}\delta_{k,p}\delta_{j,q}+\delta_{i,m}\delta_{k,q}\delta_{p,j}+\delta_{i,p}\delta_{k,m}\delta_{j,q}\\
	&+ \delta_{i,p}\delta_{k,j}\delta_{m,q}+ \delta_{i,p}\delta_{k,q}\delta_{j,m}+
	\delta_{i,q}\delta_{k,m}\delta_{p,j} + \delta_{i,q}\delta_{k,p}\delta_{m,j}+ \delta_{i,q}\delta_{k,j}\delta_{p,m}	\big).
	\end{align*}
\end{lemma}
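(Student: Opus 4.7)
The plan is to exploit full permutation symmetry together with rotational invariance. Set
$$T_{ijkmpq} := \fint_{\mathbb{S}} n^i n^j n^k n^m n^p n^q \, \dee n.$$
Because the uniform measure on $\mathbb{S}^1$ is $O(2)$-invariant, $T$ is an isotropic tensor of rank six on $\mathbb{R}^2$; by construction it is also fully symmetric under permutations of its six indices.

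\textbf{Step 1 (structure).} Invoke the classical structure theorem for even-rank isotropic tensors (Weyl's theorem on $O(d)$-invariants): any such tensor is a linear combination of products of Kronecker deltas indexed by pairings of the indices. For six indices there are $(2\cdot 3-1)!! = 15$ such pairings, matching exactly the list in the lemma. Full symmetry of $T$ under permutations then forces the coefficient of each pairing to be one and the same constant $c$, giving
$$T_{ijkmpq} = c \sum_{\pi}\prod_{\{a,b\}\in \pi}\delta_{ab},$$
where $\pi$ runs over pairings of $\{i,j,k,m,p,q\}$.

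\textbf{Step 2 (the constant).} To determine $c$, I contract both sides with $\delta^{ij}\delta^{km}\delta^{pq}$ and sum. The left-hand side collapses to $\fint_{\mathbb{S}}|n|^6\,\dee n = 1$. On the right-hand side, each pairing $\pi$ contributes $d^{\,c(\pi)}$, where $c(\pi)$ is the number of cycles of the multigraph obtained by superimposing $\pi$ with the reference pairing $\{(i,j),(k,m),(p,q)\}$. Organizing the 15 pairings by how many pairs they share with the reference yields: $1$ pairing agreeing in all three pairs (giving three $2$-cycles, hence $d^3$); $6$ pairings agreeing in exactly one pair (giving one $2$-cycle and one $4$-cycle, hence $d^2$ each); and the remaining $8$ pairings sharing no pair with the reference, each producing a single $6$-cycle (hence $d$). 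The total is $d^3 + 6d^2 + 8d$, which at $d=2$ equals $8+24+16 = 48$, so $c = 1/48$.

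\textbf{Main obstacle.} The only delicate point is the combinatorial bookkeeping in Step 2. Everything else is structural and essentially automatic from isotropy and symmetry; the cycle-count picture handles the enumeration cleanly and avoids any direct integration of monomials $\cos^a\theta\sin^b\theta$.
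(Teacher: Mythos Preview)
Your proof is correct and follows essentially the same line as the paper's: both invoke the structure theorem for isotropic tensors to reduce to a single constant multiplying the symmetric sum of 15 delta-triples, then pin down that constant. The only difference is in Step~2: the paper sets $i=j=k=m=p=q$ and computes $\fint_{\S}(n^1)^6\,\dee n=\tfrac{1}{2\pi}\int_0^{2\pi}\sin^6\theta\,\dee\theta=\tfrac{5}{16}$, giving $15\kappa=\tfrac{5}{16}$, whereas you contract with $\delta^{ij}\delta^{km}\delta^{pq}$ and do the cycle count to get $48c=1$; both yield $c=\tfrac{1}{48}$.
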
	
\begin{proof}
	The left hand side is an isotropic sixth order tensor. From~\cite{Kearsley1975}, we know that it is a linear combination of 15 fundamental isotropic tensors of the form $ \delta_{i,j}\delta_{k,m}\delta_{p,q}$ and all permutations of $i,j,k,m,p,q$ in this expression. Since $i,j,k,m,p,q$ are interchangeable in $\fint_{\S}n^i n^j n^k n^m n^p n^q \,\dee n$, they must all occur with the same factor, and therefore
	\begin{align*}
	\fint_{\S}n^i n^j n^k n^m n^p n^q \,\dee n= \kappa\big(& \delta_{i,j}\delta_{k,m}\delta_{p,q} + \delta_{i,j}\delta_{k,p}\delta_{m,q}+ \delta_{i,j}\delta_{k,q}\delta_{p,m} +	\delta_{i,k}\delta_{j,m}\delta_{p,q} + \delta_{i,k}\delta_{j,p}\delta_{m,q}\\
	&+ \delta_{i,k}\delta_{j,q}\delta_{p,m} +\delta_{i,m}\delta_{k,j}\delta_{p,q} + \delta_{i,m}\delta_{k,p}\delta_{j,q}+\delta_{i,m}\delta_{k,q}\delta_{p,j}+\delta_{i,p}\delta_{k,m}\delta_{j,q}\\
	&+ \delta_{i,p}\delta_{k,j}\delta_{m,q}+ \delta_{i,p}\delta_{k,q}\delta_{j,m}+
	\delta_{i,q}\delta_{k,m}\delta_{p,j} + \delta_{i,q}\delta_{k,p}\delta_{m,j}+ \delta_{i,q}\delta_{k,j}\delta_{p,m}	\big),
	\end{align*}
	for some constant $\kappa\in\R$. It remains to compute $\kappa$. 
	We have for $i=j=k=m=p=q$
	\begin{equation*}
	\fint_{\S} (n^i)^6 \dee n = \frac{1}{2\pi}\int_0^{2\pi}\sin^6(\theta)\dee\theta = \frac{5}{16}.
	\end{equation*}
	In this case, none of the 15 terms vanishes and therefore $15\kappa = \frac{5}{16}$ and hence $\kappa=\frac{1}{48}$.
	
\end{proof}

\section*{Funding and conflict of interest}

\subsection*{Funding} 
J.B. was supported by NSF CAREER grant DMS-1552826 and NSF RNMS 1107444 (Ki-Net).
S.P-S.  was supported by NSF DMS-1803481.

\subsection*{Conflict of Interest}
The authors declare that they have no conflict of interest.

\bibliographystyle{abbrv}
\bibliography{bibliography}

\end{document}